\theoremstyle{plain}
\newtheorem{theorem}{Theorem}[section]
\newtheorem*{theorem-nn}{Theorem}
\newtheorem{lemma}[theorem]{Lemma}
\newtheorem{proposition}[theorem]{Proposition}
\newtheorem{corollary}[theorem]{Corollary}
\theoremstyle{definition}
\newtheorem*{definition}{Definition}
\newtheorem{example}[theorem]{Example}
\newtheorem{remark}[theorem]{Remark}
\newcommand{\bs}{\mathbf{s}}\newcommand{\bt}{\mathbf{t}}
\newcommand{\ba}{\mathbf{a}}\newcommand{\bb}{\mathbf{b}}
\newcommand{\bx}{\mathbf{x}}\newcommand{\by}{\mathbf{y}}
\newcommand{\Gs}{G_\mathbf{s}}\newcommand{\Gt}{G_\mathbf{t}}
\newcommand{\Gst}{G_{\mathbf{s},\mathbf{t}}}
\newcommand{\Hst}{H_{\mathbf{s},\mathbf{t}}}
\newcommand{\opi}{\overline{\pi}}
\newcommand{\Spl}{\mathrm{Spl}}\newcommand{\Gal}{\mathrm{Gal}}
\newcommand{\D}{\mathcal{D}}\newcommand{\cS}{\mathcal{S}}
\newcommand{\V}{\mathcal{V}}\newcommand{\A}{\mathcal{A}}
\newcommand{\C}{\mathcal{C}}\newcommand{\R}{\mathcal{R}}
\newcommand{\RP}{\mathcal{RP}}
\numberwithin{equation}{section}
\title[Field intersection problem of quartic generic polynomials]
{On the field intersection problem of quartic generic polynomials 
via formal Tschirnhausen transformation}
\author{Akinari Hoshi and Katsuya Miyake}
\thanks{This work was partially supported by Grant-in-Aid for Scientific 
Research (C) 19540057 of Japan Society for the Promotion of Science and 
Rikkyo University Special Fund for Research.}
\subjclass[2000]{Primary 11R16, 11R20, 12E25, 12F10, 12F12.}
\keywords{Generic polynomial, Tschirnhausen transformation, field isomorphism problem, 
field intersection problem, multi-resolvent polynomial}
\begin{document}
\maketitle

\begin{abstract}
Let $k$ be a field of characteristic $\neq 2$. 
We give an answer to the field intersection problem of quartic generic polynomials over $k$ 
via formal Tschirnhausen transformation and multi-resolvent polynomials. 
\end{abstract}

\section{Introduction}\label{seIntro}

Let $k$ be a field of char $k\neq 2$ and $k(\bs)$ the rational function field over $k$ 
with $n$ indeterminates $\bs=(s_1,\ldots,s_n)$. 
Let $G$ be a finite group. 
A polynomial $f_\bs(X)\in k(\bs)[X]$ is called $k$-generic for $G$ if 
the Galois group of $f_\bs(X)$ over $k(\bs)$ is isomorphic to $G$ and every 
$G$-Galois extension $L/M$ over an arbitrary infinite field $M\supset k$ can be obtained as 
$L=\Spl_M f_\ba(X)$, the splitting field of $f_\ba(X)$ over $M$, for some 
$\ba=(a_1,\ldots,a_n)\in M^n$ (cf. \cite{DeM83}, \cite{Kem01}, \cite{JLY02}). 
Note that we always take an infinite field $M$ as a base field $M$, $M\supset k$, of a 
$G$-extension $L/M$. 
Examples of $k$-generic polynomials for $G$ are known for various pairs of $(k,G)$ 
(for example, see \cite{Kem94}, \cite{KM00}, \cite{JLY02}, \cite{Rik04}). 

Let $f_\bs^G(X)\in k(\bs)[X]$ be a $k$-generic polynomial for $G$. 
Kemper \cite{Kem01} showed that for a subgroup $H$ of $G$ every $H$-Galois extension over 
an infinite field $M\supset k$ is also given by a specialization of $f_\bs^G(X)$ 
as in the similar manner. 
The aim of this paper is to study the field intersection problem $\mathbf{Int}(f_\bs^G/M)$ 
of $f_\bs^G(X)$ over $M$: 
\begin{center}
$\mathbf{Int}(f_\bs^G/M)$ : 
for a field $M\supset k$ and $\ba,{\ba'}\in M^n$, determine the\\
\hspace*{1.7cm} intersection of $\Spl_M f_\ba^G(X)$ and $\Spl_M f_{\ba'}^G(X)$. 
\end{center}

It would be desired to give an answer to the problem within the base field $M$ by using 
the data $\ba,{\ba'}\in M^n$. 
As a special case, this problem includes the field isomorphism problem 
{\bf $\mathbf{Isom}(f_\bs^G/M)$} of $f_\bs^G(X)$ over $M$, i.e., 
for $\ba,{\ba'}\in M^n$ whether $\Spl_M f_\ba^G(X)$ and $\Spl_M f_{\ba'}^G(X)$ 
are isomorphic over $M$ or not. 
Since a $k$-generic polynomial covers all $H$-Galois extensions ($H\leq G$) over $M\supset k$ 
by specializing parameters, the problem {\bf $\mathbf{Isom}(f_\bs^G/M)$} arises naturally. 
Moreover we consider the following problem: 
\begin{center}
{\bf $\mathbf{Isom}^\infty(f_\bs^G/M)$} : for a given $\ba\in M^n$, are there 
infinitely many\\
\hspace*{3.4cm} ${\ba'}\in M^n$ such that $\Spl_M f_\ba^G(X)=\Spl_M f_{\ba'}^G(X)$\,? 
\end{center}

Let $\cS_n$ (resp. $\A_n$, $\D_n$, $\C_n$) be the symmetric (resp. the alternating, 
the dihedral, the cyclic) group of degree $n$ and $\V_4$ the Klein four group 
($\V_4\cong\C_2\times\C_2)$.  
In \cite{HM07} and \cite{HM}, we gave answers to $\mathbf{Int}(f_\bs^G/M)$ and to 
$\mathbf{Isom^\infty}(f_\bs^G/M)$ for cubic $k$-generic polynomials 
$f_s^{\C_3}(X)=X^3-sX^2-(s+3)X-1$ and $f_s^{\cS_3}(X)=X^3+sX+s$. 
In the present paper we investigate the problems $\mathbf{Int}(f_\bs^G/M)$ and 
$\mathbf{Isom^\infty}(f_\bs^G/M)$ for quartic generic polynomials $f_\bs^G(X)$ via formal 
Tschirnhausen transformation and multi-resolvent polynomials. 

For $G=\cS_4$, $\D_4$, $\C_4$, $\V_4$, we take the following $k$-generic polynomials 
\begin{align*}
f_{s,t}^{\cS_4}(X)&:=X^4+sX^2+tX+t\, \in k(s,t)[X],\\
f_{s,t}^{\D_4}(X)&:=X^4+sX^2+t\, \in k(s,t)[X],\\
f_{s,u}^{\C_4}(X)&:=X^4+sX^2+\frac{s^2}{u^2+4}\, \in k(s,u)[X],\\
f_{s,v}^{\V_4}(X)&:=X^4+sX^2+v^2\, \in k(s,v)[X],
\end{align*}
respectively, with two parameters (the least possible number of parameters; cf. \cite{BR97}, 
\cite[Chapter 8]{JLY02}). 

In Section \ref{sePre}, we review some known results about resolvent polynomials and
formal Tschirnhausen transformation. 

In Section \ref{seS4A4}, we give an answer to $\mathbf{Int}(f_{\bs}^{\cS_4}/M)$ via 
multi-resolvent polynomial (Theorem \ref{thS4A4}). 
In Subsection \ref{seIsoS4}, we give a more explicit answer to $\mathbf{Isom}(f_{\bs}^{\cS_4}/M)$ 
by using formal Tschirnhausen transformation in Theorem \ref{thS4}. 
A proof of Theorem \ref{thS4} will be given in Subsection \ref{seProof}. 
A consequence of Theorem \ref{thS4} is the following theorem: 
\begin{theorem-nn}[Corollary \ref{cor2}, an answer to $\mathbf{Isom}^\infty(f_{\bs}^{\cS_4}/M)$]
Let $M\supset k$ be an infinite field. 
For $\ba=(a,b)\in M^2$, we assume that $f_\ba^{\cS_4}(X)$ is separable over $M$. 
Then there exist infinitely many $\ba'=(a',b')\in M^2$ such that 
$\Spl_M f_\ba^{\cS_4}(X)=\Spl_M f_{\ba'}^{\cS_4}(X)$. 
\end{theorem-nn}
In Section \ref{seD4}, we treat the problems $\mathbf{Int}(f_\bs^{\D_4}/M)$, 
$\mathbf{Isom}(f_\bs^{\D_4}/M)$ and $\mathbf{Isom^\infty}(f_\bs^{\D_4}/M)$. 
In the case of $\D_4$, $\mathbf{Isom^\infty}(f_\bs^{\D_4}/M)$
has a trivial solution because $\Spl_M f_{a,b}^{\D_4}(X)=\Spl_M f_{ac^2,bc^4}^{\D_4}(X)$ 
for arbitrary $c\in M\backslash\{0\}$.
Thus we consider the problem $\mathbf{Isom^\infty}(f_\bs^{\D_4}/M)$ for $\ba=(a,b)$ and 
$\ba'=(a',b')$ under the condition $a^2b'-{a'}^2b\neq 0$ or $b'/b\neq c^4$ for any $c\in M$. 
\begin{theorem-nn}[Theorem \ref{thD4Hil}]
Let $M\supset k$ be a Hilbertian field. 
For $\ba=(a,b)\in M^2$, we assume that $f_\ba^{\D_4}(X)$ is separable over $M$. 
Then there exist infinitely many $\ba'=(a',b')\in M^2$ 
which satisfy that $b'/b$ is not a fourth power in $M$ and $\Spl_M f_\ba^{\D_4}(X)=\Spl_M f_{\ba'}^{\D_4}(X)$. 
\end{theorem-nn}
In Section \ref{seC4V4}, we deal with the cases of $\C_4$ and of $\V_4$ 
which are treated by suitably specializing the case of $\D_4$. 
We also treat reducible cases in Section \ref{seRed}. 

Most of results in the present paper are given with explicit formulas 
which are intended to be applied elsewhere, and we also give some numerical examples 
by using our explicit formulas. 
The calculations of this paper were carried out with Mathematica \cite{Wol03}.

\section{Preliminaries}\label{sePre}

In this section we review some basic facts, and a result of \cite{HM}. \\

\subsection{Resolvent polynomial}\label{subseResolv}
~\\

One of the fundamental tools in the computational aspects of Galois theory 
is the resolvent polynomials (cf. the text books \cite{Coh93}, \cite{Ade01}). 
Several kinds of methods to compute a resolvent polynomial have been developed by 
many mathematicians 
(see, for example, \cite{Sta73}, \cite{Gir83}, \cite{SM85}, \cite{Yok97}, \cite{MM97}, 
\cite{AV00}, \cite{GK00} and the references therein). 

Let $M\supset k$ be an infinite field and $\overline{M}$ a fixed algebraic closure of $M$.
Let $f(X):=\prod_{i=1}^m(X-\alpha_i) \in M[X]$ be a separable polynomial of degree $m$ with 
fixed ordering roots $\alpha_1,\ldots,\alpha_m\in \overline{M}$. 
The information of the splitting field $\Spl_M f(X)$ of $f(X)$ over $M$ 
and their Galois group is obtained by using resolvent polynomials. 

Let $k[\bx]:=k[x_1,\ldots,x_m]$ be the polynomial ring over $k$ with indeterminates 
$x_1,\ldots,x_m$. 
Put $R:=k[\bx, 1/\Delta_\bx]$, where $\Delta_\bx:=\prod_{1\leq i<j\leq m}(x_j-x_i)$. 
We take a surjective evaluation homomorphism 
\[
\omega_f : R \longrightarrow k(\alpha_1,\ldots,\alpha_m),\quad 
\Theta(x_1,\ldots,x_m)\longmapsto \Theta(\alpha_1,\ldots,\alpha_m)
\]
for $\Theta \in R$.
We note that $\omega_f(\Delta_\bx)\neq 0$ from the assumption that $f(X)$ is separable over $M$. 
The kernel of the map $\omega_f$ is the ideal 
\[
I_f=\mathrm{ker}(\omega_f)=\{\Theta(x_1,\ldots,x_m)\in R 
\mid \Theta(\alpha_1,\ldots,\alpha_m)=0\}. 
\]
For $\pi\in \cS_m$, we extend the action of $\pi$ on $m$ letters $\{1,\ldots,m\}$ 
to $R$ by 
\[
\pi(\Theta(x_1,\ldots,x_m)):=\Theta(x_{\pi(1)},\ldots,x_{\pi(m)}).
\]
We define the Galois group of a polynomial $f(X)\in M[X]$ over $M$ by 
\[
\Gal(f/M):=\{\pi\in \cS_m \mid \pi(I_f)\subseteq I_f\}.
\]
We write $\Gal(f):=\Gal(f/M)$ for simplicity. 
The Galois group of the splitting field $\Spl_M f(X)$ of a polynomial $f(X)$ over $M$ 
is isomorphic to $\Gal(f)$.  If we take another ordering of roots 
$\alpha_{\pi(1)},\ldots,\alpha_{\pi(m)}$ of $f(X)$ with some $\pi\in \cS_m$, 
the corresponding realization of $\Gal(f)$ is the conjugate of the original one given 
by $\pi$ in $\cS_m$. 
Hence, for arbitrary ordering of the roots of $f(X)$, $\Gal(f)$ 
is determined up to conjugation in $\cS_m$. 

\begin{definition}
For $H\leq G\leq \cS_m$, an element $\Theta\in R$ is called a $G$-primitive $H$-invariant if 
$H=\mathrm{Stab}_G(\Theta)$ $:=$ $\{\pi\in G\ |\ \pi(\Theta)=\Theta\}$. 
For a $G$-primitive $H$-invariant $\Theta$, the polynomial 
\[
\RP_{\Theta,G}(X):=\prod_{\opi\in G/H}(X-\pi(\Theta))\in R^G[X]
\]
is called the {\it formal} $G$-relative $H$-invariant resolvent by $\Theta$, and a polynomial
\[
\RP_{\Theta,G,f}(X):=\prod_{\opi\in G/H}\bigl(X-\omega_f(\pi(\Theta))\bigr)
\]
is called the $G$-relative $H$-invariant resolvent of $f$ by $\Theta$. 
\end{definition}

The following is fundamental in the theory of resolvent polynomials (cf. \cite[p.95]{Ade01}). 

\begin{theorem}\label{thfun}
For $H\leq G\leq \cS_m$, let $\Theta$ be a $G$-primitive $H$-invariant. 
Assume that $\Gal(f)\leq G$. 
Suppose that $\RP_{\Theta,G,f}(X)$ is decomposed into a product of powers of distinct 
irreducible polynomials as $\RP_{\Theta,G,f}(X)=\prod_{i=1}^l h_i^{e_i}(X)$ in $M[X]$. 
Then we have a bijection 
\begin{align*}
\Gal(f)\backslash G/H\quad &\longrightarrow \quad \{h_1^{e_1}(X),\ldots,h_l^{e_l}(X)\},\\
\Gal(f)\, \pi\, H\quad &\longmapsto\quad h_\pi(X)
=\prod_{\tau H\subseteq \Gal(f)\,\pi\,H}\bigl(X-\omega_{f}(\tau(\Theta))\bigr)
\end{align*}
where the product is taken over the left cosets $\tau H$ of $H$ in $G$ contained in 
$\Gal(f)\, \pi\, H$, that is, over $\tau=\pi_\sigma \pi$ where $\pi_\sigma$ runs 
through a system of representatives of the left cosets of $\Gal(f) \cap \pi H\pi^{-1}$ in $\Gal(f)$, and each 
$h_\pi(X)$ is irreducible or a power of an irreducible polynomial with $\mathrm{deg}(h_\pi(X))$ 
$=$ $|\Gal(f)\, \pi\, H|/|H|$ $=$ $|\Gal(f)|/|\Gal(f)\cap \pi H\pi^{-1}|$. 
\end{theorem}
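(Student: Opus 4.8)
The plan is to transport everything to the Galois theory of $L:=\Spl_M f(X)$. Identify $\Gal(f)\leq\cS_m$ with $\Gal(L/M)$ by sending $\sigma$ to the automorphism $\widetilde\sigma$ determined by $\widetilde\sigma(\alpha_i)=\alpha_{\sigma(i)}$; the single fact I shall use repeatedly is the substitution rule $\widetilde\sigma(\omega_f(\Theta))=\omega_f(\sigma(\Theta))$, valid for all $\Theta\in R$ and $\sigma\in\Gal(f)$, which is immediate from $(\sigma\Theta)(x_1,\ldots,x_m)=\Theta(x_{\sigma(1)},\ldots,x_{\sigma(m)})$. Since the roots of $\RP_{\Theta,G,f}(X)$ are precisely the elements $\omega_f(\pi\Theta)$, $\overline\pi\in G/H$, this rule shows that $\Gal(L/M)$ permutes these roots through the natural left action of $\Gal(f)$ on the coset space $G/H$; the orbits of that action are exactly the double cosets $\Gal(f)\backslash G/H$.

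First I would establish the factorization $\RP_{\Theta,G,f}(X)=\prod_{\Gal(f)\pi H}h_\pi(X)$ over $M$, the product running over double cosets. Fix $D=\Gal(f)\pi H$. As $\Theta$ is a $G$-primitive $H$-invariant one has $\mathrm{Stab}_{\Gal(f)}(\pi\Theta)=\Gal(f)\cap\pi H\pi^{-1}$, so the formal factor $H_\pi(X):=\prod_{\tau H\subseteq D}(X-\tau\Theta)$ is $\Gal(f)$-stable, hence lies in $R^{\Gal(f)}[X]$, and it is the minimal polynomial of $\pi\Theta$ over $\mathrm{Frac}(R^{\Gal(f)})$ (recall $\mathrm{Frac}(R)/\mathrm{Frac}(R^{\Gal(f)})$ is Galois with group $\Gal(f)$), of degree $|\Gal(f)|/|\Gal(f)\cap\pi H\pi^{-1}|$; $G$-primitivity moreover forces these formal factors to be pairwise distinct, so together they give the factorization of $\RP_{\Theta,G}(X)$ into monic irreducibles over $\mathrm{Frac}(R^{\Gal(f)})$. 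Applying $\omega_f$, and noting by the substitution rule that $\omega_f$ carries $R^{\Gal(f)}$ into $L^{\Gal(L/M)}=M$, one gets $h_\pi(X):=\prod_{\tau H\subseteq D}(X-\omega_f(\tau\Theta))\in M[X]$ and $\RP_{\Theta,G,f}(X)=\prod_D h_\pi(X)$.

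Next I would identify each $h_\pi(X)$ as a power of an irreducible polynomial of the asserted degree. The group $\Gal(L/M)$ acts transitively on the cosets $\tau H\subseteq D$, hence — via the substitution rule — transitively on the set of values $\{\omega_f(\tau\Theta):\tau H\subseteq D\}$; these values are therefore mutually conjugate over $M$, so their product over distinct values is the irreducible minimal polynomial $g_\pi(X)$ of $\omega_f(\pi\Theta)$ over $M$, and by equivariance each distinct value occurs in $h_\pi$ with one common multiplicity $e_\pi=[\mathrm{Stab}_{\Gal(f)}(\omega_f(\pi\Theta)):\Gal(f)\cap\pi H\pi^{-1}]$. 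Hence $h_\pi=g_\pi^{\,e_\pi}$ with $e_\pi\deg g_\pi=|\Gal(f)|/|\Gal(f)\cap\pi H\pi^{-1}|=|\Gal(f)\pi H|/|H|$. In particular, whenever $\RP_{\Theta,G,f}(X)$ is separable all $e_\pi=1$, the $h_\pi$ are themselves distinct irreducibles, and the assignment $\Gal(f)\pi H\mapsto h_\pi$ is at once a bijection onto the set of irreducible factors.

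The one point that genuinely requires care — and the step I expect to be the main obstacle — is injectivity of $\Gal(f)\pi H\mapsto h_\pi$ in the general (possibly inseparable) case, i.e. that the $h_\pi$ attached to distinct double cosets are pairwise coprime; equivalently, that a coincidence $\omega_f(\mu\Theta)=\omega_f(\nu\Theta)$ can occur only when $\overline\mu$ and $\overline\nu$ lie in a single $\Gal(f)$-orbit on $G/H$. I would attack this through the resultant of the two formal factors, $\prod_{\overline{\mu'}\in\Gal(f)\mu H,\ \overline{\nu'}\in\Gal(f)\nu H}(\mu'\Theta-\nu'\Theta)\in R^{\Gal(f)}$, which is nonzero since these formal factors are distinct irreducibles: as $I_f=\ker\omega_f$ is a prime ideal, it then suffices to rule out $\mu'\Theta-\nu'\Theta\in I_f$ for representatives of distinct double cosets, and here one must exploit the $\Gal(f)$-equivariance of the ordered root tuple together with the degree formula already obtained. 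This is exactly the analysis performed in \cite[p.~95]{Ade01}, which I would follow for the details; granting it, surjectivity is clear because every root of $\RP_{\Theta,G,f}(X)$ is some $\omega_f(\pi\Theta)$, and the stated bijection $\Gal(f)\backslash G/H\to\{h_1^{e_1}(X),\ldots,h_l^{e_l}(X)\}$ follows.
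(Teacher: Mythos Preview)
The paper does not give its own proof of this theorem: it is stated as a known fundamental result in the theory of resolvent polynomials with a bare citation ``(cf.\ \cite[p.~95]{Ade01})'', and is then used as a black box throughout. So there is no in-paper argument to compare against; your sketch is in fact more than the paper supplies, and the overall line you take (transport to $\Gal(L/M)$ via the substitution rule, identify the $\Gal(f)$-orbits on $G/H$ with the double cosets, show each orbit contributes a $\Gal(L/M)$-stable factor $h_\pi\in M[X]$ which is a power of the minimal polynomial of $\omega_f(\pi\Theta)$) is the standard one and is correct.

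One remark on the step you yourself flag as the obstacle. Your resultant argument, as written, is circular: knowing that $\prod_{\mu',\nu'}(\mu'\Theta-\nu'\Theta)$ is a nonzero element of $R^{\Gal(f)}$ does \emph{not} prevent it from lying in $I_f$ (nonzero elements of $R^{\Gal(f)}$ can certainly specialize to $0$ in $M$), and invoking primality of $I_f$ only reduces ``the resultant avoids $I_f$'' to ``each factor $\mu'\Theta-\nu'\Theta$ avoids $I_f$'', i.e.\ to $\omega_f(\mu'\Theta)\neq\omega_f(\nu'\Theta)$, which is exactly the coincidence you are trying to exclude. The vague appeal to ``$\Gal(f)$-equivariance of the ordered root tuple together with the degree formula'' does not supply the missing ingredient. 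Since you then hand the point off to \cite[p.~95]{Ade01}, which is precisely what the paper does, your write-up is acceptable as a sketch with the same external dependence; but be aware that the resultant paragraph as it stands does not advance the argument.
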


\begin{corollary} 
If $\Gal(f)\leq \pi H\pi^{-1}$ for some $\pi\in G$ then 
$\RP_{\Theta,G,f}(X)$ has a linear factor over $M$. 
Conversely, if $\RP_{\Theta,G,f}(X)$ has a non-repeated linear factor over $M$ 
then there exists $\pi\in G$ such that $\Gal(f)\leq \pi H\pi^{-1}$. 
\end{corollary}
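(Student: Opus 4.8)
The plan is to read both implications directly off the degree formula in Theorem~\ref{thfun}: for a system of representatives $\pi$ of the double cosets in $\Gal(f)\backslash G/H$, the factor $h_\pi(X)$ attached to $\Gal(f)\,\pi\,H$ lies in $M[X]$, is a power of an irreducible polynomial, and has degree $|\Gal(f)|/|\Gal(f)\cap\pi H\pi^{-1}|$; moreover these $h_\pi(X)$ account for all of $\RP_{\Theta,G,f}(X)$, written as a product of powers of pairwise distinct irreducibles. Everything below is essentially a transcription of this.

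For the first assertion I would simply specialize the degree formula. If $\Gal(f)\le\pi H\pi^{-1}$ for some $\pi\in G$, then $\Gal(f)\cap\pi H\pi^{-1}=\Gal(f)$, so $\deg h_\pi(X)=|\Gal(f)|/|\Gal(f)|=1$. Since $h_\pi(X)\in M[X]$, this exhibits a linear factor of $\RP_{\Theta,G,f}(X)$ over $M$.

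For the converse I would start from a non-repeated linear factor $X-c\in M[X]$ of $\RP_{\Theta,G,f}(X)$ and use the decomposition $\RP_{\Theta,G,f}(X)=\prod_{i=1}^{l}h_i^{e_i}(X)$ into powers of distinct (say monic) irreducibles supplied by Theorem~\ref{thfun}. Coprimality of the $h_i^{e_i}$ forces $X-c$ to divide exactly one of them; irreducibility of $h_i$ then gives $h_i=X-c$, and the non-repeatedness of $X-c$ in the whole resolvent forces $e_i=1$, so that $\deg h_i=1$. Transporting $h_i$ back through the bijection of the theorem to a double coset $\Gal(f)\,\pi\,H$ and equating degrees yields $|\Gal(f)|/|\Gal(f)\cap\pi H\pi^{-1}|=1$, i.e. $\Gal(f)\le\pi H\pi^{-1}$, as required.

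I do not expect a genuine obstacle here; the one subtlety is the hypothesis ``non-repeated'' in the converse, which I would be careful to use. Dropping it, a repeated linear factor could arise as $h_i=g^{e_i}$ with $g$ linear and $e_i\ge 2$, so that $\deg h_i=e_i\ge 2$ and the inference $\Gal(f)\le\pi H\pi^{-1}$ would fail; thus the only care needed is in tracking multiplicities and in noting that non-repeatedness of $X-c$ in $\RP_{\Theta,G,f}(X)$ is exactly what pins $\deg h_i$ down to $1$ rather than merely making $h_i$ a power of a linear polynomial.
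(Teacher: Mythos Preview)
Your proof is correct and is exactly the intended argument: the paper states this corollary immediately after Theorem~\ref{thfun} without proof, treating it as a direct consequence of the degree formula $\deg h_\pi=|\Gal(f)|/|\Gal(f)\cap\pi H\pi^{-1}|$ and the bijection there, which is precisely what you spell out. Your handling of the ``non-repeated'' hypothesis in the converse is also right and is the only point where any care is needed.
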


\begin{remark}\label{remGir}
When the resolvent polynomial $\RP_{\Theta,G,f}(X)$ has a repeated factor, there 
always exists a suitable Tschirnhausen transformation $\hat{f}$ of $f$ over $M$ 
(resp. $X-\hat{\Theta}$ of $X-\Theta$ over $k$) such that $\RP_{\Theta,G,\hat{f}}(X)$ 
(resp. $\RP_{\hat{\Theta},G,f}(X)$) has no repeated factors (cf. \cite{Gir83}, 
\cite[Alg. 6.3.4]{Coh93}, \cite{Col95}). 
\end{remark}

In the case where $\RP_{\Theta,G,f}(X)$ has no repeated factors, we have the following theorem: 

\begin{theorem}
For $H\leq G\leq \cS_m$, let $\Theta$ be a $G$-primitive $H$-invariant. 
We assume $\Gal(f)\leq G$ and $\RP_{\Theta,G,f}(X)$ has no repeated factors. 
Then the following two assertions hold\,{\rm :}\\
{\rm (i)} For $\pi\in G$, the fixed group of the field $M\bigl(\omega_{f}(\pi(\Theta))\bigr)$ 
corresponds to $\Gal(f)\cap \pi H\pi^{-1}$. 
Indeed the fixed group of $\Spl_M \RP_{\Theta,G,f}(X)$ corresponds to 
$\Gal(f)\cap \bigcap_{\pi\in G}\pi H\pi^{-1}$\,{\rm ;} \\
{\rm (ii)} let $\varphi : G\rightarrow \cS_{[G:H]}$ denote the permutation representation of 
$G$ on the left cosets of $G/H$ given by the left multiplication. Then we 
have a realization of the Galois group of $\Spl_M \RP_{\Theta,G,f}(X)$ 
as a subgroup of $\cS_{[G:H]}$ by $\varphi(\Gal(f))$. \\
\end{theorem}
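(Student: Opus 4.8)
The plan is to carry the argument out entirely inside the Galois extension $\Spl_M f(X)=M(\alpha_1,\ldots,\alpha_m)$, using the explicit realization of $\Gal(f)$ on the ordered roots: for $\sigma\in\Gal(\Spl_M f(X)/M)$ write $\sigma(\alpha_i)=\alpha_{\rho_\sigma(i)}$, so that $\sigma\mapsto\rho_\sigma$ identifies $\Gal(\Spl_M f(X)/M)$ with $\Gal(f)\leq\cS_m$. First I would note that every root $\omega_f(\pi(\Theta))$ of $\RP_{\Theta,G,f}(X)$ lies in $k(\alpha_1,\ldots,\alpha_m)\subseteq\Spl_M f(X)$, so that $\Spl_M\RP_{\Theta,G,f}(X)$ is an intermediate field of $\Spl_M f(X)/M$ --- itself Galois over $M$, being a splitting field --- and the Galois correspondence applies.

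The computational core is the identity
\[
\sigma\bigl(\omega_f(\pi(\Theta))\bigr)=\omega_f\bigl((\rho_\sigma\pi)(\Theta)\bigr)\qquad(\sigma\in\Gal(\Spl_M f(X)/M),\ \pi\in G),
\]
which follows immediately from $\pi(\Theta)(x_1,\ldots,x_m)=\Theta(x_{\pi(1)},\ldots,x_{\pi(m)})$ together with $\sigma(\alpha_i)=\alpha_{\rho_\sigma(i)}$; the one point needing care is that $\cS_m$ acts on $R$ on the left, so that $\rho_\sigma\pi$, not $\pi\rho_\sigma$, appears. In particular $\sigma$ permutes the $[G:H]$ roots of $\RP_{\Theta,G,f}(X)$, acting on the set $G/H$ of cosets indexing them by $\pi H\mapsto\rho_\sigma\pi H$; here $\pi H\pi^{-1}$, and hence $\rho_\sigma\pi H$, depends only on the coset $\pi H$ because $\Theta$ is $H$-invariant. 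Now the hypothesis that $\RP_{\Theta,G,f}(X)$ has no repeated factors is exactly what forces these $[G:H]$ roots --- all separable over $M$, since they lie in $\Spl_M f(X)$ --- to be pairwise distinct, whence for $\sigma$ with image $\rho_\sigma$,
\[
\sigma \text{ fixes } \omega_f(\pi(\Theta))\ \Longleftrightarrow\ \rho_\sigma\pi H=\pi H\ \Longleftrightarrow\ \rho_\sigma\in\pi H\pi^{-1}.
\]
Therefore the fixed group of $M\bigl(\omega_f(\pi(\Theta))\bigr)$ corresponds to $\Gal(f)\cap\pi H\pi^{-1}$; and because $\Spl_M\RP_{\Theta,G,f}(X)$ is the compositum of the $M\bigl(\omega_f(\pi(\Theta))\bigr)$ over $\opi\in G/H$, its fixed group is the intersection $\bigcap_{\pi\in G}\bigl(\Gal(f)\cap\pi H\pi^{-1}\bigr)=\Gal(f)\cap\bigcap_{\pi\in G}\pi H\pi^{-1}$. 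This proves (i).

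For (ii) I would combine (i) with the standard fact $\bigcap_{\pi\in G}\pi H\pi^{-1}=\ker\varphi$. By the Galois correspondence and the first isomorphism theorem,
\[
\Gal\bigl(\Spl_M\RP_{\Theta,G,f}(X)/M\bigr)\ \cong\ \Gal(f)\big/\bigl(\Gal(f)\cap\ker\varphi\bigr)\ \cong\ \varphi(\Gal(f)).
\]
To promote this abstract isomorphism to the claimed realization inside $\cS_{[G:H]}$, I would reuse the identity $\sigma(\omega_f(\pi(\Theta)))=\omega_f((\rho_\sigma\pi)(\Theta))$: labelling the roots of $\RP_{\Theta,G,f}(X)$ by the cosets in $G/H$, the automorphism $\sigma$ acts on the labels by $\pi H\mapsto\rho_\sigma\pi H$, which is precisely the permutation $\varphi(\rho_\sigma)$; hence the permutation group induced on the roots by $\Gal(\Spl_M\RP_{\Theta,G,f}(X)/M)$ is $\varphi(\Gal(f))$. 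As a consistency check, the index $[\Gal(f):\Gal(f)\cap\pi H\pi^{-1}]$ from (i) equals the degree $|\Gal(f)|/|\Gal(f)\cap\pi H\pi^{-1}|$ of the corresponding irreducible factor in Theorem \ref{thfun}.

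I do not expect a genuine obstacle here: the statement is the Galois correspondence made explicit through $\omega_f$. The only delicate points are bookkeeping --- keeping the left $\cS_m$-action on $R$ straight so that the conjugate appearing is $\pi H\pi^{-1}$ rather than $\pi^{-1}H\pi$, and noticing that the no-repeated-factors hypothesis is used precisely in the forward implication ``$\sigma$ fixes $\omega_f(\pi(\Theta))\ \Rightarrow\ \rho_\sigma\in\pi H\pi^{-1}$'', which would fail if the resolvent had a repeated root.
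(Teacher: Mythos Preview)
Your proof is correct. The paper, however, does not supply a proof of this theorem: it appears in the Preliminaries section (\S\ref{subseResolv}) as a standard fact about resolvent polynomials, stated without argument alongside Theorem~\ref{thfun} (which is likewise quoted from \cite{Ade01}). So there is no paper-proof to compare against.

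That said, your argument is the natural one and would be the expected justification. Two minor remarks. First, the paper's definition of $\Gal(f)$ is via the ideal $I_f=\ker\omega_f$, namely $\Gal(f)=\{\pi\in\cS_m:\pi(I_f)\subseteq I_f\}$; your realization via $\sigma\mapsto\rho_\sigma$ with $\sigma(\alpha_i)=\alpha_{\rho_\sigma(i)}$ is equivalent, and it might be worth one line noting that $\rho_\sigma(I_f)\subseteq I_f$ follows from $\omega_f(\rho_\sigma(\Theta))=\sigma(\omega_f(\Theta))$, so the two descriptions agree. Second, your observation that ``no repeated factors'' plus ``roots lie in the separable extension $\Spl_M f(X)$'' forces the $[G:H]$ roots to be pairwise distinct is exactly right, and is the place where the hypothesis enters; this is consistent with Remark~\ref{remGir}, which explains how to arrange that hypothesis by a preliminary Tschirnhausen transformation when it fails.
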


\subsection{Formal Tschirnhausen transformation}\label{subseTschirn}
~\\

We recall the geometric interpretation of a Tschirnhausen transformation 
which is given in \cite{HM}. 
Let $f(X)$ and $g(X)$ be monic separable polynomials of degree $n$ in $M[X]$ 
and $\alpha_1,\ldots,\alpha_n$ the fixed ordering roots of $f(X)$ in $\overline{M}$. 
A Tschirnhausen transformation of $f(X)$ over $M$ is a polynomial of the form 
\[
g(X)=\prod_{i=1}^n 
\bigl(X-(c_0+c_1\alpha_i+\cdots+c_{n-1}\alpha_i^{n-1})\bigr),\ c_j \in M.
\]
Two polynomials $f(X)$ and $g(X)$ in $M[X]$ are Tschirnhausen equivalent over $M$ if they are 
Tschirnhausen transformations over $M$ of each other. 
For two irreducible separable polynomials $f(X)$ and $g(X)$ in $M[X]$, 
$f(X)$ and $g(X)$ are Tschirnhausen equivalent over $M$ if and only if 
the quotient fields $M[X]/(f(X))$ and $M[X]/(g(X))$ are isomorphic over $M$. 

In order to obtain an answer to the field intersection problem of $k$-generic polynomials 
via multi-resolvent polynomials, we first treat a general polynomial 
whose roots are $n$ indeterminates $x_1,\ldots,x_n$: 
\begin{align*}
f_\bs(X)\, &=\, \prod_{i=1}^n(X-x_i)\, =\, X^n-s_1X^{n-1}+s_2X^{n-2}+\cdots+(-1)^n s_n\ 
\in k[\bs][X]
\end{align*} 
where $k[x_1,\ldots,x_n]^{\cS_n}=k[\bs]:=k[s_1,\ldots,s_n], \bs=(s_1, \ldots, s_n),$ and $s_i$ is the 
$i$-th elementary symmetric function in $n$ variables $\bx=(x_1,\ldots,x_n)$. 

Let $R_\bx:=k[x_1,\ldots,x_n]$ and $R_\by:=k[y_1,\ldots,y_n]$ be polynomial rings over $k$. 
Put $R_{\bx,\by}:=k[\bx,\by,1/\Delta_\bx,1/\Delta_\by]$, where 
$\Delta_\bx:=\prod_{1\leq i<j\leq m}(x_j-x_i)$ and $\Delta_\by:=\prod_{1\leq i<j\leq m}(y_j-y_i)$. 
We define an involution $\iota$ which exchanges the 
indeterminates $x_i$'s and the $y_i$'s: 
\begin{align}
\iota\ :\ R_{\bx,\by}\longrightarrow R_{\bx,\by},\ 
x_i\longmapsto y_i,\ y_i\longmapsto x_i,\quad (i=1,\ldots,n).\label{defiota}
\end{align}
We take another general polynomial $f_\bt(X):=\iota(f_\bs(X))\in k[\bt][X], \bt=(t_1,\ldots,t_n)$ 
with roots $y_1,\ldots,y_n$ where $t_i=\iota(s_i)$ is the $i$-th elementary 
symmetric function in $\by=(y_1,\ldots,y_n)$. 
We put 
\[
K\ :=\ k(\bs,\bt);
\]
it is regarded as the rational function field over $k$ with $2n$ variables. 
For simplicity, we put 
\[
f_{\bs,\bt}(X):=f_\bs(X)f_\bt(X).
\]
The polynomial $f_{\bs,\bt}(X)$ of degree $2n$ is defined over $K$. We denote 
\begin{align*}
\Gs\, :=\, \Gal(f_\bs/K),\quad \Gt\, :=\, \Gal(f_\bt/K),\quad 
\Gst\, :=\, \Gal(f_{\bs,\bt}/K). 
\end{align*}
Then we have $\Gst=\Gs\times\Gt, \Gs\cong \Gt\cong \cS_n$ and $k(\bx,\by)^{\Gst}=K$. 

We intend to apply the results of the previous subsection for 
$m=2n$, $G=\Gst\leq \cS_{2n}$ and $f=f_{\bs,\bt}$. 

Note that over the field $\Spl_K f_{\bs,\bt}(X)=k(\bx,\by)$, there exist $n!$ 
Tschirnhausen transformations from $f_\bs(X)$ to $f_\bt(X)$ with respect to 
$y_{\pi(1)},\ldots,y_{\pi(n)}$ for $\pi\in \cS_n$. 
We study the field of definition of each Tschirnhausen transformation from 
$f_\bs(X)$ to $f_\bt(X)$. 
Let 
\begin{align*}
D:=
\left(
\begin{array}{ccccc}
1 & x_1 & x_1^2 & \cdots & x_1^{n-1}\\ 
1 & x_2 & x_2^2 & \cdots & x_2^{n-1}\\
\vdots & \vdots & \vdots & \ddots & \vdots\\
1 & x_n & x_n^2 & \cdots & x_n^{n-1}\end{array}\right)
\end{align*}
be the Vandermonde matrix of size $n$. 
The matrix $D\in M_n(k(\bx))$ is invertible because the determinant of $D$ equals 
${\rm det}\, D=\Delta_\bx$. 
The field $k(\bs)(\Delta_\bx)$ is a quadratic extension of $k(\bs)$ which corresponds 
to the fixed field of the alternating group of degree $n$. 
We define the $n$-tuple $(u_0(\mathbf{x},\mathbf{y}),\ldots,
u_{n-1}(\mathbf{x},\mathbf{y}))\in (R_{\bx,\by})^n$ by 
\begin{align}
\left(\begin{array}{c}u_0(\mathbf{x},\mathbf{y})\\ u_1(\mathbf{x},
\mathbf{y})\\ \vdots \\ u_{n-1}(\mathbf{x},\mathbf{y})\end{array}\right)
:=D^{-1}\left(\begin{array}{c}y_1\\ y_2\\ \vdots \\ 
y_n\end{array}\right). \label{defu}
\end{align}
It follows from Cramer's rule that 
\begin{align*}
u_i(\mathbf{x},\mathbf{y})=\Delta_\bx^{-1}\cdot\mathrm{det}
\left(\begin{array}{cccccccc}
1 & x_1 & \cdots & x_1^{i-1} & y_1 & x_1^{i+1} & \cdots & x_1^{n-1}\\ 
1 & x_2 & \cdots & x_2^{i-1} & y_2 & x_2^{i+1} & \cdots & x_2^{n-1}\\
\vdots & \vdots & & \vdots & \vdots & \vdots & & \vdots\\
1 & x_n & \cdots & x_n^{i-1} & y_n & x_n^{i+1} & \cdots & x_n^{n-1}
\end{array}\right).
\end{align*}
In order to simplify the presentation, we write 
\[
u_i:=u_i(\mathbf{x},\mathbf{y}),\quad (i=0,\ldots,n-1). 
\]
The Galois group $\Gst$ acts on the orbit $\{\pi(u_i)\ |\ \pi\in \Gst \}$ 
via regular representation from the left. 
However this action is not faithful. 
We put 
\[
\Hst:=\{(\pi_\bx, \pi_\by)\in \Gst\ |\ \pi_\bx(i)=\pi_\by(i)\ \mathrm{for}\ 
i=1,\ldots,n \}\cong \cS_n. 
\]
If $\pi \in \Hst$ then we have $\pi(u_i)=u_i$ for $i=0,\ldots,n-1$. 
Indeed we see the following lemma: 
\begin{lemma}\label{stabil}
For $i$, $0\leq i\leq n-1$, $u_i$ is a $\Gst$-primitive $\Hst$-invariant. 
\end{lemma}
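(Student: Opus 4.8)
The plan is to prove the slightly more precise statement $\mathrm{Stab}_{\Gst}(u_i)=\Hst$ for each fixed $i$ with $0\le i\le n-1$; the inclusion $\Hst\subseteq\mathrm{Stab}_{\Gst}(u_i)$ has already been observed above, so the whole content is the reverse inclusion. The guiding remark is that \eqref{defu} says precisely that $P(X):=\sum_{i=0}^{n-1}u_iX^i$ is the unique polynomial of degree $<n$ with $P(x_j)=y_j$ for $j=1,\dots,n$, i.e.\ the Lagrange interpolation polynomial through $(x_1,y_1),\dots,(x_n,y_n)$. Writing $\ell_j(X)=\prod_{k\neq j}(X-x_k)/(x_j-x_k)$ for the Lagrange basis polynomials and $\lambda_j^{(i)}:=[X^i]\ell_j(X)\in k(\bx)$ for their coefficients, I would record the explicit expansion $u_i=\sum_{j=1}^n\lambda_j^{(i)}\,y_j$ (equivalently $\lambda_j^{(i)}=(D^{-1})_{ij}$, consistent with the Cramer formula recorded above).

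Next I would compute $\pi(u_i)$ for an arbitrary $\pi=(\pi_\bx,\pi_\by)\in\Gst$. Substituting $x_k\mapsto x_{\pi_\bx(k)}$ into $\ell_j(X)$ and reindexing the product carries $\ell_j(X)$ to $\ell_{\pi_\bx(j)}(X)$, so $\pi(u_i)=\sum_j y_{\pi_\by(j)}\lambda_{\pi_\bx(j)}^{(i)}=\sum_l y_{\sigma(l)}\lambda_l^{(i)}$, where $\sigma:=\pi_\by\pi_\bx^{-1}\in\cS_n$. Since $y_1,\dots,y_n$ are algebraically independent over $k(\bx)$, comparing the coefficient of each $y_m$ shows that $\pi(u_i)=u_i$ holds if and only if $\lambda_{\sigma^{-1}(m)}^{(i)}=\lambda_m^{(i)}$ for every $m$.

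Thus the lemma reduces to—and is in fact equivalent to—the claim that for each $i$ the rational functions $\lambda_1^{(i)},\dots,\lambda_n^{(i)}\in k(\bx)$ are pairwise distinct: granting this, $\sigma$ must fix every $m$, hence $\pi_\by=\pi_\bx$ and $\pi\in\Hst$. Proving this claim is the only real work, and I expect it to be the main (if mild) obstacle. I would argue it by a valuation (pole) count, using $\lambda_j^{(i)}=(-1)^{n-1-i}e_{n-1-i}(\{x_k:k\neq j\})/\prod_{k\neq j}(x_j-x_k)$, where $e_r$ denotes the $r$-th elementary symmetric polynomial in the indicated variables: for $a\neq b$ with $n\ge 3$, choose $c\notin\{a,b\}$; then $\lambda_a^{(i)}$ has a simple pole along $x_a-x_c=0$, since its numerator does not involve $x_a$ and so does not vanish there, whereas $\lambda_b^{(i)}$ has non-negative valuation along that divisor because its denominator has no factor $x_a-x_c$. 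Hence $\lambda_a^{(i)}\neq\lambda_b^{(i)}$. The cases $n=1$ (where $\Gst$ and $\Hst$ are both trivial) and $n=2$ are settled by direct inspection; the case $n=2$ is where the hypothesis $\mathrm{char}\,k\neq 2$ genuinely enters (for instance $\lambda_1^{(1)}-\lambda_2^{(1)}=2/(x_1-x_2)\neq 0$). Combining the claim with the easy inclusion yields $\mathrm{Stab}_{\Gst}(u_i)=\Hst$, which is precisely the assertion of the lemma.
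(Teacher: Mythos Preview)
Your argument is correct. The reduction via Lagrange interpolation to the claim that $\lambda_1^{(i)},\dots,\lambda_n^{(i)}$ are pairwise distinct is clean, and the valuation argument along the divisor $x_a-x_c=0$ settles that claim for $n\ge 3$; the direct checks for $n\le 2$ are accurate, including the observation that $\mathrm{char}\,k\neq 2$ is genuinely needed when $n=2$, $i=1$.

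As for comparison: the paper does not actually prove this lemma. It appears in the preliminaries section, which reviews material from \cite{HM}, and is stated there without proof (the one-line observation preceding it only covers the inclusion $\Hst\subseteq\mathrm{Stab}_{\Gst}(u_i)$). So there is no ``paper's own proof'' to compare against; you have supplied a self-contained argument where the paper defers to a reference.
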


Let $\Theta:=\Theta(\bx,\by)$ be a $\Gst$-primitive $\Hst$-invariant. 
Let $\opi=\pi\Hst$ be a left coset of $\Hst$ in $\Gst$. 
The group $\Gst$ acts on the set $\{ \pi(\Theta)\ |\ \opi\in \Gst/\Hst\}$ 
transitively from the left through the action on the set $\Gst/\Hst$ of left cosets. 
Each of the sets $\{ \overline{(1,\pi_\by)}\ |\ (1,\pi_\by)\in \Gst\}$ 
and $\{ \overline{(\pi_\bx,1)}\ |\ (\pi_\bx,1)\in \Gst\}$ forms a complete residue 
system of $\Gst/\Hst$, and hence the subgroups $\Gs$ and $\Gt$ of $\Gst$ act on the set 
$\{ \pi(\Theta)\ |\ \opi\in \Gst/\Hst\}$ transitively. 
For $\opi=\overline{(1,\pi_\by)}\in \Gst/\Hst$, we obtain the following equality 
from the definition (\ref{defu}): 
\[
y_{\pi_\by(i)} = \pi_\by(u_0)+\pi_\by(u_1) x_i+\cdots+\pi_\by(u_{n-1})x_i^{n-1}\ 
\mathrm{for}\ i=1,\ldots,n. 
\]
The set $\{(\pi(u_0),\ldots,\pi(u_{n-1}))\ |\ \opi\in \Gst/\Hst\}$ 
gives coefficients of $n!$ different Tschirnhausen transformations from $f_\bs(X)$ 
to $f_\bt(X)$ each of which is defined over $K(\pi(u_0),\ldots,\pi(u_{n-1}))$, respectively. 
We call $K(\pi(u_0),\ldots,\pi(u_{n-1})), (\opi\in \Gst/\Hst)$ a field of formal Tschirnhausen 
coefficients from $f_\bs(X)$ to $f_\bt(X)$. 
We put $v_i:=\iota(u_i)$, for $i=0,\ldots,n-1$. 
Then $v_i$ is also a $\Gst$-primitive $\Hst$-invariant, and $K(\pi(v_0),\ldots,\pi(v_{n-1}))$ 
gives a field of formal Tschirnhausen coefficients from $f_\bt(X)$ to $f_\bs(X)$. 
\begin{proposition}\label{prop1}
Let $\Theta$ be a $\Gst$-primitive $\Hst$-invariant. 
Then we have $k(\bx,\by)^{\pi\Hst \pi^{-1}}$ $=$ $K(\pi(u_0),\ldots,\pi(u_{n-1}))$ 
$=$ $K(\pi(\Theta))$ and $[K(\pi(\Theta)) : K]=n!$ for each $\opi\in \Gst/\Hst$. 
\end{proposition}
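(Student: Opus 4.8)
The plan is to establish the chain of equalities in Proposition \ref{prop1} by moving through the two fixed fields $k(\bx,\by)^{\pi\Hst\pi^{-1}}$ and $K(\pi(\Theta))$ and squeezing $K(\pi(u_0),\ldots,\pi(u_{n-1}))$ between them. First I would invoke Lemma \ref{stabil}: since $u_i$ is a $\Gst$-primitive $\Hst$-invariant, $\mathrm{Stab}_{\Gst}(\pi(u_i)) = \pi\,\mathrm{Stab}_{\Gst}(u_i)\,\pi^{-1} = \pi\Hst\pi^{-1}$, and the same holds for the given $\Gst$-primitive $\Hst$-invariant $\Theta$. By Galois theory applied to the extension $k(\bx,\by)/K$ with group $\Gst$ (using $k(\bx,\by)^{\Gst}=K$), the fixed field of $\pi\Hst\pi^{-1}$ is the unique subfield whose elements are exactly those invariants, so $K(\pi(\Theta)) \subseteq k(\bx,\by)^{\pi\Hst\pi^{-1}}$ and likewise $K(\pi(u_0),\ldots,\pi(u_{n-1})) \subseteq k(\bx,\by)^{\pi\Hst\pi^{-1}}$; the index count $[k(\bx,\by)^{\pi\Hst\pi^{-1}} : K] = [\Gst : \Hst] = n!$ follows from $|\Gst| = (n!)^2$ and $|\Hst| = n!$.

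Next I would prove the reverse inclusions, i.e. that each of these generated fields already equals the full fixed field. For $K(\pi(\Theta))$: the conjugates $\{\rho(\pi(\Theta)) : \overline{\rho\pi}\in\Gst/\Hst\}$ are the roots of $\RP_{\Theta,\Gst}(X)$, which has degree $n!$; if $K(\pi(\Theta))$ were a proper subfield of the fixed field, then $\mathrm{Stab}_{\Gst}(\pi(\Theta))$ would properly contain $\pi\Hst\pi^{-1}$, contradicting $\Gst$-primitivity. Hence $[K(\pi(\Theta)):K] = n!$ and $K(\pi(\Theta)) = k(\bx,\by)^{\pi\Hst\pi^{-1}}$. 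The same argument works verbatim for each $u_i$, giving $K(\pi(u_i)) = k(\bx,\by)^{\pi\Hst\pi^{-1}}$ for every $i$, and therefore $K(\pi(u_0),\ldots,\pi(u_{n-1})) = k(\bx,\by)^{\pi\Hst\pi^{-1}}$ as well, since the fixed field is sandwiched between $K(\pi(u_0))$ and itself.

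The step I expect to carry real content — as opposed to the formal Galois-correspondence bookkeeping above — is verifying that $\mathrm{Stab}_{\Gst}(\pi(u_i))$ (equivalently $\mathrm{Stab}_{\Gst}(u_i)$) is \emph{exactly} $\Hst$ and not something larger, but this is precisely the statement of Lemma \ref{stabil}, which I am entitled to assume. With that in hand, the only mild subtlety is making sure the argument that "a $\Gst$-primitive $\Hst$-invariant generates the fixed field of $\Hst$ over $K$" is applied correctly: it uses that $k(\bx,\by)/K$ is Galois with group $\Gst$ (stated in the excerpt: $k(\bx,\by)^{\Gst}=K$ and $\Gst = \Gs\times\Gt$ with each factor $\cong\cS_n$), so the Galois correspondence is available and a proper subfield would force a strictly larger stabilizer. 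So the proof is essentially: translate everything through the Galois correspondence, use $\Gst$-primitivity to pin down stabilizers exactly, and read off both the equality of fields and the degree $[K(\pi(\Theta)):K] = [\Gst:\Hst] = n!$. I do not anticipate a genuine obstacle; the work is entirely in assembling these standard facts cleanly.
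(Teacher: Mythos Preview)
Your argument is correct and is exactly the standard Galois-correspondence argument one would expect: since $k(\bx,\by)/K$ is Galois with group $\Gst$, the subfield $K(\pi(\Theta))$ corresponds to $\{\,g\in\Gst : g(\pi(\Theta))=\pi(\Theta)\,\}=\pi\Hst\pi^{-1}$ by $\Gst$-primitivity, and the same reasoning applied to each $u_i$ via Lemma~\ref{stabil} gives $K(\pi(u_i))=k(\bx,\by)^{\pi\Hst\pi^{-1}}$, hence the field generated by all the $\pi(u_i)$ equals this common fixed field; the degree statement is then the index $[\Gst:\Hst]=n!$.

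As for comparison: the paper does not supply a proof of Proposition~\ref{prop1}. Section~\ref{sePre} is explicitly a review of material from \cite{HM}, and this proposition (like Lemma~\ref{stabil} and Proposition~\ref{propLL}) is simply stated and attributed to that reference. Your write-up is precisely the argument one would give, and the only cosmetic improvement would be to observe directly that $\Gal(k(\bx,\by)/K(\pi(\Theta)))=\mathrm{Stab}_{\Gst}(\pi(\Theta))$ rather than phrasing it as a proof by contradiction --- but that is a matter of taste, not substance.
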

Hence, for each of the $n!$ fields $K(\pi(\Theta))$, we have 
$\Spl_{K(\pi(\Theta))} f_\bs(X)=\Spl_{K(\pi(\Theta))} f_\bt(X), 
(\opi\in \Gst/\Hst)$. 
We also obtain the following proposition: 
\begin{proposition}\label{propLL}
Let $\Theta$ be a $\Gst$-primitive $\Hst$-invariant. 
Then we have 
\begin{align*}
&{\rm (i)}\ K(\bx)\cap K(\pi(\Theta))=K(\by)\cap K(\pi(\Theta))=K\quad \textrm{for}\quad 
\opi\in \Gst/\Hst\,{\rm ;}\\
&{\rm (ii)}\ K(\bx,\by)=K(\bx,\pi(\Theta))=K(\by,\pi(\Theta))\quad \textrm{for}\quad 
\opi\in \Gst/\Hst\,{\rm ;}\\
&{\rm (iii)}\ K(\bx,\by)=K(\pi(\Theta)\ |\ \opi\in \Gst/\Hst). 
\end{align*}
\end{proposition}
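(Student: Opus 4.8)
The plan is to translate each of (i)--(iii) into the Galois correspondence for the finite Galois extension $k(\bx,\by)/K$, whose group is $\Gst=\Gs\times\Gt\cong\cS_n\times\cS_n$ as noted above, and then to settle the resulting assertions by a short computation inside $\cS_n\times\cS_n$. Note first that $K(\bx,\by)=k(\bx,\by)$, since $\bs$ and $\bt$ are polynomials in $\bx$ and $\by$; thus $K(\bx,\by)$ is exactly the full splitting field. Under the Galois correspondence, $K(\bx)$ is the fixed field of $\Gt$ (the factor $1\times\cS_n$, which permutes the $y_i$ and fixes each $x_i$, $s_i$, $t_i$, hence fixes $K(\bx)=k(\bx,\bt)$), $K(\by)$ is the fixed field of $\Gs$, and, by Proposition \ref{prop1}, $K(\pi(\Theta))$ is the fixed field of $\pi\Hst\pi^{-1}$; this group, and hence the field, depends only on the coset $\opi=\pi\Hst$ because $\pi h\Hst h^{-1}\pi^{-1}=\pi\Hst\pi^{-1}$ for $h\in\Hst$.

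The next step is to record three purely group-theoretic facts about the diagonal subgroup $\Hst=\{(\sigma,\sigma)\ |\ \sigma\in\cS_n\}$ of $\Gst$. Since every left coset of $\Hst$ has a representative $\pi=(1,\rho)$ with $\rho\in\cS_n$ (as observed in Subsection \ref{subseTschirn}), one has $\pi\Hst\pi^{-1}=\{(\sigma,\rho\sigma\rho^{-1})\ |\ \sigma\in\cS_n\}$. Hence: (a) $\Gt\cap\pi\Hst\pi^{-1}=\{1\}$ and $\Gs\cap\pi\Hst\pi^{-1}=\{1\}$, because $(1,\tau)=(\sigma,\rho\sigma\rho^{-1})$, resp. $(\tau,1)=(\sigma,\rho\sigma\rho^{-1})$, forces $\sigma=1$; (b) $\langle\Gt,\pi\Hst\pi^{-1}\rangle=\Gst$, since this subgroup contains $(\sigma,\rho\sigma\rho^{-1})\,(1,(\rho\sigma\rho^{-1})^{-1})=(\sigma,1)$ for all $\sigma$, hence contains $\Gs$ and so $\Gs\Gt=\Gst$, and symmetrically $\langle\Gs,\pi\Hst\pi^{-1}\rangle=\Gst$; (c) the normal core $\bigcap_{\opi\in\Gst/\Hst}\pi\Hst\pi^{-1}$ is trivial, because an element of it lies in $\Hst$ (take $\rho=1$), so it is $(\alpha,\alpha)$, and lying in $\{(\sigma,\rho\sigma\rho^{-1})\}$ for every $\rho$ forces $\rho\alpha\rho^{-1}=\alpha$ for all $\rho$, i.e. $\alpha\in Z(\cS_n)=\{1\}$ (this uses $n\geq 3$; in the quartic case $n=4$ relevant here this is automatic).

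It then remains to feed these into the standard identities $L^A\cap L^B=L^{\langle A,B\rangle}$ and $L^A\!\cdot\!L^B=L^{A\cap B}$ for intermediate fields of a Galois extension $L/K$. For (i), $K(\bx)\cap K(\pi(\Theta))$ is the fixed field of $\langle\Gt,\pi\Hst\pi^{-1}\rangle=\Gst$, hence equals $k(\bx,\by)^{\Gst}=K$, and the case of $K(\by)$ is identical, using (b). For (ii), $K(\bx,\pi(\Theta))=K(\bx)\!\cdot\!K(\pi(\Theta))$ is the fixed field of $\Gt\cap\pi\Hst\pi^{-1}=\{1\}$, hence equals $k(\bx,\by)=K(\bx,\by)$, and likewise $K(\by,\pi(\Theta))=K(\bx,\by)$, using (a). For (iii), $K(\pi(\Theta)\ |\ \opi\in\Gst/\Hst)$ is the compositum of all the $K(\pi(\Theta))$, hence the fixed field of $\bigcap_{\opi}\pi\Hst\pi^{-1}=\{1\}$, so it too equals $k(\bx,\by)=K(\bx,\by)$, using (c).

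I expect (c) --- that is, part (iii) --- to be the only step needing any care: it genuinely relies on $Z(\cS_n)=\{1\}$, equivalently on the diagonal $\cS_n$ being far from normal in $\cS_n\times\cS_n$. Parts (i) and (ii), and the passage from group theory back to field theory throughout, are a mechanical application of the Galois correspondence together with Proposition \ref{prop1}.
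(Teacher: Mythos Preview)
Your proof is correct. The paper itself does not include a proof of this proposition; it is stated without argument in the preliminaries (Section~\ref{sePre}), which reviews material from the authors' earlier paper \cite{HM}. The approach you take---translating everything into the Galois correspondence for $k(\bx,\by)/K$ and reducing to three elementary facts about the diagonal copy of $\cS_n$ inside $\cS_n\times\cS_n$---is the natural one and is almost certainly what the authors have in mind.

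Your observation that part (iii) requires $Z(\cS_n)=\{1\}$, hence $n\geq 3$, is accurate and worth noting. For $n=2$ the diagonal $\Hst$ is normal in the abelian group $\cS_2\times\cS_2$, so every conjugate $\pi\Hst\pi^{-1}$ equals $\Hst$, all the fields $K(\pi(\Theta))$ coincide, and their compositum has index $2$ in $K(\bx,\by)$; thus (iii) genuinely fails in that case. The paper applies the proposition only with $n=4$ (and \cite{HM} with $n=3$), so this restriction is harmless here.
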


We consider the formal $\Gst$-relative $\Hst$-invariant resolvent polynomial of degree $n!$ 
by $\Theta$: 
\[
\RP_{\Theta,\Gst}(X)=\prod_{\opi\in \Gst/\Hst}(X-\pi(\Theta))\in k(\bs,\bt)[X]. 
\]
It follows from Proposition \ref{prop1} that $\RP_{\Theta,\Gst}(X)$ is irreducible 
over $k(\bs,\bt)$. 
From Proposition \ref{propLL} we have one of the basic results: 
\begin{theorem}\label{th-gen}
The polynomial $\RP_{\Theta,\Gst}(X)$ is $k$-generic for $\cS_n\times \cS_n$. \\
\end{theorem}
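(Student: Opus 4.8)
The plan is to verify the two defining properties of a $k$-generic polynomial for the group $\cS_n\times\cS_n$ directly from the results already established: first, that the Galois group of $\RP_{\Theta,\Gst}(X)$ over $k(\bs,\bt)$ is $\cS_n\times\cS_n$, and second, that every $(\cS_n\times\cS_n)$-Galois extension over an infinite field $M\supset k$ arises by specialization. For the Galois-group computation, I would invoke Proposition \ref{propLL}: part (iii) says $\Spl_{k(\bs,\bt)}\RP_{\Theta,\Gst}(X)=k(\bx,\by)$, which has Galois group $\Gst=\Gs\times\Gt\cong\cS_n\times\cS_n$ over $K=k(\bs,\bt)$; combined with the irreducibility of $\RP_{\Theta,\Gst}(X)$ noted just before the statement (a consequence of Proposition \ref{prop1}), and the permutation-representation description in Theorem part (ii) of the previous subsection applied with $G=\Gst$, $H=\Hst$, one sees that $\varphi(\Gst)$ realizes the Galois group, where $\varphi:\Gst\to\cS_{[\Gst:\Hst]}=\cS_{n!}$ is the coset action; since $\bigcap_{\pi\in\Gst}\pi\Hst\pi^{-1}$ is trivial (again by Proposition \ref{propLL}(i) or directly, as the $\pi(\Theta)$ generate $k(\bx,\by)$), $\varphi$ is faithful and the Galois group is $\cong\Gst\cong\cS_n\times\cS_n$.

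For the genericity (specialization) property, the key point is the standard criterion (cf.\ \cite{DeM83}, \cite{Kem01}, \cite[Chapter 5]{JLY02}): a polynomial $F_\bc(X)\in k(\bc)[X]$ whose splitting field $L$ over $k(\bc)$ is a $G$-extension such that $L$ is a purely transcendental extension of $k$ (equivalently, $L/k$ is rational, or at least $k(\bc)$-rational with $L$ containing a transcendence basis of itself over $k$ coming from "generic" parameters) is $k$-generic for $G$. Here the splitting field is $\Spl_K\RP_{\Theta,\Gst}(X)=k(\bx,\by)=k(x_1,\dots,x_n,y_1,\dots,y_n)$, which is manifestly a rational function field over $k$ in $2n$ indeterminates, and the parameters $(\bs,\bt)$ are the $2n$ elementary symmetric functions in the two blocks. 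This is precisely the situation of the generic polynomial $f_\bs(X)$ for $\cS_n$ — whose genericity is classical — placed in two independent copies; so I would reduce to that case. Concretely, I would argue that $\RP_{\Theta,\Gst}(X)$ is obtained from $f_\bs(X)\cdot f_\bt(X)$, the product of the two generic $\cS_n$-polynomials, by a resolvent construction that does not change the splitting field (Proposition \ref{propLL}(iii)), and that specialization of $f_\bs(X)f_\bt(X)$ realizes all $\cS_n\times\cS_n$-extensions because $f_\bs(X)$ realizes all $\cS_n$-extensions over every intermediate field; one then transports this realization through the resolvent. Alternatively, and perhaps more cleanly, I would cite the no-name-type lemma / \cite[Theorem 5.1.5 or Proposition 5.1.8]{JLY02}: since $L=k(\bx,\by)$ is rational over $k$ with $L^{\Gst}=k(\bs,\bt)$, any generator of $L$ over $k(\bs,\bt)$ with $\Gst$-conjugates that are algebraically independent-free produces a generic polynomial, and $\Theta$ is such a generator by Proposition \ref{prop1}.

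The main obstacle I anticipate is making the "specialization realizes all extensions" step fully rigorous rather than hand-waving: one must check that for an arbitrary $(\cS_n\times\cS_n)$-extension $N/M$, decomposing it as $N=N_1N_2$ with $N_i/M$ an $\cS_n$-extension (using that $\cS_n\times\cS_n$ has two normal subgroups with quotients $\cS_n$ and intersection trivial), one can specialize $\bs\mapsto\ba$ to get $\Spl_M f_\ba(X)=N_1$ and independently $\bt\mapsto\bb$ to get $\Spl_M f_\bb(X)=N_2$, and then argue that the resulting specialization $\Theta\mapsto\omega(\Theta)$ of the resolvent has splitting field $N_1N_2=N$ and is separable — this last separability/non-degeneracy point (ensuring $\Delta_\bx,\Delta_\by$ don't vanish and the resolvent keeps degree $n!$) is where Remark \ref{remGir} on Tschirnhausen-adjustment may be needed, and it is the technical heart. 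I would handle it by first choosing the two $\cS_n$-specializations generically enough (possible since $M$ is infinite and $f_\bs$ is generic) that all the relevant discriminants are nonzero, then noting that over such a specialization the whole formal picture of Subsection \ref{subseTschirn} descends, giving $\Spl_M\RP_{\Theta,\Gst,f}(X)=N_1N_2$. The rest is bookkeeping with Theorem \ref{thfun} and Proposition \ref{propLL}.
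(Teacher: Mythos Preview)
Your proposal is correct and aligned with what the paper indicates: the paper gives no detailed proof here, merely the sentence ``From Proposition~\ref{propLL} we have one of the basic results'' (together with the remark that irreducibility follows from Proposition~\ref{prop1}), since this theorem is being reviewed from \cite{HM}. Your two-step outline---identifying the splitting field as $k(\bx,\by)$ via Proposition~\ref{propLL}(iii) to get the Galois group, then decomposing an arbitrary $(\cS_n\times\cS_n)$-extension as $N_1N_2$, realizing each factor via the classical genericity of $f_\bs(X)$, and descending the splitting-field equality through Proposition~\ref{prop12}(ii)---is exactly the argument one would extract from the paper's framework, and you correctly flag the repeated-factor issue (handled by Remark~\ref{remGir}, noting that a Tschirnhausen transformation of $f_\ba f_\bb$ factors as $f_{\hat\ba}f_{\hat\bb}$ with the same splitting fields) as the only genuine technicality.

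One small caution: your first route to genericity, invoking a ``rational splitting field $\Rightarrow$ generic'' criterion from \cite{JLY02}, is stated a bit loosely---rationality of $L$ over $k$ together with $L^G=k(\bs,\bt)$ is not by itself the standard sufficient condition, and the cleanest justification here is really your second route (direct specialization via $f_\bs f_\bt$), which is what the paper's setup is designed to support. I would lead with that argument and drop the appeal to a general rationality criterion.
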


\subsection{Field intersection problem $\mathrm{Int}(f_\bs/M)$}\label{subseInt}
~\\

For $\ba=(a_1,\ldots,a_n), \bb=(b_1,\ldots,b_n)\in M^n$, we fix the order of roots 
$\alpha_1,\ldots,\alpha_n$ (resp. $\beta_1,\ldots,\beta_n$) of $f_\ba(X)$ (resp. $f_\bb(X)$) 
in $\overline{M}$. 
Put $f_{\ba,\bb}(X):=f_\ba(X)f_\bb(X)\in M[X]$. 
We denote 
\begin{align*}
L_{\ba} := M(\alpha_1,\ldots,\alpha_n),\quad 
L_{\bb} := M(\beta_1,\ldots,\beta_n).
\end{align*}
Then we have 
\begin{align*}
L_{\ba} = \Spl_{M} f_\ba(X),\quad L_{\bb} = \Spl_{M} f_\bb(X),\quad 
L_{\ba}\,L_{\bb} = \Spl_{M} f_{\ba,\bb}(X).
\end{align*}
We define a specialization homomorphism 
$\omega_{f_{\ba,\bb}}$ by 
\begin{align*}
\omega_{f_{\ba,\bb}} : R_{\bx,\by} &\longrightarrow 
M(\alpha_1,\ldots,\alpha_n,\beta_1,\ldots,\beta_n)=L_{\ba}\,L_{\bb},\\
\Theta(\bx,\by) &\longmapsto\Theta(\alpha_1,\ldots,\alpha_n,\beta_1,\ldots,\beta_n). 
\end{align*}
We put 
\[
D_\ba:=\omega_{f_{\ba,\bb}}(\Delta_\bx^2),\quad D_\bb:=\omega_{f_{\ba,\bb}}(\Delta_\by^2).
\]
We always assume that both of the polynomials $f_\ba(X)$ and $f_\bb(X)$ are separable over $M$, 
i.e. $D_\ba\cdot D_\bb\neq 0$. 
We also put 
\begin{align*}
G_\ba:=\Gal(f_{\ba}/M),\quad G_\bb:=\Gal(f_{\bb}/M),\quad 
G_{\ba,\bb}:=\Gal(f_{\ba,\bb}/M).
\end{align*}
Then we may naturally regard $G_{\ba,\bb}$ as a subgroup of $\Gst$. 
For $\opi \in \Gst/\Hst$, we put 
\begin{align}
c_{i,\pi}:=\omega_{f_{\ba,\bb}}(\pi(u_i)),\quad 
d_{i,\pi}:=\omega_{f_{\ba,\bb}}\bigl(\pi(\iota(u_i))\bigr),\quad (i=0,\ldots,n-1).\label{defc}
\end{align}
Then it follows from the definition (\ref{defu}) of $u_i$ that 
\begin{align*}
\beta_{\pi_\by(i)}\,&=\, c_{0,\pi} + c_{1,\pi}\,\alpha_{\pi_\bx(i)}
+ \cdots + c_{n-1,\pi}\,\alpha_{\pi_\bx(i)}^{n-1},\\
\alpha_{\pi_\bx(i)}\,&=\, d_{0,\pi} + d_{1,\pi}\,\beta_{\pi_\by(i)}
+ \cdots + d_{n-1,\pi}\,\beta_{\pi_\by(i)}^{n-1}
\end{align*}
for each $i = 1, \ldots, n$. 

For each $\opi\in\Gst/\Hst$, there exists a Tschirnhausen transformation from $f_\ba(X)$ 
to $f_\bb(X)$ over the field $M(c_{0,\pi},\ldots,c_{n-1,\pi})$, and 
the $n$-tuple $(d_{0,\pi},\ldots,d_{n-1,\pi})$ gives the coefficients of a transformation 
of the inverse direction. 
From the assumption $D_\ba\cdot D_\bb\neq 0$, we see the following elementary lemmas 
(see, for example, \cite[Lemma 3.1]{HM}): 
\begin{lemma}\label{lemM}
Let $M'/M$ be a field extension. 
For $\ba,\bb \in M^n$ with $D_\ba\cdot D_\bb\neq 0$, 
if $f_\bb(X)$ is a Tschirnhausen transformation of $f_\ba(X)$ over $M'$, then $f_\ba(X)$ 
is a Tschirnhausen transformation of $f_\bb(X)$ over $M'$. 
Indeed we have $M(c_{0,\pi},\ldots,c_{n-1,\pi})=M(d_{0,\pi},\ldots,d_{n-1,\pi})$ 
for every $\opi\in\Gst/\Hst$. 
\end{lemma}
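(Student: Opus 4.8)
The plan is to establish the ``Indeed'' clause, i.e.\ the coset-by-coset equality $M(c_{0,\pi},\dots,c_{n-1,\pi})=M(d_{0,\pi},\dots,d_{n-1,\pi})$ for \emph{every} $\opi\in\Gst/\Hst$, and then to deduce the first assertion from it. For the deduction: if $f_\bb(X)=\prod_{i=1}^n\bigl(X-(e_0+e_1\alpha_i+\dots+e_{n-1}\alpha_i^{n-1})\bigr)$ with $e_j\in M'$, then, $f_\bb$ being separable, the $n$ values $e_0+e_1\alpha_i+\dots+e_{n-1}\alpha_i^{n-1}$ are exactly $\beta_1,\dots,\beta_n$ in some order, say $\beta_{\sigma(i)}=e_0+e_1\alpha_i+\dots+e_{n-1}\alpha_i^{n-1}$ for a permutation $\sigma\in\cS_n$. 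Taking $\opi$ to be the coset of $(\mathrm{id},\sigma)\in\Gst$ and comparing with \eqref{defu} — which, after applying $(\mathrm{id},\sigma)$ and $\omega_{f_{\ba,\bb}}$, gives $\beta_{\sigma(i)}=c_{0,\pi}+c_{1,\pi}\alpha_i+\dots+c_{n-1,\pi}\alpha_i^{n-1}$ — the uniqueness of the solution of a Vandermonde system (the $\alpha_i$ being distinct) forces $c_{j,\pi}=e_j\in M'$; hence $M(c_{0,\pi},\dots,c_{n-1,\pi})\subseteq M'$, so $M(d_{0,\pi},\dots,d_{n-1,\pi})\subseteq M'$ by the clause, and then the displayed identity $\alpha_{\pi_\bx(i)}=d_{0,\pi}+d_{1,\pi}\beta_{\pi_\by(i)}+\dots+d_{n-1,\pi}\beta_{\pi_\by(i)}^{n-1}$ exhibits $f_\ba(X)$ as a Tschirnhausen transformation of $f_\bb(X)$ over $M'$.

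For the clause, fix $\opi$, write $F:=M(c_{0,\pi},\dots,c_{n-1,\pi})$, and set $\phi(X):=\sum_{j=0}^{n-1}c_{j,\pi}X^j\in F[X]$ and $\psi(X):=\sum_{j=0}^{n-1}d_{j,\pi}X^j$, so that by the two displays preceding the lemma $\phi(\alpha_{\pi_\bx(i)})=\beta_{\pi_\by(i)}$ and $\psi(\beta_{\pi_\by(i)})=\alpha_{\pi_\bx(i)}$ for $i=1,\dots,n$. Consider the $n$-dimensional $F$-algebra $A:=F[X]/(f_\ba(X))$, which is \'etale because $D_\ba\neq 0$; base-changing to $\overline{M}\supseteq F$ identifies it with $\prod_{i=1}^n\overline{M}$, sending the class of $X$ to $(\alpha_1,\dots,\alpha_n)$ and the class of $\phi(X)$ to $(\phi(\alpha_1),\dots,\phi(\alpha_n))$. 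As $i$ runs over $1,\dots,n$ the index $\pi_\bx(i)$ runs over all of $1,\dots,n$, so $\{\phi(\alpha_1),\dots,\phi(\alpha_n)\}=\{\beta_1,\dots,\beta_n\}$, which are pairwise distinct since $D_\bb\neq 0$; the corresponding Vandermonde determinant is then nonzero, so $1,\phi(X),\dots,\phi(X)^{n-1}$ is an $F$-basis of $A$. In particular the class of $X$ equals $g(\phi(X))$ in $A$ for a unique $g\in F[X]$ of degree $<n$, i.e.\ $g(\phi(X))\equiv X\pmod{f_\ba(X)}$ in $F[X]$; evaluating at $X=\alpha_{\pi_\bx(i)}$ gives $g(\beta_{\pi_\by(i)})=\alpha_{\pi_\bx(i)}$ for all $i$. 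Since $\psi$ also has degree $<n$ and agrees with $g$ at the $n$ distinct points $\beta_{\pi_\by(1)},\dots,\beta_{\pi_\by(n)}$, we get $\psi=g\in F[X]$, whence $d_{j,\pi}\in F$ and $M(d_{0,\pi},\dots,d_{n-1,\pi})\subseteq M(c_{0,\pi},\dots,c_{n-1,\pi})$. The reverse inclusion follows by the symmetric argument with the roles of $f_\ba$ and $f_\bb$ (equivalently of $\phi$ and $\psi$, of the $\alpha_i$'s and $\beta_i$'s, and of the $u_i$'s and the $v_i=\iota(u_i)$'s) interchanged, using $D_\bb\neq 0$ to make $M(d_{0,\pi},\dots,d_{n-1,\pi})[X]/(f_\bb(X))$ \'etale and $D_\ba\neq 0$ to make the $\alpha_i$'s distinct.

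The argument uses only separability of $f_\ba$ and $f_\bb$ plus linear algebra over $\overline{M}$, and never the subgroup structure of $G_{\ba,\bb}$. I expect the only point requiring care to be the bookkeeping that pins down $\psi$ exactly: that the polynomial produced by the basis/generation step coincides with $\sum_{j}d_{j,\pi}X^j$ and not merely with some polynomial inducing the same root map — which is where uniqueness of interpolation through the $n$ distinct points $\beta_{\pi_\by(i)}$, hence separability of $f_\bb$, is used — and the related observation that $\pi_\bx$ and $\pi_\by$ each permute the full index set, so that $\{\phi(\alpha_i)\}_i$ is all of $\{\beta_i\}_i$. No step is expected to present a genuine difficulty.
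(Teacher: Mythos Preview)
The paper does not give a proof of this lemma in the text: it introduces it as an ``elementary lemma'' and defers to \cite[Lemma 3.1]{HM}. So there is no in-paper argument to compare against directly.

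Your proof is correct. The key computation --- that in $F[X]/(f_\ba(X))$ the element $\phi(X)$ has $n$ pairwise distinct images $\beta_1,\dots,\beta_n$ after base change to $\overline{M}$, hence $1,\phi(X),\dots,\phi(X)^{n-1}$ is an $F$-basis, so $X=g(\phi(X))$ for a unique $g\in F[X]$ of degree $<n$, which then must equal $\psi$ by Lagrange interpolation at the $n$ distinct $\beta_{\pi_\by(i)}$ --- is exactly the ``elementary'' content the paper alludes to, and your bookkeeping (that $\pi_\bx,\pi_\by$ are genuine permutations of $\{1,\dots,n\}$, so the image set is all of $\{\beta_i\}$; that separability of $f_\bb$ gives distinctness and hence uniqueness of the interpolant) is handled cleanly. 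The deduction of the first assertion from the ``Indeed'' clause via the particular coset $\overline{(\mathrm{id},\sigma)}$ is likewise correct and is the natural way to connect an arbitrary Tschirnhausen datum $(e_0,\dots,e_{n-1})\in (M')^n$ to one of the canonical tuples $(c_{0,\pi},\dots,c_{n-1,\pi})$.

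One cosmetic remark: you could shorten the argument slightly by noting that once $1,\phi(X),\dots,\phi(X)^{n-1}$ is a basis of $A$ over $F$, the minimal polynomial of $\phi(X)$ in $A$ is $f_\bb$, so the $F$-algebra map $F[Y]/(f_\bb(Y))\to A$ sending $Y\mapsto\phi(X)$ is an isomorphism; writing $X$ in the $Y$-basis already exhibits the $d_{j,\pi}$ in $F$ without a separate interpolation step. But this is the same idea packaged differently.
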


\begin{lemma}\label{lemMM}
For $\ba,\bb \in M^n$ with $D_\ba\cdot D_\bb\neq 0$, the quotient algebras 
$M[X]/(f_\ba(X))$ and $M[X]/(f_\bb(X))$ are $M$-isomorphic 
if and only if there exists $\pi\in \Gst$ such that $M=M(c_{0,\pi},\ldots,c_{n-1,\pi})$. 
\end{lemma}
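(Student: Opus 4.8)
The plan is to unwind both implications through the Tschirnhausen picture set up in the previous subsection, treating the $M$-algebra isomorphism $M[X]/(f_\ba(X)) \cong M[X]/(f_\bb(X))$ as a bijection between root sets that is ``polynomial in each variable.'' First I would reduce to the irreducible case by decomposing $f_\ba(X) = \prod_j f_\ba^{(j)}(X)$ and $f_\bb(X) = \prod_j f_\bb^{(j)}(X)$ into irreducible factors over $M$; an $M$-algebra isomorphism of the quotient algebras exists exactly when the multisets of isomorphism classes of the field factors $M[X]/(f_\ba^{(j)})$ and $M[X]/(f_\bb^{(j)})$ coincide, and by the Tschirnhausen-equivalence criterion recalled in Subsection \ref{subseTschirn} (for irreducible separable polynomials, $M[X]/(f) \cong M[X]/(g)$ over $M$ iff $f$ and $g$ are Tschirnhausen equivalent over $M$) this is equivalent to $f_\bb(X)$ being a Tschirnhausen transformation of $f_\ba(X)$ over $M$ (and conversely, by Lemma \ref{lemM}).

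For the forward direction I would argue as follows. Given the $M$-algebra isomorphism, the previous paragraph gives $c_0,\ldots,c_{n-1}\in M$ and an ordering $\beta_{\sigma(1)},\ldots,\beta_{\sigma(n)}$ of the roots of $f_\bb(X)$ with $\beta_{\sigma(i)} = c_0 + c_1\alpha_i + \cdots + c_{n-1}\alpha_i^{n-1}$ for all $i$. Comparing with the defining relation $\beta_{\pi_\by(i)} = c_{0,\pi} + c_{1,\pi}\alpha_{\pi_\bx(i)} + \cdots + c_{n-1,\pi}\alpha_{\pi_\bx(i)}^{n-1}$, I would choose $\pi = (1, \sigma)\in\Gst$ (acting trivially on the $\bx$-coordinates, by $\sigma$ on the $\by$-coordinates); then the system $\beta_{\sigma(i)} = \sum_j c_{j,\pi}\alpha_i^j$ has, by the invertibility of the Vandermonde matrix $D$ evaluated at $(\alpha_1,\ldots,\alpha_n)$ — here the hypothesis $D_\ba\neq 0$ is exactly what is needed — the unique solution $c_{j,\pi} = \omega_{f_{\ba,\bb}}(\pi(u_j))$. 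Uniqueness forces $c_{j,\pi} = c_j \in M$ for all $j$, hence $M(c_{0,\pi},\ldots,c_{n-1,\pi}) = M$. The converse direction just reverses this: if $M = M(c_{0,\pi},\ldots,c_{n-1,\pi})$ for some $\pi\in\Gst$, then the relation $\beta_{\pi_\by(i)} = \sum_j c_{j,\pi}\alpha_{\pi_\bx(i)}^j$ with all $c_{j,\pi}\in M$ exhibits $f_\bb(X)$ as a Tschirnhausen transformation of $f_\ba(X)$ over $M$, and then the quotient algebras are $M$-isomorphic by the factorwise criterion above (using $D_\bb\neq 0$ and Lemma \ref{lemM} to get the matching in both directions and to control the reducible case).

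I expect the main obstacle to be the bookkeeping in the reducible case: an abstract $M$-algebra isomorphism of the quotient algebras need not respect any a priori chosen ordering of the roots, so one must argue that after suitably permuting the $\beta$'s — i.e. after replacing $\pi$ by $\pi$ composed with an element of $\Hst$, which does not change the coset $\opi$ — the isomorphism is realized by a single tuple of ``Tschirnhausen coefficients'' simultaneously for all factors. The separability hypothesis $D_\ba\cdot D_\bb\neq 0$ is what makes the Vandermonde system solvable and the $u_i$ well-defined after specialization, so it must be invoked at precisely the point where uniqueness of the coefficients is claimed; I would be careful to flag that this is where separability enters. The rest is a routine translation between the language of resolvents (the $c_{i,\pi}$) and the language of Tschirnhausen transformations, already carried out in \cite[Lemma 3.1]{HM}, to which I would refer for the details of the one-factor computation.
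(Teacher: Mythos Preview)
Your proposal is correct and essentially matches what the paper intends. Note, however, that the paper does \emph{not} actually give a proof of this lemma: it is stated as one of the ``elementary lemmas'' with a pointer to \cite[Lemma~3.1]{HM}, so there is no in-text argument to compare against. Your approach---identifying an $M$-algebra isomorphism with a Tschirnhausen transformation over $M$, then matching the coefficients to a specific tuple $(c_{0,\pi},\ldots,c_{n-1,\pi})$ via the invertibility of the specialized Vandermonde matrix---is the standard one and is almost certainly what the cited reference contains.

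One small point you could make sharper: in the reducible case, you assert that matching multisets of field factors is equivalent to $f_\bb$ being a Tschirnhausen transformation of $f_\ba$ over $M$, but the passage from ``factorwise Tschirnhausen transformations $T_j$'' to ``a single polynomial $T$ of degree $<n$ working on all roots at once'' deserves one explicit sentence. The point is that separability of $f_\ba$ makes its irreducible factors pairwise coprime, so the Chinese Remainder Theorem in $M[X]/(f_\ba)$ lets you glue the $T_j$ into a single $T$ of degree $<n$. You correctly flag this bookkeeping as the main obstacle; naming CRT as the mechanism would close the gap cleanly.
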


In order to obtain an answer to $\mathbf{Int}(f_\bs/M)$ we study the $n!$ fields 
$M(c_{0,\pi},\ldots,c_{n-1,\pi})$ of Tschirnhausen coefficients from $f_\ba(X)$ to 
$f_\bb(X)$ over $M$. 
\begin{proposition}[{\cite[Proposition 3.2]{HM}}]\label{propc}
Under the assumption, $D_\ba\cdot D_\bb\neq 0$, we have 
the following two assertions\,{\rm :} 
\begin{align*}
&{\rm (i)}\ \ \Spl_{M(c_{0,\pi},\ldots,c_{n-1,\pi})} f_\ba(X)
=\Spl_{M(c_{0,\pi},\ldots,c_{n-1,\pi})} f_\bb(X)\, \ \textit{for each}\ \,\opi\in\Gst/\Hst\, 
{\rm ;}\\
&{\rm (ii)}\ L_\ba L_\bb=L_\ba\, M(c_{0,\pi},\ldots,c_{n-1,\pi})=L_\bb\, 
M(c_{0,\pi},\ldots,c_{n-1,\pi})\, \ \textit{for each}\ \, \opi\in\Gst/\Hst.
\end{align*}
\end{proposition}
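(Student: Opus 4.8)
The plan is to deduce everything from the two families of specialization identities recorded just after (\ref{defc}) together with Lemma \ref{lemM}, and in fact to note first that (i) follows formally from (ii). Write $N := M(c_{0,\pi},\ldots,c_{n-1,\pi})$ for the field of Tschirnhausen coefficients attached to the coset $\opi$. Since $M \subseteq N$ and the roots $\alpha_1,\ldots,\alpha_n$ of $f_\ba(X)$ generate $L_\ba$ over $M$, one has $\Spl_N f_\ba(X) = N\,L_\ba$, and likewise $\Spl_N f_\bb(X) = N\,L_\bb$; hence once (ii) is proved, the equality $N\,L_\ba = L_\ba L_\bb = N\,L_\bb$ immediately gives (i). So the whole content is the chain of field equalities in (ii).

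To establish (ii) I would argue by a pair of opposite inclusions. On one side, each $c_{i,\pi} = \omega_{f_{\ba,\bb}}(\pi(u_i))$ is the image under $\omega_{f_{\ba,\bb}}$ of the element $\pi(u_i) \in R_{\bx,\by}$, and the image of $\omega_{f_{\ba,\bb}}$ is contained in $L_\ba L_\bb$; therefore $N \subseteq L_\ba L_\bb$, and consequently both $N\,L_\ba$ and $N\,L_\bb$ lie inside $L_\ba L_\bb$. On the other side, the identities displayed immediately after (\ref{defc}),
\[
\beta_{\pi_\by(i)} = c_{0,\pi} + c_{1,\pi}\,\alpha_{\pi_\bx(i)} + \cdots + c_{n-1,\pi}\,\alpha_{\pi_\bx(i)}^{\,n-1} \qquad (i = 1,\ldots,n),
\]
exhibit every $\beta_j$ as an element of $N\,L_\ba$, so $L_\bb \subseteq N\,L_\ba$ and hence $L_\ba L_\bb \subseteq N\,L_\ba$; combined with the first inclusion this forces $L_\ba L_\bb = N\,L_\ba$. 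For the remaining equality $L_\ba L_\bb = N\,L_\bb$ I would invoke Lemma \ref{lemM}, which identifies $N = M(d_{0,\pi},\ldots,d_{n-1,\pi})$, and then repeat the same reasoning with the companion identities $\alpha_{\pi_\bx(i)} = d_{0,\pi} + d_{1,\pi}\,\beta_{\pi_\by(i)} + \cdots + d_{n-1,\pi}\,\beta_{\pi_\by(i)}^{\,n-1}$, which put each $\alpha_j$ into $N\,L_\bb$ and give $L_\ba \subseteq N\,L_\bb$.

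The only point needing care at the outset — not a genuine obstacle — is well-definedness: $c_{i,\pi}$, $d_{i,\pi}$, the field $N$, and all the relations above should depend on the coset $\opi \in \Gst/\Hst$ and not on the chosen representative $\pi$. This is immediate from Lemma \ref{stabil} (and from the fact, noted in the text, that $\iota(u_i)$ is again a $\Gst$-primitive $\Hst$-invariant), since $\Hst$ fixes each $u_i$ and each $\iota(u_i)$. Apart from that, the argument is essentially the specialization along $\omega_{f_{\ba,\bb}}$ of Proposition \ref{propLL}(ii), the separability hypothesis $D_\ba\cdot D_\bb \neq 0$ being used only through the appeal to Lemma \ref{lemM}; I expect no calculation beyond the manipulations indicated.
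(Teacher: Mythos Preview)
The paper does not actually prove this proposition; it is quoted from \cite[Proposition 3.2]{HM} without argument, so there is no in-paper proof to compare against. Your argument is correct and is essentially the natural one: the identities following (\ref{defc}) give $L_\bb\subseteq N\,L_\ba$ and, via Lemma \ref{lemM}, $L_\ba\subseteq N\,L_\bb$, while $N\subseteq L_\ba L_\bb$ is automatic from the definition of $\omega_{f_{\ba,\bb}}$; your reduction of (i) to (ii) is also fine.
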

Let $\Theta$ be a $\Gst$-primitive $\Hst$-invariant. 
Applying the specialization $\omega_{f_{\ba,\bb}}$, 
we have a $\Gst$-relative $\Hst$-invariant resolvent polynomial of $f_{\ba,\bb}$ by $\Theta$: 
\begin{align*}
\RP_{\Theta,\Gst,f_{\ba,\bb}}(X)\, =\, \prod_{\opi\in \Gst/\Hst} 
\bigl(X-\omega_{f_{\ba,\bb}}(\pi(\Theta))\bigr)\in M[X]. 
\end{align*}
A polynomial of this kind is called (absolute) multi-resolvent (cf. \cite{GLV88}, 
\cite{RV99}, \cite{Val}). 
\begin{proposition}[{\cite[Proposition 3.7]{HM}}]\label{prop12}
Let $\Theta$ be a $\Gst$-primitive $\Hst$-invariant. 
For $\ba,\bb \in M^n$ with $D_\ba\cdot D_\bb\neq 0$, suppose that the resolvent 
polynomial $\RP_{\Theta,\Gst,f_{\ba,\bb}}(X)$ has no repeated factors. 
Then the following two assertions hold\,{\rm :}\\
$(\mathrm{i})$\ $M(c_{0,\pi},\ldots,c_{n-1,\pi})=M\bigl(\omega_{f_{\ba,\bb}}(\pi(\Theta))\bigr)$ 
for each $\opi\in\Gst/\Hst$\,{\rm ;}\\
$(\mathrm{ii})$\ ${\rm Spl}_M f_{\ba,\bb}(X)
=M(\omega_{f_{\ba,\bb}}(\pi(\Theta))\ |\ \opi\in\Gst/\Hst)$. 
\end{proposition}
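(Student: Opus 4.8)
The plan is to prove Proposition \ref{prop12} by invoking the general resolvent machinery of Subsection \ref{subseResolv} with $m=2n$, $G=\Gst\leq\cS_{2n}$, $H=\Hst$, and $f=f_{\ba,\bb}$, combined with the structural facts about the fields $M(c_{0,\pi},\dots,c_{n-1,\pi})$ proved in Proposition \ref{propc}. First I would observe that since $\RP_{\Theta,\Gst,f_{\ba,\bb}}(X)$ has no repeated factors by hypothesis, the hypotheses of the last theorem of Subsection \ref{subseResolv} are met (using $\Gal(f_{\ba,\bb}/M)=\Gab\leq\Gst$). That theorem, part (i), then identifies the fixed group of $M\bigl(\omega_{f_{\ba,\bb}}(\pi(\Theta))\bigr)$ inside $\Gab$ as $\Gab\cap\pi\Hst\pi^{-1}$. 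So the Galois-theoretic content of (i) reduces to showing that the field $M(c_{0,\pi},\dots,c_{n-1,\pi})$ has the \emph{same} fixed group $\Gab\cap\pi\Hst\pi^{-1}$ inside the Galois group of $\Spl_M f_{\ba,\bb}(X)=L_\ba L_\bb$.

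For that identification I would argue as follows. By Proposition \ref{prop1} in the generic setting we have $k(\bx,\by)^{\pi\Hst\pi^{-1}}=K(\pi(u_0),\dots,\pi(u_{n-1}))=K(\pi(\Theta))$; specializing via $\omega_{f_{\ba,\bb}}$ and using that the $c_{i,\pi}$ are precisely the images $\omega_{f_{\ba,\bb}}(\pi(u_i))$ and $\omega_{f_{\ba,\bb}}(\pi(\Theta))$ is the corresponding image of $\Theta$, one expects $M(c_{0,\pi},\dots,c_{n-1,\pi})\subseteq M\bigl(\omega_{f_{\ba,\bb}}(\pi(\Theta))\bigr)$ and conversely, because $\Theta$ is a polynomial in the $u_i$ (being a $\Gst$-primitive $\Hst$-invariant, it is expressible via the $\pi(u_i)$ over $K$ by Proposition \ref{prop1}), so $\omega_{f_{\ba,\bb}}(\pi(\Theta))\in M(c_{0,\pi},\dots,c_{n-1,\pi})$. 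Hence the two fields coincide, which is exactly (i). Alternatively, and perhaps more cleanly, I would combine Proposition \ref{propc}(ii), which gives $L_\ba L_\bb=L_\ba\,M(c_{0,\pi},\dots,c_{n-1,\pi})$, with the degree/fixed-group statement from the resolvent theorem: the no-repeated-factor condition forces $[M(\omega_{f_{\ba,\bb}}(\pi(\Theta))):M]=|\Gab|/|\Gab\cap\pi\Hst\pi^{-1}|$, and $M(c_{0,\pi},\dots,c_{n-1,\pi})$ sits between $M$ and $L_\ba L_\bb$ with the compositum property, pinning its degree to the same value and forcing equality with $M(\omega_{f_{\ba,\bb}}(\pi(\Theta)))$.

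For part (ii), once (i) is established, I would apply Proposition \ref{propLL}(iii) in the generic case, which says $k(\bx,\by)=K(\pi(\Theta)\mid\opi\in\Gst/\Hst)$, and specialize: the compositum of all the $M\bigl(\omega_{f_{\ba,\bb}}(\pi(\Theta))\bigr)$ over $\opi\in\Gst/\Hst$ is the splitting field of $\RP_{\Theta,\Gst,f_{\ba,\bb}}(X)$. It then remains to identify $\Spl_M\RP_{\Theta,\Gst,f_{\ba,\bb}}(X)$ with $\Spl_M f_{\ba,\bb}(X)=L_\ba L_\bb$. The inclusion $\subseteq$ is immediate since each $\omega_{f_{\ba,\bb}}(\pi(\Theta))$ lies in $L_\ba L_\bb$; the reverse inclusion follows because $\bigcap_{\pi\in\Gst}\pi\Hst\pi^{-1}=1$ (the regular action of $\Gst$ on $\Gst/\Hst$ is faithful, as $\Hst$ is core-free — indeed $\Hst\cong\cS_n$ is a ``diagonal'' subgroup of $\cS_n\times\cS_n$ meeting every nontrivial normal subgroup trivially), so part (i) of the resolvent theorem gives that the fixed group of $\Spl_M\RP_{\Theta,\Gst,f_{\ba,\bb}}(X)$ is $\Gab\cap\bigcap_\pi\pi\Hst\pi^{-1}=1$, i.e. the splitting field is all of $L_\ba L_\bb$.

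The main obstacle I anticipate is the no-repeated-factor bookkeeping: one must be careful that the hypothesis ``$\RP_{\Theta,\Gst,f_{\ba,\bb}}(X)$ has no repeated factors'' is genuinely what lets us pass from the \emph{formal} identities of Propositions \ref{prop1} and \ref{propLL} (valid over $K=k(\bs,\bt)$) to the \emph{specialized} ones over $M$ — in the presence of repeated factors the specialization can collapse degrees and the equality $M(c_{0,\pi},\dots,c_{n-1,\pi})=M\bigl(\omega_{f_{\ba,\bb}}(\pi(\Theta))\bigr)$ need not persist. Concretely, the subtlety is that $\Theta$ being a polynomial in the $u_i$ gives one inclusion unconditionally, but the reverse inclusion (that each $c_{i,\pi}$ lies in $M(\omega_{f_{\ba,\bb}}(\pi(\Theta)))$) is where the separability of the resolvent is needed, via the degree count from Theorem \ref{thfun} applied to the single coset $\opi$. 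Everything else is a direct transcription of the generic-case propositions through $\omega_{f_{\ba,\bb}}$ together with the faithfulness of the $\Gst$-action on $\Gst/\Hst$.
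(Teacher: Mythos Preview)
The paper does not actually prove Proposition~\ref{prop12}; it is quoted as \cite[Proposition~3.7]{HM} in the preliminaries, so there is no in-paper argument to compare against. Your outline is the natural one and is essentially correct, but two steps need tightening.

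First, the phrase ``$\Theta$ is a polynomial in the $u_i$'' is not what Proposition~\ref{prop1} hands you: it only gives $\pi(\Theta)\in K(\pi(u_0),\dots,\pi(u_{n-1}))$, and the coefficients of any such $K$-expression are rational functions in $\bs,\bt$ whose denominators could vanish at $(\ba,\bb)$. The robust route is to work entirely through fixed groups. From the Tschirnhausen relation $\beta_{\pi_\by(j)}=\sum_i c_{i,\pi}\,\alpha_j^i$ and the separability $D_\ba D_\bb\neq 0$, one checks directly that any $\sigma=(\sigma_x,\sigma_y)\in\Gab$ fixing every $c_{i,\pi}$ must satisfy $\sigma_y\pi_\by=\pi_\by\sigma_x$, i.e.\ $\sigma\in\pi\Hst\pi^{-1}$. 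Hence the fixed group of $M(c_{0,\pi},\dots,c_{n-1,\pi})$ is \emph{exactly} $\Gab\cap\pi\Hst\pi^{-1}$, with no hypothesis beyond $D_\ba D_\bb\neq 0$. The no-repeated-factor assumption then enters only to force the fixed group of $M(\omega_{f_{\ba,\bb}}(\pi(\Theta)))$ down to the same subgroup via the resolvent theorem, and (i) follows by Galois correspondence. Your ``alternative'' degree count through Proposition~\ref{propc}(ii) alone does not pin down the degree of $M(c_{0,\pi},\dots,c_{n-1,\pi})$ without this computation.

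Second, your justification for core-freeness is wrong as stated: the diagonal $\Hst$ does \emph{not} meet every nontrivial normal subgroup of $\cS_n\times\cS_n$ trivially (e.g.\ $\Hst\cap(\A_n\times\A_n)$ is the diagonal $\A_n$), and in any case that is not the definition of core-free. What you need is $\bigcap_{\pi}\pi\Hst\pi^{-1}=1$; since $(1,\pi_y)\Hst(1,\pi_y)^{-1}=\{(\tau,\pi_y\tau\pi_y^{-1}):\tau\in\cS_n\}$, intersecting over all $\pi_y$ forces $\tau$ into the centre of $\cS_n$, which is trivial for $n\geq 3$. (For $n=2$ the core equals $\Hst$ and assertion~(ii) is in fact false; the paper only applies the result with $n=4$.) With this correction your argument for (ii) goes through.
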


We also get the followings (see, for example, \cite[Proposition 3.12, Corollary 3.13]{HM}):

\begin{proposition}\label{propAn}
For $\ba,\bb \in M^n$ with $D_\ba\cdot D_\bb\neq 0$, 
if $\sqrt{D_\ba\cdot D_\bb}\in M$ then the polynomial $\RP_{\Theta,\Gst,f_{\ba,\bb}}$ 
splits into two factors of degree $n!/2$ over $M$ which are not necessary irreducible.
\end{proposition}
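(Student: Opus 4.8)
The plan is to exhibit the two factors as the ``even'' and ``odd'' halves of the resolvent, the splitting being governed by the index-two subgroup of $\Gst$ that stabilizes $\Delta_\bx\Delta_\by$. Set
\[
\Gst^{+}:=\{(\pi_\bx,\pi_\by)\in\Gst \mid \mathrm{sgn}(\pi_\bx)\,\mathrm{sgn}(\pi_\by)=1\},
\]
a normal subgroup of index $2$ in $\Gst$; since $(\pi_\bx,\pi_\by)$ multiplies $\Delta_\bx$ by $\mathrm{sgn}(\pi_\bx)$ and $\Delta_\by$ by $\mathrm{sgn}(\pi_\by)$, one has $\Gst^{+}=\mathrm{Stab}_{\Gst}(\Delta_\bx\Delta_\by)$. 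Because every element of $\Hst$ is of the form $(\pi,\pi)$, it has sign-product $\mathrm{sgn}(\pi)^2=1$, so $\Hst\le\Gst^{+}$; in particular $\Gst^{+}$ is a union of left $\Hst$-cosets, namely $[\Gst^{+}:\Hst]=(n!)^2/(2\cdot n!)=n!/2$ of them, and so is its complement $\Gst\setminus\Gst^{+}$.

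First I would check that the hypothesis forces $\Gab\le\Gst^{+}$. We have $\omega_{f_{\ba,\bb}}(\Delta_\bx\Delta_\by)^2=D_\ba D_\bb$, so $\omega_{f_{\ba,\bb}}(\Delta_\bx\Delta_\by)=\pm\sqrt{D_\ba D_\bb}\in M$, and it is nonzero as $D_\ba D_\bb\ne 0$ (recall $\mathrm{char}\,k\ne 2$). If $\rho\in\Gab$ has image $\pi\in\Gst$, then $\rho$ carries $\omega_{f_{\ba,\bb}}(\Delta_\bx\Delta_\by)$ to $\omega_{f_{\ba,\bb}}(\pi(\Delta_\bx\Delta_\by))$, which is $+\omega_{f_{\ba,\bb}}(\Delta_\bx\Delta_\by)$ if $\pi\in\Gst^{+}$ and $-\omega_{f_{\ba,\bb}}(\Delta_\bx\Delta_\by)$ otherwise; since this element lies in $M$ it is fixed by $\rho$, and being nonzero in characteristic $\ne 2$ it cannot equal its negative, so $\pi\in\Gst^{+}$. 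Hence $\Gab\le\Gst^{+}$. Now apply Theorem~\ref{thfun} with $m=2n$, $G=\Gst$, $H=\Hst$ and $f=f_{\ba,\bb}$: it yields a factorization $\RP_{\Theta,\Gst,f_{\ba,\bb}}(X)=\prod_{\opi}h_\pi(X)$ in $M[X]$ indexed by the double cosets $\opi\in\Gab\backslash\Gst/\Hst$, with $\deg h_\pi$ equal to the number of left $\Hst$-cosets in $\Gab\pi\Hst$. Since $\Gab\le\Gst^{+}$ and $\Hst\le\Gst^{+}$, each double coset lies entirely in $\Gst^{+}$ or entirely in $\Gst\setminus\Gst^{+}$; grouping the factors accordingly and setting $F_1:=\prod_{\Gab\pi\Hst\subseteq\Gst^{+}}h_\pi$ and $F_2:=\prod_{\Gab\pi\Hst\not\subseteq\Gst^{+}}h_\pi$, we obtain $F_1,F_2\in M[X]$ with $\RP_{\Theta,\Gst,f_{\ba,\bb}}=F_1F_2$ and $\deg F_1=\deg F_2=n!/2$ by the coset count above. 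The $F_i$ need not be irreducible (indeed each $h_\pi$ may itself be a proper prime power), matching the statement.

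The only genuinely delicate point is the implication ``$\sqrt{D_\ba D_\bb}\in M\Rightarrow\Gab\le\Gst^{+}$'' together with the observation $\Hst\le\Gst^{+}$, which is exactly what makes $\Gst^{+}$ a union of $(\Gab,\Hst)$-double cosets and renders the degree bookkeeping exact; the remainder is a direct application of Theorem~\ref{thfun}. As an alternative route avoiding the double-coset combinatorics, one can argue on the formal side: since $\Hst\le\Gst^{+}$, the element $\Theta$ is also a $\Gst^{+}$-primitive $\Hst$-invariant, the polynomial $\RP_{\Theta,\Gst^{+}}(X):=\prod_{\opi\in\Gst^{+}/\Hst}(X-\pi(\Theta))$ has degree $n!/2$ with coefficients in $R_{\bx,\by}^{\Gst^{+}}=R_{\bx,\by}^{\Gst}+R_{\bx,\by}^{\Gst}\,\Delta_\bx\Delta_\by$ (using $\mathrm{char}\,k\ne 2$), and $\RP_{\Theta,\Gst}=\RP_{\Theta,\Gst^{+}}\cdot{}^{\sigma}\!\RP_{\Theta,\Gst^{+}}$ for any $\sigma\in\Gst\setminus\Gst^{+}$; applying $\omega_{f_{\ba,\bb}}$ sends the coefficients of both factors into $M+M\sqrt{D_\ba D_\bb}=M$, producing the two degree-$n!/2$ factors over $M$ directly.
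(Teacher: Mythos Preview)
Your proof is correct. The paper itself does not include a self-contained argument here (it cites the authors' earlier work \cite[Proposition~3.12]{HM}), but the mechanism is displayed explicitly for $n=4$ in Section~\ref{seS4A4}: equation~(\ref{polyR}) writes the resolvent as $(G_{\bs,\bs'}^1)^2-D_\bs D_{\bs'}(G_{\bs,\bs'}^2)^2$, which then factors as $(G_{\bs,\bs'}^1+\sqrt{D_\bs D_{\bs'}}\,G_{\bs,\bs'}^2)(G_{\bs,\bs'}^1-\sqrt{D_\bs D_{\bs'}}\,G_{\bs,\bs'}^2)$ over $k(\bs,\bs')(\sqrt{D_\bs D_{\bs'}})$. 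That is exactly your ``alternative route'': the invariant ring of the index-two subgroup $\Gst^{+}$ is $R_{\bx,\by}^{\Gst}\oplus R_{\bx,\by}^{\Gst}\cdot\Delta_\bx\Delta_\by$, so each coefficient of the half-resolvent has the form $A_i+\Delta_\bx\Delta_\by\,B_i$ with $A_i,B_i\in R_{\bx,\by}^{\Gst}$, and under $\omega_{f_{\ba,\bb}}$ lands in $M$ once $\sqrt{D_\ba D_\bb}\in M$.

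Your main argument via Theorem~\ref{thfun} and double cosets is a clean group-theoretic repackaging of the same idea: it trades the explicit ``$A^2-DB^2$'' computation for the single observation $\Gab,\Hst\le\Gst^{+}$, which immediately forces the $(\Gab,\Hst)$-double cosets to refine the partition $\Gst=\Gst^{+}\sqcup(\Gst\setminus\Gst^{+})$. This approach makes the degree count $n!/2$ transparent without any ring-of-invariants input, at the cost of saying slightly less about the concrete shape of the two factors (the difference-of-squares form). Either route is fine; they are two faces of the same Galois-theoretic fact.
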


\begin{corollary}\label{corAn}
For $\ba,\bb \in M^n$ with $D_\ba\cdot D_\bb\neq 0$, 
if $G_\ba$, $G_\bb\subset \A_n$ then $\RP_{\Theta,\Gst,f_{\ba,\bb}}$ 
splits into two factors of degree $n!/2$ which are not necessary irreducible. 
\end{corollary}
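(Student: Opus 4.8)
The plan is to deduce Corollary \ref{corAn} directly from Proposition \ref{propAn}. First I would observe that the quantity $D_\ba\cdot D_\bb = \omega_{f_{\ba,\bb}}(\Delta_\bx^2\Delta_\by^2)$ is, up to the fixed choice of orderings, the product of the discriminants of $f_\ba(X)$ and $f_\bb(X)$, so $\sqrt{D_\ba\cdot D_\bb} = \pm\,\omega_{f_{\ba,\bb}}(\Delta_\bx\Delta_\by)$ in $\overline{M}$. The key step is then to recall the standard fact that the Galois group $G_\ba = \Gal(f_\ba/M)$, realized as a subgroup of $\cS_n$ via the chosen ordering $\alpha_1,\ldots,\alpha_n$, is contained in the alternating group $\A_n$ if and only if $\omega_{f_\ba}(\Delta_\bx) = \prod_{i<j}(\alpha_j-\alpha_i)$ is fixed by $G_\ba$, equivalently lies in $M$; and likewise for $f_\bb$.

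Carrying this out: suppose $G_\ba\subset \A_n$ and $G_\bb\subset \A_n$. Every $\sigma\in G_{\ba,\bb}\leq \Gs\times\Gt$ restricts to an even permutation on the $\alpha$'s and an even permutation on the $\beta$'s, hence fixes $\omega_{f_{\ba,\bb}}(\Delta_\bx)$ and $\omega_{f_{\ba,\bb}}(\Delta_\by)$ separately, and therefore fixes their product $\omega_{f_{\ba,\bb}}(\Delta_\bx\Delta_\by)$. Since $\Spl_M f_{\ba,\bb}(X) = L_\ba L_\bb$ is Galois over $M$ with group $G_{\ba,\bb}$, an element of $L_\ba L_\bb$ fixed by all of $G_{\ba,\bb}$ lies in $M$; thus $\omega_{f_{\ba,\bb}}(\Delta_\bx\Delta_\by)\in M$, and squaring gives $\sqrt{D_\ba\cdot D_\bb}\in M$. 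Now Proposition \ref{propAn} applies verbatim and yields that $\RP_{\Theta,\Gst,f_{\ba,\bb}}$ splits into two factors of degree $n!/2$ over $M$, which need not be irreducible. This is exactly the assertion of the corollary.

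I do not expect any serious obstacle here: the corollary is a formal specialization of the proposition, and the only content is the classical criterion relating $\A_n$ to the square root of the discriminant, together with the fact that $\Delta_\bx\Delta_\by$ is fixed precisely by those elements of $\Gst$ whose $\bx$- and $\by$-components are each even. The one point requiring a little care is that the criterion $G_\ba\subset\A_n \iff \sqrt{D_\ba}\in M$ presupposes $\mathrm{char}\,M\neq 2$, which holds by the blanket hypothesis $\mathrm{char}\,k\neq 2$, and that $f_\ba(X)$ is separable, which is guaranteed by $D_\ba\cdot D_\bb\neq 0$. With these in place the argument is complete.
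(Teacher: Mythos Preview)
Your proof is correct and is precisely the intended argument: the paper states Corollary~\ref{corAn} as an immediate consequence of Proposition~\ref{propAn} (both are cited from \cite{HM} without proof here), and the only content needed to pass from one to the other is the classical discriminant criterion $G_\ba\subset\A_n\iff\sqrt{D_\ba}\in M$ in characteristic $\neq 2$, which you have supplied. If anything, your argument via $G_{\ba,\bb}$ fixing $\omega_{f_{\ba,\bb}}(\Delta_\bx\Delta_\by)$ is slightly more elaborate than necessary---one can simply note $\sqrt{D_\ba},\sqrt{D_\bb}\in M$ separately and multiply---but this is a matter of taste, not substance.
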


\begin{definition}
For a separable polynomial $f(X)\in k[X]$ of degree $d$, the decomposition type of $f(X)$ 
over $M$, denoted by {\rm DT}$(f/M)$, is defined as the partition of $d$ induced by the 
degrees of the irreducible factors of $f(X)$ over $M$. 
We define the decomposition type {\rm DT}$(\RP_{\Theta,G,f}/M)$ of 
$\RP_{\Theta,G,f}(X)$ over $M$ by {\rm DT}$(\RP_{\Theta,G,\hat{f}}/M)$ where 
$\hat{f}(X)$ is a Tschirnhausen transformation of $f(X)$ over $M$ which satisfies that 
$\RP_{\Theta,G,\hat{f}}(X)$ has no repeated factors (cf. Remark \ref{remGir}). 
\end{definition}

We write $\mathrm{DT}(f):=\mathrm{DT}(f/M)$ for simplicity. 
From Theorem \ref{thfun}, the decomposition type 
$\mathrm{DT}(\RP_{\Theta,\Gst,f_{\ba,\bb}})$ coincides with the partition of $n!$ 
induced by the lengths of the orbits of $\Gst/\Hst$ under the action of $\Gal(f_{\ba,\bb})$. 
Hence, by Proposition \ref{prop12}, $\mathrm{DT}(\RP_{\Theta,\Gst,f_{\ba,\bb}})$ 
gives the degrees of $n!$ fields of Tschirnhausen coefficients $M(c_{0,\pi},\ldots,c_{n-1,\pi})$ 
from $f_\ba(X)$ to $f_\bb(X)$ over $M$; the degree of $M(c_{0,\pi},\ldots,c_{n-1,\pi})$ 
over $M$ is equal to $|\Gal(f_{\ba,\bb})|/|\Gal(f_{\ba,\bb})\cap\pi\Hst\pi^{-1}|$. 

We conclude that the decomposition type of the resolvent polynomial 
$\RP_{\Theta,\Gst,f_{\ba,\bb}}(X)$ over $M$ gives us information 
about the field intersection problem for $f_\bs(X)$ through the degrees of the fields 
of Tschirnhausen coefficients $M(c_{0,\pi},\ldots,c_{n-1,\pi})$ over $M$ 
which is determined by the degeneration of the Galois group $\Gal(f_{\ba,\bb})$ 
under the specialization $(\bs, \bt) \mapsto (\ba, \bb)$. 

\begin{theorem}[{\cite[Theorem 3.8]{HM}}]\label{throotf}
Let $\Theta$ be a $\Gst$-primitive $\Hst$-invariant. 
For $\ba,\bb \in M^n$ with $D_\ba\cdot D_\bb\neq 0$, 
the following conditions are equivalent\,{\rm :}\\
{\rm (i)} The quotient algebras $M[X]/(f_\ba(X))$ and $M[X]/(f_\bb(X))$ are 
$M$-isomorphic\,{\rm ;}\\
{\rm (ii)} The decomposition type ${\rm  DT}(\RP_{\Theta,\Gst,f_{\ba,\bb}})$ 
over $M$ includes $1$.
\end{theorem}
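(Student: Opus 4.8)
The plan is to prove that each of (i) and (ii) is equivalent to the single group-theoretic statement that there exists $\pi\in\Gst$ with $\Gal(f_{\ba,\bb})\subseteq\pi\Hst\pi^{-1}$, where throughout $\Gal(f_{\ba,\bb})$ is regarded as a subgroup of $\Gst\leq\cS_{2n}$ via the fixed ordering of the roots of $f_{\ba,\bb}(X)$ (as in the discussion preceding the statement). Once both conditions are translated into this common form, the theorem is immediate.

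First I would dispose of (ii). By Theorem \ref{thfun} — exactly as recorded in the paragraph before the statement — $\mathrm{DT}(\RP_{\Theta,\Gst,f_{\ba,\bb}}/M)$ is the partition of $n!$ whose parts are the lengths of the orbits of $\Gst/\Hst$ under the left-translation action of $\Gal(f_{\ba,\bb})$. A coset $\pi\Hst$ lies in an orbit of length $1$ exactly when $\sigma\pi\Hst=\pi\Hst$ for every $\sigma\in\Gal(f_{\ba,\bb})$, i.e.\ precisely when $\Gal(f_{\ba,\bb})\subseteq\pi\Hst\pi^{-1}$. Hence $\mathrm{DT}(\RP_{\Theta,\Gst,f_{\ba,\bb}}/M)$ contains a part equal to $1$ if and only if such a $\pi$ exists, which is the claimed group condition.

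Next I would treat (i). By Lemma \ref{lemMM}, condition (i) holds if and only if $M(c_{0,\pi},\ldots,c_{n-1,\pi})=M$ for some $\pi\in\Gst$, i.e.\ $[M(c_{0,\pi},\ldots,c_{n-1,\pi}):M]=1$. The key point is then the identification, for every $\opi\in\Gst/\Hst$, of $M(c_{0,\pi},\ldots,c_{n-1,\pi})$ with the fixed field $\bigl(\Spl_M f_{\ba,\bb}\bigr)^{\Gal(f_{\ba,\bb})\cap\pi\Hst\pi^{-1}}$ inside the Galois extension $\Spl_M f_{\ba,\bb}(X)/M$; by the Galois correspondence this yields $[M(c_{0,\pi},\ldots,c_{n-1,\pi}):M]=|\Gal(f_{\ba,\bb})|/|\Gal(f_{\ba,\bb})\cap\pi\Hst\pi^{-1}|$, which equals $1$ precisely when $\Gal(f_{\ba,\bb})\subseteq\pi\Hst\pi^{-1}$, and combining with the previous paragraph finishes the proof. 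For the inclusion $M(c_{0,\pi},\ldots,c_{n-1,\pi})\subseteq\bigl(\Spl_M f_{\ba,\bb}\bigr)^{\Gal(f_{\ba,\bb})\cap\pi\Hst\pi^{-1}}$ I would use that $c_{i,\pi}=\omega_{f_{\ba,\bb}}(\pi(u_i))$ with $\pi(u_i)$ a $\pi\Hst\pi^{-1}$-invariant (Lemma \ref{stabil}), together with the equivariance $\sigma(\omega_{f_{\ba,\bb}}(\Phi))=\omega_{f_{\ba,\bb}}(\sigma(\Phi))$ of the specialization. For the reverse inclusion I would take $\sigma\in\Gal(f_{\ba,\bb})$ fixing every $c_{i,\pi}$, apply $\sigma$ to the Tschirnhausen relations $\beta_{\pi_\by(i)}=\sum_{j=0}^{n-1}c_{j,\pi}\,\alpha_{\pi_\bx(i)}^{\,j}$, and use separability (distinctness of the $\alpha_i$ and of the $\beta_i$) to conclude that $\pi^{-1}\sigma\pi$ acts by the same permutation on the $\bx$-indices and the $\by$-indices, i.e.\ $\pi^{-1}\sigma\pi\in\Hst$, so $\sigma\in\Gal(f_{\ba,\bb})\cap\pi\Hst\pi^{-1}$.

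The main obstacle will be establishing the degree formula $[M(c_{0,\pi},\ldots,c_{n-1,\pi}):M]=|\Gal(f_{\ba,\bb})|/|\Gal(f_{\ba,\bb})\cap\pi\Hst\pi^{-1}|$ in full generality. When $\RP_{\Theta,\Gst,f_{\ba,\bb}}(X)$ has no repeated factors it is immediate from Proposition \ref{prop12} together with Theorem \ref{thfun}: the degree in question is then the degree of the irreducible factor $h_\pi$ attached to the double coset of $\pi$. In the presence of repeated factors one could try to pass, via Remark \ref{remGir}, to a Tschirnhausen transformation $\hat f$ of $f_{\ba,\bb}$ over $M$ with $\RP_{\Theta,\Gst,\hat f}(X)$ squarefree; but such an $\hat f$ need not be a product of a transformation of $f_\ba$ and one of $f_\bb$, so Lemma \ref{lemMM} does not transfer cleanly, and I would instead run the direct Galois-theoretic argument of the preceding paragraph, which uses no squarefreeness hypothesis. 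With the degree formula in hand, the remainder is just the double-coset bookkeeping and the Galois correspondence already spelled out above.
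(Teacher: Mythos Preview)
Your proof is correct and follows the line the paper itself sketches in the paragraph immediately preceding the theorem (the result is quoted from \cite{HM} rather than proved here): reduce (ii) to the orbit condition $\Gal(f_{\ba,\bb})\le\pi\Hst\pi^{-1}$ via Theorem~\ref{thfun}, and reduce (i) to the same condition via Lemma~\ref{lemMM} together with the degree formula for $M(c_{0,\pi},\ldots,c_{n-1,\pi})$. The only place you deviate is in justifying that degree formula. The paper's route would be to invoke Proposition~\ref{prop12} after passing to a squarefree resolvent as in Remark~\ref{remGir}; your concern that a Tschirnhausen transform $\hat f$ of the product $f_{\ba,\bb}$ need not respect the factorization $f_\ba\cdot f_\bb$ is legitimate, but note that Remark~\ref{remGir} equally allows one to replace $\Theta$ by a suitable $\hat\Theta$ while leaving $f_{\ba,\bb}$ fixed, and then Proposition~\ref{prop12} and Lemma~\ref{lemMM} apply with no conflict. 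Your direct computation of the fixing subgroup of $M(c_{0,\pi},\ldots,c_{n-1,\pi})$ from the Tschirnhausen relations and separability is nonetheless a clean alternative that avoids the squarefreeness hypothesis entirely; it is in effect the specialized version of the argument behind Proposition~\ref{prop1}.
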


In the case where $G_\ba$ and $G_\bb$ are isomorphic to a transitive subgroup $G$ of $\cS_n$ 
and every subgroups of $G$ with index $n$ are conjugate in $G$, the condition that 
the quotient algebras $M[X]/(f_\ba(X))$ and $M[X]/(f_\bb(X))$ are $M$-isomorphic 
is equivalent to the condition that $\Spl_M f_\ba(X)$ and $\Spl_M f_\bb(X)$ coincide. 
Hence we obtain an answer to the field isomorphism problem via 
the resolvent polynomial $\RP_{\Theta,\Gst,f_{\ba,\bb}}(X)$. 
\begin{corollary}[An answer to $\mathbf{Isom}(f_\bs^G/M)$]\label{cor1}
For $\ba,\bb \in M^n$ with $D_\ba\cdot D_\bb\neq 0$, we assume that 
both of $f_\ba(X)$ and $f_\bb(X)$ are irreducible over $M$, that $G_\ba$ and $G_\bb$ are 
isomorphic to $G$ and that all subgroups of $G$ with index $n$ are conjugate in $G$. 
Then ${\rm  DT}(\RP_{\Theta,\Gst,f_{\ba,\bb}})$ includes $1$ if and only if 
$\Spl_M f_\ba(X)$ and $\Spl_M f_\bb(X)$ coincide. 
\end{corollary}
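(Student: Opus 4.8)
The plan is to reduce the claim, by means of Theorem \ref{throotf}, to a purely Galois-theoretic statement about stem fields and splitting fields, and then to settle that statement with the Galois correspondence together with the hypothesis that all subgroups of $G$ of index $n$ are conjugate. First I would observe that, since $f_\ba(X)$ and $f_\bb(X)$ are irreducible and separable over $M$ (recall $D_\ba\cdot D_\bb\neq 0$), the quotient algebras $M[X]/(f_\ba(X))$ and $M[X]/(f_\bb(X))$ are in fact the separable stem fields $M(\alpha_1)$ and $M(\beta_1)$, each of degree $n$ over $M$, and the Galois (= normal) closure of $M(\alpha_1)/M$ inside $\overline{M}$ is $M(\alpha_1,\ldots,\alpha_n)=L_\ba=\Spl_M f_\ba(X)$, and likewise for $\bb$. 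By Theorem \ref{throotf}, $\mathrm{DT}(\RP_{\Theta,\Gst,f_{\ba,\bb}})$ includes $1$ if and only if $M(\alpha_1)$ and $M(\beta_1)$ are $M$-isomorphic, so it suffices to prove that $M(\alpha_1)$ and $M(\beta_1)$ are $M$-isomorphic if and only if $L_\ba=L_\bb$.

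For the ``only if'' direction I would extend a given $M$-isomorphism $M(\alpha_1)\to M(\beta_1)$ to an $M$-automorphism $\sigma$ of $\overline{M}$. Such a $\sigma$ carries the normal closure of $M(\alpha_1)/M$ onto that of $M(\beta_1)/M$, i.e.\ $\sigma(L_\ba)=L_\bb$; since $L_\ba/M$ is Galois we also have $\sigma(L_\ba)=L_\ba$, hence $L_\ba=L_\bb$. Note that this implication uses neither the hypothesis $G_\ba\cong G_\bb\cong G$ nor the conjugacy assumption.

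For the ``if'' direction, set $L:=L_\ba=L_\bb$ and $\mathcal{G}:=\Gal(L/M)\cong G$. Transitivity of $\mathcal{G}$ on the roots of $f_\ba$ (resp.\ of $f_\bb$), which holds because these polynomials are irreducible, shows that $H_\alpha:=\mathrm{Stab}_{\mathcal{G}}(\alpha_1)$ and $H_\beta:=\mathrm{Stab}_{\mathcal{G}}(\beta_1)$ have index $n$ in $\mathcal{G}$, and a degree count gives $L^{H_\alpha}=M(\alpha_1)$ and $L^{H_\beta}=M(\beta_1)$. By hypothesis all subgroups of index $n$ in $\mathcal{G}$ are conjugate, so $H_\beta=\tau H_\alpha\tau^{-1}$ for some $\tau\in\mathcal{G}$; applying the Galois correspondence yields $M(\beta_1)=L^{\tau H_\alpha\tau^{-1}}=\tau\bigl(M(\alpha_1)\bigr)$, so $\tau$ restricts to the desired $M$-isomorphism $M(\alpha_1)\to M(\beta_1)$.

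The content here is light: there is no substantial computation. The only points needing care are the bookkeeping between $G_\ba$, $G_\bb$ viewed as permutation groups on the respective root sets and as the single abstract group $\Gal(L/M)$, and the identification of $M[X]/(f_\ba(X))$ with a stem field whose normal closure is the splitting field — both of which rely on irreducibility and separability. I would also flag explicitly that the conjugacy hypothesis is exactly what makes the ``if'' direction go through, since it can fail when $G$ has non-conjugate index-$n$ subgroups; that is the one genuinely essential use of the assumptions, and as such the main (mild) obstacle to a clean statement.
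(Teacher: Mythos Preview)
Your argument is correct and matches the paper's approach exactly: the paper does not give a separate proof but simply observes, in the sentence preceding the corollary, that under the conjugacy hypothesis the $M$-isomorphism of the quotient algebras is equivalent to the coincidence of the splitting fields, and then invokes Theorem~\ref{throotf}. You have supplied precisely the Galois-theoretic details behind that one-line remark.
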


For subgroups $H_1$ and $H_2$ of $\cS_n$, we obtain a $k$-generic polynomial for 
$H_1\times H_2$ as a generalization of Theorem \ref{th-gen}. 
\begin{theorem}[{\cite[Theorem 3.10]{HM}}]\label{thgen}
Let $M=k(q_1,\ldots,q_l,r_1,\ldots,r_m)$, $(1\leq l,\, m\leq n-1)$ be the rational function 
field over $k$ with $(l+m)$ variables. 
For $\ba\in {k(q_1,\ldots,q_l)}^n, \bb\in {k(r_1,\ldots,r_m)}^n$, we assume that 
$f_\ba(X)\in M[X]$ and $f_\bb(X)\in M[X]$ be $k$-generic polynomials for $H_1$ and $H_2$, 
respectively. 
If $\RP_{\Theta,\Gst,f_{\ba,\bb}}(X)\in M[X]$ has no repeated factors, then 
$\RP_{\Theta,\Gst,f_{\ba,\bb}}(X)$ is a $k$-generic polynomial 
for $H_1\times H_2$ which is not necessary irreducible. 
\end{theorem}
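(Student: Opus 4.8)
The plan is to combine the genericity of the absolute multi-resolvent $\RP_{\Theta,\Gst}(X)$ over $k$ (Theorem \ref{th-gen}) with the hypothesis that $f_\ba(X)$ and $f_\bb(X)$ are $k$-generic for $H_1$ and $H_2$, and then push a given $(H_1\times H_2)$-extension through the two-step specialization. First I would spell out what must be checked: given an arbitrary infinite field $N\supset k$ and an $(H_1\times H_2)$-Galois extension $E/N$, one must produce values in $N$ of the $(l+m)$ parameters $q_1,\ldots,q_l,r_1,\ldots,r_m$ so that the splitting field over $N$ of the specialized $\RP_{\Theta,\Gst,f_{\ba,\bb}}(X)$ equals $E$. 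Since $H_1\times H_2$ acts on the two coordinates independently, write $E=E_1E_2$ with $E_1/N$ an $H_1$-extension and $E_2/N$ an $H_2$-extension, $E_1\cap E_2=N$ (this decomposition is the standard fact that a direct-product Galois group corresponds to a compositum of linearly disjoint extensions with the two factor groups).

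Next I would use the genericity of $f_\ba(X)$ for $H_1$: there exist $\widetilde q_1,\ldots,\widetilde q_l\in N$ such that $\Spl_N f_{\widetilde\ba}(X)=E_1$, where $\widetilde\ba$ is the image of $\ba$ under $q_i\mapsto\widetilde q_i$ — here I use that $\ba$ has entries in $k(q_1,\ldots,q_l)$, so the specialization is defined as long as no denominator vanishes, and the set of bad specializations is a proper Zariski-closed subset, hence avoidable over the infinite field $N$. Similarly there are $\widetilde r_1,\ldots,\widetilde r_m\in N$ with $\Spl_N f_{\widetilde\bb}(X)=E_2$. Then $\Spl_N f_{\widetilde\ba,\widetilde\bb}(X)=E_1E_2=E$ and $\Gal(f_{\widetilde\ba,\widetilde\bb}/N)=H_1\times H_2$, viewed inside $\Gst=\cS_n\times\cS_n$. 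Now apply the resolvent machinery from Section~\ref{subseInt}: by Proposition \ref{propLL}(iii) (or rather its specialized analogue, Proposition \ref{prop12}(ii)) the splitting field of $\RP_{\Theta,\Gst,f_{\widetilde\ba,\widetilde\bb}}(X)$ over $N$ is generated by the conjugates $\omega_{f_{\widetilde\ba,\widetilde\bb}}(\pi(\Theta))$, and since each lies in $\Spl_N f_{\widetilde\ba,\widetilde\bb}(X)=E$ while together they cut out exactly $\Gal(f_{\widetilde\ba,\widetilde\bb})\cap\bigcap_\pi\pi\Hst\pi^{-1}$, which is trivial because $\bigcap_\pi\pi\Hst\pi^{-1}$ already contains no nontrivial element acting on both coordinate blocks, one gets $\Spl_N\RP_{\Theta,\Gst,f_{\widetilde\ba,\widetilde\bb}}(X)=E$. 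This requires the specialized resolvent to have no repeated factors at the chosen parameter values; I would arrange this by noting that the hypothesis guarantees it generically (as an element of $M[X]$), so it holds away from a proper closed subset, and then invoke Remark \ref{remGir} to replace $f_{\widetilde\ba,\widetilde\bb}$ by a Tschirnhausen transformation over $N$ if necessary — a move that changes neither the splitting field nor the decomposition type. The Galois group over $M=k(q_1,\ldots,q_l,r_1,\ldots,r_m)$ is $H_1\times H_2$ by Theorem \ref{thfun} applied to the generic situation, since the orbit of $\Gst/\Hst$ under $H_1\times H_2\subseteq\Gst$ determines the factorization, and the "not necessarily irreducible" caveat is exactly the statement that this orbit may split.

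The main obstacle I anticipate is the bookkeeping needed to guarantee that the three specializations can be performed \emph{simultaneously}: the specialization $q_i\mapsto\widetilde q_i$ must avoid the denominators of the entries of $\ba$ \emph{and} keep $f_{\widetilde\ba}(X)$ separable and with full group $H_1$, the specialization of the $r_j$ must do the same for $\bb$, and on top of that the pair must keep $\RP_{\Theta,\Gst,f_{\widetilde\ba,\widetilde\bb}}(X)$ repeated-factor-free (or be correctable by a Tschirnhausen twist). Each of these is a Zariski-open condition that is nonempty (it holds at the generic point), so over the infinite field $N$ the intersection is still nonempty; but making this rigorous is the one place where one must be a little careful, because the genericity definition only promises \emph{some} specialization realizing a given extension, not that it avoids an a priori prescribed closed set. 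The fix is that $k$-genericity is stable under passing to $N(s)$ for a new indeterminate $s$, so one may first enlarge the base and then specialize, ensuring enough room to dodge the bad loci — this is the standard argument and I would cite \cite{Kem01} or \cite[Chapter 5]{JLY02} for it rather than reproving it.
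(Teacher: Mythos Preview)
The paper does not supply its own proof of Theorem~\ref{thgen}: the result is quoted verbatim from the companion paper \cite[Theorem~3.10]{HM} and appears in Section~\ref{sePre} among the imported preliminaries, with no argument given. So there is nothing in the present paper to compare your proposal against directly.

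Your outline is nonetheless the expected one and matches the structure one would anticipate in \cite{HM}: split an arbitrary $(H_1\times H_2)$-extension $E/N$ as a compositum $E_1E_2$ of linearly disjoint $H_1$- and $H_2$-extensions, realize each factor separately by specializing the parameters of the two generic polynomials (using that $\ba$ lives over $k(q_1,\dots,q_l)$ and $\bb$ over $k(r_1,\dots,r_m)$ so the two specializations are independent), and then invoke Proposition~\ref{prop12}(ii) to identify $\Spl_N\RP_{\Theta,\Gst,f_{\widetilde\ba,\widetilde\bb}}(X)$ with $E_1E_2=E$. You have also correctly isolated the one genuine technical point: Proposition~\ref{prop12} carries the hypothesis that the specialized resolvent have no repeated factors, and the bare definition of \emph{generic} only promises the existence of some specialization, not one lying outside a prescribed proper closed locus.

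One caution on your proposed fixes. The Tschirnhausen route via Remark~\ref{remGir} does not close the gap as you phrased it: replacing $f_{\widetilde\ba,\widetilde\bb}$ by a Tschirnhausen transform $\hat f$ over $N$ yields a polynomial that is no longer a specialization of $f_{\ba,\bb}$ at any point of $N^{l+m}$, so it does not exhibit the resolvent $\RP_{\Theta,\Gst,f_{\ba,\bb}}(X)$ itself as generic. (The variant of Remark~\ref{remGir} that perturbs $\Theta$ rather than $f$ runs into the same problem, since $\Theta$ is fixed in the statement.) Your second fix---passing to a larger base and using the descent/versal properties of generic polynomials from \cite{Kem01} and \cite{JLY02}---is the correct direction; what is actually needed is the stronger fact that the set of specializations in $N^l$ realizing a fixed extension is Zariski-dense, so that it meets the open locus where the resolvent is squarefree. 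This is indeed standard, but it is the substance of the argument rather than a throwaway citation, so in a full write-up you would want to state it explicitly.
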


\section{The cases of $\cS_4$ and of $\A_4$}\label{seS4A4}

Let $M$ be an overfield of $k$ of characteristic $\neq 2$. 
We take a $k$-generic polynomial 
\[
f_{s,t}^{\cS_4}(X)=X^4+sX^2+tX+t\in k(s,t)[X]
\]
for $\cS_4$. 
The discriminant of $f_{s,t}^{\cS_4}(X)$ with respect to $X$ is given by 
\[
D_{s,t}:=t(16s^4 - 128s^2t - 4s^3t + 256t^2 + 144st^2 - 27t^3).
\]
For $\ba=(a,b)\in M$, we always assume that $f_\ba^{\cS_4}(X)$ is separable over $M$, 
i.e. $D_\ba\neq 0$. 

From the definition, for a general quartic polynomial 
\[
g_4(X)=X^4+a_1X^3+a_2X^2+a_3X+a_4\in k[X],\quad (a_1,a_2,a_3,a_4\in M),
\] 
there exist $a,b\in M$ such that 
$\Spl_M f_{a,b}^{\cS_4}(X)=\Spl_M g_4(X)$. 
Indeed we may take such $a,b\in M$ as follows: 
The polynomials $g_4(X)$ and 
\begin{align*}
h_4(X):=g_4(X-a_1/4)=X^4+A_2X^2+A_3X+A_4
\end{align*}
have the same splitting field over $M$, where 
\[
A_2=\frac{-3a_1^2 + 8a_2}{8},\ A_3=\frac{a_1^3 - 4a_1a_2 + 8a_3}{8},\ 
A_4=\frac{-3a_1^4 + 16a_1^2a_2 - 64a_1a_3 + 256a_4}{256}. 
\]
If we put 
\[
a:=\frac{A_2A_3^2}{A_4^2}\quad \mathrm{and}\quad b:=\frac{A_3^4}{A_4^3}
\]
then the polynomials $h_4(X)$ and 
\[
X^4+aX^2+bX+b=\Bigl(\frac{A_3}{A_4}\Bigr)^4\cdot h_4\Bigl(\frac{A_4}{A_3}X\Bigr)
\]
have the same splitting field over $M$. 
Hence we see $\Spl_M f_{a,b}^{\cS_4}(X)=\Spl_M g_4(X)$. 

Let $\cS_4$ act on $k(x_1,x_2,x_3,x_4)$ by $\pi(x_i)=x_{\pi(i)}, (\pi\in \cS_4)$. 
We put 
\[
\sigma:=(1234),\quad \rho_1:=(123),\quad \rho_2:=(234),\quad \omega:=(12)\in \cS_4. 
\]
For the field $k(\bx,\by):=k(x_1,\ldots,x_4,y_1,\ldots,y_4)$, 
we take the interchanging involution
\begin{align*}
\iota\ :\ k(\bx,\by)\,\longrightarrow\,k(\bx,\by),\quad 
x_i\longmapsto y_i,\ y_i\longmapsto x_i,\ (i=1,\ldots,4)
\end{align*}
as in (\ref{defiota}).
Put $(\sigma',\rho_1',\rho_2',\omega'):=
(\iota^{-1}\sigma\iota,\iota^{-1}\rho_1\iota,\iota^{-1}\rho_2\iota,\iota^{-1}\omega\iota)$ 
then $\sigma',\rho_1',\rho_2',\omega'\in\mathrm{Aut}_k(k(\by))$. 
For simplicity we write
\begin{align*}
\cS_4&=\langle\sigma,\omega\rangle,&
\cS_4'&=\langle\sigma',\omega'\rangle,&
\cS_4''&=\langle\sigma\sigma',\omega\omega'\rangle,\\
\A_4&=\langle\rho_1,\rho_2\rangle,&
\A_4'&=\langle\rho_1',\rho_2'\rangle,&
\A_4''&=\langle\rho_1\rho_1',\rho_2\rho_2'\rangle.
\end{align*}
Note that $\cS_4''$ $(\cong\cS_4)$ and $\A_4''$ $(\cong\A_4)$ 
are subgroups of $\cS_4\times\cS_4'$. 

We take an $\cS_4\times \cS_4'$-primitive $\cS_4''$-invariant 
\[
P:=x_1y_1+x_2y_2+x_3y_3+x_4y_4
\] 
and we put $f_{\bs,\bs'}^{\cS_4}(X):=f_\bs^{\cS_4}(X)f_{\bs'}^{\cS_4}(X)$ where 
$(\bs,\bs')=(s,t,s',t')$. 
Then we get an $\cS_4\times \cS_4'$-relative $\cS_4''$-invariant resolvent polynomial of 
$f_{\bs,\bs'}^{\cS_4}(X)$ by $P$ as follows: 
\begin{align}
\R_{\bs,\bs'}(X):=\RP_{P,\cS_4\times \cS_4',f_{\bs,\bs'}^{\cS_4}}
=\Bigl(G_{\bs,\bs'}^1(X)\Bigr)^2-D_{\bs}D_{\bs'}\Bigl(G_{\bs,\bs'}^2(X)\Bigr)^2\in 
k(\bs,\bs')[X]\label{polyR}
\end{align}
where
\begin{align*}
G_{\bs,\bs'}^1(X)={}&{}X^{12}-8s{s'}X^{10}-24t{t'}X^9+(11s^2{s'}^2+4t{s'}^2+4s^2{t'}-80t{t'})X^8\\
&+128st{s'}{t'}X^7+c_6X^6-64tv(3s^2u^2+4tu^2+4s^2v-16tv)X^5+\textstyle{\sum_{i=0}^4 c_i X^i},\\
G_{\bs,\bs'}^2(X)=&-5X^6+12s{s'}X^4+8t{t'}X^3+(-9s^2{s'}^2+20t{s'}^2+20s^2{t'}-16t{t'})X^2\\
&-32st{s'}{t'}X+2s^3{s'}^3-8st{s'}^3+9t^2{s'}^3-8s^3{s'}{t'}+32st{s'}{t'}\\
&-4t^2{s'}{t'}+9s^3{t'}^2-4st{t'}^2-(t^2{t'}^2/2)
\end{align*}
and $c_6,c_4,c_3,\ldots,c_0\in k(\bs,\bs')$ are given by
\begin{align*}
c_6=&-2\bigl{[}8st{s'}^3+13t^2{s'}^3-84t^2{s'}{t'}\bigr{]}-28s^3{s'}^3+576st{s'}{t'}
+57t^2{t'}^2,\\
c_4=&\ 8\,\big{[}3s^2t{s'}^4-14t^2{s'}^4+6st^2{s'}^4+304t^2{s'}^2{t'}-4st^2{s'}^2{t'}
-208st^2{t'}^2\bigl{]}\\
&+17s^4{s'}^4-1216s^2t{s'}^2{t'}-3840t^2{t'}^2-380st^2{s'}{t'}^2,\\
c_3=&-8t{t'}\bigl(-2\bigl{[}40st{s'}^3+9t^2{s'}^3+60t^2{s'}{t'}\bigr{]}-12s^3{s'}^3+832st{s'}{t'}
+37t^2{t'}^2\bigr),\\
c_2=&-2\,\big{[}16s^3t{s'}^5-96st^2{s'}^5+9s^2t^2{s'}^5+108t^3{s'}^5+1280st^2{s'}^3{t'}\\
&+168s^2t^2{s'}^3{t'}-288t^3{s'}^3{t'}-1328s^2t^2{s'}{t'}^2+1472t^3{s'}{t'}^2
-270t^3{s'}^2{t'}^2]\\
&-4s^5{s'}^5+768s^3t{s'}^3{t'}+7168st^2{s'}{t'}^2+141s^2t^2{s'}^2{t'}^2+1616t^3{t'}^3,\\
c_1=&\ 8t{t'}\bigl(-8\bigl{[}3s^2t{s'}^4+18t^2{s'}^4+48t^2{s'}^2{t'}+36st^2{s'}^2{t'}
-16st^2{t'}^2\bigr{]}\\
&-s^4{s'}^4+704s^2t{s'}^2{t'}-256t^2{t'}^2+84st^2{s'}{t'}^2\bigr),\\
c_0=&\ \bigl{[}16s^4t{s'}^6-128s^2t^2{s'}^6-4s^3t^2{s'}^6+256t^3{s'}^6+144st^3{s'}^6-27t^4{s'}^6
\\
&+1280s^2t^2{s'}^4{t'}+176s^3t^2{s'}^4{t'}-2048t^3{s'}^4{t'}-1728st^3{s'}^4{t'}+540t^4{s'}^4{t'}
\\
&-704s^3t^2{s'}^2{t'}^2+4096t^3{s'}^2{t'}^2+4864st^3{s'}^2{t'}^2-720t^4{s'}^2{t'}^2+256t^3{s'}^3{t'}^2\\
&+1008st^3{s'}^3{t'}^2
-270t^4{s'}^3{t'}^2-1024st^3{t'}^3+64t^4{t'}^3-72t^4{s'}{t'}^3\bigr{]}\\
&-256s^4t{s'}^4{t'}-4096s^2t^2{s'}^2{t'}^2
-76s^3t^2{s'}^3{t'}^2-704st^3{s'}{t'}^3-(27t^4{t'}^4/2)
\end{align*}
with simplifying notation $\bigl{[}a\bigr{]}:=a+\iota(a)$. 
It follows from the definition of $\iota$ that $\iota (s,t,s',t')=(s',t',s,t)$. 

Note that the polynomial $\R_{\bs,\bs'}(X)$ splits into two factors 
of degree $12$ over the field $k(\bs,\bs')(\sqrt{D_{\bs}D_{\bs'}})$ as 
\begin{align*}
\R_{\bs,\bs'}(X)=
\Bigl(G_{\bs,\bs'}^1(X)+\sqrt{D_{\bs}D_{\bs'}}\,G_{\bs,\bs'}^2(X)\Bigr)
\Bigl(G_{\bs,\bs'}^1(X)-\sqrt{D_{\bs}D_{\bs'}}\,G_{\bs,\bs'}^2(X)\Bigr),
\end{align*}
and one of the two factors of $\R_{\bs,\bs'}(X)$ above is the 
$\A_4\times \A_4'$-relative $\A_4''$-invariant resolvent polynomial 
$\RP_{P,\A_4\times \A_4',f_{\bs,\bs'}^{\cS_4}}(X)$ of $f_{\bs,\bs'}^{\cS_4}(X)$ by $P$. 

For $\ba=(a,b)$, $\ba'=(a',b')\in M^2$ with $D_\ba\cdot D_{\ba'}\neq 0$, we put 
\[
L_\ba:=\Spl_M f_\ba^{\cS_4}(X),\quad 
G_\ba:=\Gal(f_\ba^{\cS_4}/M),\quad 
G_{\ba,\ba'}:=\Gal(f_{\ba,\ba'}^{\cS_4}/M).
\]

By Theorem \ref{thfun}, we get an answer to $\mathbf{Int}(f_{\bs}^{\cS_4}/M)$ via 
$\R_{\bs,\bs'}(X)$. 
Here we treat only the case where both $f_\ba^{\cS_4}(X)$ and $f_{\ba'}^{\cS_4}(X)$ are 
irreducible over $M$ and $G_\ba=\cS_4$ or $\A_4$. 
We will treat the case where $G_\ba\leq \D_4$ (resp. $f_{\ba}^{\cS_4}(X)$ is reducible) 
in Section \ref{seD4} (Table $3$ and Table $4$ in Theorem \ref{thD4}) 
(resp. Section \ref{seRed} (Table $5$ and Table $6$ in Theorem \ref{thred})). 

An answer to $\mathbf{Int}(f_{\bs}^{\cS_4}/M)$ is given as follows: 

\begin{theorem}\label{thS4A4}
For $\ba=(a,b)$, $\ba'=(a',b')\in M^2$ with $D_\ba\cdot D_{\ba'}\neq 0$, assume that 
both of $f_\ba^{\cS_4}(X)$ and $f_{\ba'}^{\cS_4}(X)$ are irreducible over $M$, 
$\#G_{\ba}\geq \#G_{\ba'}$ and $G_{\ba}=\cS_4$ or $\A_4$. 
If $G_{\ba}=\cS_4$ $($resp. $G_{\ba}=\A_4$$)$ then an answer to the 
intersection problem of $f_{s,t}^{S_4}(X)$ is given by Table $1$ 
according to the decomposition types ${\rm DT}(\R_{\ba,\ba'})$. 
\end{theorem}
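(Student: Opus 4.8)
The plan is to apply the general machinery of Section \ref{sePre} to the specific situation $n=4$, $G=\cS_4\times\cS_4'$, $H=\cS_4''$, and $f=f_{\ba,\ba'}^{\cS_4}$, and to read off, for each possible embedding of $G_{\ba,\ba'}$ into $\cS_4\times\cS_4'$, the partition of $|G/H|=24$ induced by the orbits of $G_{\ba,\ba'}$ acting on $G/H$ by left multiplication. By Theorem \ref{thfun} together with the definition of $\mathrm{DT}$ (via a Tschirnhausen transformation that removes repeated factors, as in Remark \ref{remGir}), this orbit-partition is exactly $\mathrm{DT}(\R_{\ba,\ba'})$; and by Proposition \ref{prop12}(i) and Proposition \ref{propc}(ii) each orbit of length $\ell$ corresponds to a field $M(c_{0,\pi},\ldots,c_{3,\pi})$ of Tschirnhausen coefficients of degree $\ell$ over $M$ with $L_\ba\,M(c_{0,\pi},\ldots,c_{3,\pi})=L_\ba L_{\ba'}=\Spl_M f_{\ba,\ba'}^{\cS_4}(X)$. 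The intersection $L_\ba\cap L_{\ba'}$ is then recovered from Galois theory: $[L_\ba L_{\ba'}:L_\ba]=[L_{\ba'}:L_\ba\cap L_{\ba'}]$, so knowing $[L_\ba L_{\ba'}:M]$ (equivalently $|G_{\ba,\ba'}|$) together with $|G_\ba|$ and $|G_{\ba'}|$ pins down $[L_\ba\cap L_{\ba'}:M]$, and the smallest orbit length that occurs tells us the minimal degree over $M$ of a field of the form $M(c_{i,\pi})$ that still generates $L_\ba L_{\ba'}$ over $L_\ba$.

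First I would enumerate, up to conjugacy in $\cS_4\times\cS_4'$, the subgroups $G_{\ba,\ba'}$ that can arise as $\Gal(f_{\ba,\ba'}^{\cS_4}/M)$ when $G_\ba$ is the prescribed one ($\cS_4$ or $\A_4$) and $G_{\ba'}$ ranges over the transitive subgroups of $\cS_4'$ with $|G_{\ba'}|\le|G_\ba|$; since both quartics are irreducible, $G_{\ba'}\in\{\cS_4',\A_4',\D_4',\C_4',\V_4'\}$. The key constraint is that $G_{\ba,\ba'}$ is a subdirect product: it surjects onto $G_\ba$ and onto $G_{\ba'}$ under the two projections, so by Goursat's lemma it is determined by a common quotient of $G_\ba$ and $G_{\ba'}$ together with an isomorphism between the corresponding quotients. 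When $G_\ba=\cS_4$ the relevant common quotients with $G_{\ba'}$ are $1$, $\C_2$ (when $\sqrt{D_\ba D_{\ba'}}\in M$, cf. Proposition \ref{propAn}), $\cS_3$, and the full group (only if $G_{\ba'}=\cS_4'$ and $L_\ba=L_{\ba'}$); when $G_\ba=\A_4$ the common quotients are $1$, $\C_3$, and $\A_4$ itself. For each such $G_{\ba,\ba'}$ I would compute the cycle type of its action on the $24$ cosets $G/H$ — concretely, since $G/H$ can be identified with the set of bijections $\{1,2,3,4\}\to\{1,2,3,4\}$ (the $4!$ Tschirnhausen matchings), the action of $(\pi_\bx,\pi_\by)\in G_{\ba,\ba'}$ on a matching $\tau$ is $\tau\mapsto\pi_\by\circ\tau\circ\pi_\bx^{-1}$, and the orbit of $\tau=\mathrm{id}$ has length $|G_{\ba,\ba'}|/|G_{\ba,\ba'}\cap\Hst|$ while the remaining orbits are handled by the double-coset count $G_{\ba,\ba'}\backslash G/H$ from Theorem \ref{thfun}. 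This is a finite, purely group-theoretic bookkeeping computation, one line of Table $1$ per conjugacy class of $G_{\ba,\ba'}$.

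Then I would assemble the table: each row records a decomposition type $\mathrm{DT}(\R_{\ba,\ba'})$ (a partition of $24$) and, opposite it, the pair $(G_{\ba,\ba'}, L_\ba\cap L_{\ba'})$, the latter described intrinsically (e.g. $L_\ba\cap L_{\ba'}=M$, or $=M(\sqrt{D_\ba})=M(\sqrt{D_{\ba'}})$, or a cubic/quartic subfield named by its own resolvent). That the correspondence $\mathrm{DT}\leftrightarrow G_{\ba,\ba'}$ is well defined — i.e. distinct admissible $G_{\ba,\ba'}$ give distinct partitions of $24$ — is the crux, and it has to be checked case by case; in the rare event two conjugacy classes share a decomposition type one must break the tie with an auxiliary invariant such as whether $\sqrt{D_\ba D_{\ba'}}\in M$ (Proposition \ref{propAn}, Corollary \ref{corAn}) or the factorization behaviour of the cubic resolvents of $f_\ba^{\cS_4}$ and $f_{\ba'}^{\cS_4}$ separately.

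The main obstacle will be precisely this injectivity of $G_{\ba,\ba'}\mapsto\mathrm{DT}(\R_{\ba,\ba'})$ across all admissible cases for $\cS_4$ and all of $\A_4$: the subdirect products of $\cS_4\times\cS_4'$ are numerous once one also allows the $\cS_3$-amalgam and the several ways a $\D_4'$ or $\C_4'$ can be glued to $\cS_4$ along a $\C_2$, and some of their actions on $24$ points can a priori collide. Verifying that the listed decomposition types separate all the cases, and supplying the auxiliary discriminant/cubic-resolvent criterion wherever they do not, is the real content; the rest is the routine transfer of Propositions \ref{propc}, \ref{prop12} and Theorem \ref{thfun} to $n=4$ and the elementary $[L_\ba L_{\ba'}:L_\ba]=[L_{\ba'}:L_\ba\cap L_{\ba'}]$ computation.
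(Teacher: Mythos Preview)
Your proposal is correct and follows essentially the same approach as the paper: invoke Theorem~\ref{thfun} for $G=\cS_4\times\cS_4'$, $H=\cS_4''$ so that $\mathrm{DT}(\R_{\ba,\ba'})$ records the double-coset sizes $|G_{\ba,\ba'}\backslash G/H|$, enumerate the admissible subdirect products $G_{\ba,\ba'}\le\cS_4\times\cS_4'$, compute the orbit partitions of $24$, and break residual ties (your anticipated collisions are exactly the paper's cases $\{$(I-6),(I-7),(I-8)$\}$ and $\{$(I-12),(I-13)$\}$) by comparing quadratic subextensions. The only difference is methodological: where you propose to organize the enumeration by hand via Goursat's lemma and the explicit $\tau\mapsto\pi_\by\tau\pi_\bx^{-1}$ action, the paper simply delegates the double-coset computation to GAP (\texttt{DoubleCosetRepsAndSizes}) and records the output in Table~$1$.
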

\begin{center}
{\rm Table} $1$\vspace*{3mm}\\
{\small 
\begin{tabular}{|c|c|l|l|c|l|l|}\hline
$G_\ba$& $G_{\ba'}$ & & GAP ID & $G_{\ba,{\ba'}}$ & & ${\rm DT}(\R_{\ba,\ba'})$
\\ \hline 
& & (I-1) & $[576,8653]$ & $\cS_4\times \cS_4$ & $L_\ba\cap L_{\ba'}=M$ 
& $24$\\ \cline{3-7} 
& \raisebox{-1.6ex}[0cm][0cm]{$\cS_4$} & (I-2) & $[288,1026]$ 
& $(\A_4\times \A_4)\rtimes \C_2$ 
& $[L_\ba\cap L_{\ba'}:M]=2$ & $12,12$\\ \cline{3-7} 
& & (I-3) & $[96,227]$ & $(\V_4\times \V_4)\rtimes S_3$ 
& $[L_\ba\cap L_{\ba'}:M]=6$ & $12,8,4$\\ \cline{3-7}
& & (I-4) & $[24,12]$ & $\cS_4$ & $L_\ba=L_{\ba'}$ & $8,6,6,3,1$\\ \cline{2-7}
& $\A_4$ & (I-5) & $[288,1024]$ & $\cS_4\times \A_4$ 
& $L_\ba\cap L_{\ba'}=M$ & $24$\\ \cline{2-7}
& & (I-6) & $[192,1472]$ & $\cS_4\times \D_4$ & $L_\ba\cap L_{\ba'}=M$ 
& $24$\\ \cline{3-7} 
\raisebox{-1.6ex}[0cm][0cm]{$\cS_4$} & \raisebox{-1.6ex}[0cm][0cm]{$\D_4$} 
& (I-7) & $[96,187]$ & $(\A_4\times \C_4)\rtimes \C_2$ 
& $[L_\ba\cap L_{\ba'}:M]=2$ & $24$\\ \cline{3-7} 
& & (I-8) & $[96,195]$ & $(\A_4\times \V_4)\rtimes \C_2$
 & $[L_\ba\cap L_{\ba'}:M]=2$ & $24$\\ \cline{3-7} 
& & (I-9) & $[96,195]$ & $(\A_4\times \V_4)\rtimes \C_2$ 
& $[L_\ba\cap L_{\ba'}:M]=2$ & $12,12$\\ \cline{2-7}
& \raisebox{-1.6ex}[0cm][0cm]{$\C_4$} & (I-10) & $[96,186]$ 
& $\cS_4\times \C_4$ & $L_\ba\neq L_{\ba'}$ & $24$\\ \cline{3-7}
& & (I-11) & $[48,30]$ & $\A_4\rtimes \C_4$ & $[L_\ba\cap L_{\ba'}:M]=2$ 
& $12,12$\\ \cline{2-7}
& \raisebox{-1.6ex}[0cm][0cm]{$\V_4$} & (I-12) & $[96,226]$ & 
$\cS_4\times \V_4$ & $L_\ba\neq L_{\ba'}$ & $24$\\ \cline{3-7}
& & (I-13) & $[48,48]$ & $\cS_4\times \C_2$ & $[L_\ba\cap L_{\ba'}:M]=2$ 
& $24$\\ \cline{1-7}
& & (I-14) & $[144,184]$ & $\A_4\times \A_4$ & $L_\ba\cap L_{\ba'}=M$ & $12,12$\\ \cline{3-7} 
& $\A_4$ & (I-15) & $[48,50]$ & $(\V_4\times \V_4)\rtimes 
\C_3$ & $[L_\ba\cap L_{\ba'}:M]=3$ & $12,4,4,4$\\ \cline{3-7} 
\raisebox{-1.6ex}[0cm][0cm]{$\A_4$} 
& & (I-16) & $[12,3]$ & $\A_4$ & $L_\ba=L_{\ba'}$ & $6,6,4,4,3,1$\\ \cline{2-7}
& $\D_4$ & (I-17) & $[96,197]$ & $\A_4\times \D_4$ 
& $L_\ba\cap L_{\ba'}=M$ & $24$\\ \cline{2-7}
& $\C_4$ & (I-18) & $[48,31]$ & $\A_4\times \C_4$ 
& $L_\ba\cap L_{\ba'}=M$ & $24$\\ \cline{2-7}
& $\V_4$ & (I-19) & $[48,49]$ & $\A_4\times \V_4$ 
& $L_\ba\cap L_{\ba'}=M$ & $12,12$\\ \cline{1-7}
\end{tabular}
}\vspace*{5mm}
\end{center}

We checked the decomposition types by using the computer algebra system GAP \cite{GAP} 
(with the command \texttt{DoubleCosetRepsAndSizes}). 
We note that the cases $\{$(I-6),(I-7),(I-8)$\}$ and $\{$(I-12),(I-13)$\}$ may be 
distinguished by comparing the quadratic extensions of $M$ in the splitting fields. 
In the case where $G_\ba=\cS_4$, the unique quadratic extension of $M$ is given by
\begin{align*}
M\bigl(\sqrt{b(16a^4 - 128a^2b - 4a^3b + 256b^2 + 144ab^2 - 27b^3)}\bigr)
\end{align*}
(see Section \ref{seD4} for the case of $G_{\ba'}=\D_4$). 
\begin{example}\label{exS4A4}
We give some numerical examples of Theorem \ref{thS4A4}. \\

(i) Take $M=\mathbb{Q}$ and $\ba=(0,1)$, $\ba'=(2,1)$. 
Then 
\[
f_\ba^{\cS_4}(X)=X^4+X+1\quad \mathrm{and}\quad f_{\ba'}^{\cS_4}(X)=X^4+2X^3+X+1.
\]
We see that $G_\ba=G_{\ba'}=\cS_4$ and $\R_{\ba,\ba'}(X)$ splits over $\mathbb{Q}$ as 
\begin{align*}
\R_{\ba,\ba'}(X)=&\ (X-3)(X+1)^3(X^6-6X^5+12X^4-8X^3-64X^2+128X-64)\\
&\cdot(X^6+6X^5+24X^4+56X^3+32X^2-32X-256)\\
&\cdot(X^8+6X^6-16X^5-89X^4-48X^3+686X^2-1048X+4233). 
\end{align*}
Hence it follows from Theorem \ref{thS4A4} that 
$\Spl_M f_\ba^{\cS_4}(X)=\Spl_M f_{\ba'}^{\cS_4}(X)$.\\

(ii) Take $M=\mathbb{Q}$ and 
\[
\ba=(0,b),\quad \ba'=(a',b')=(2b,b^2)\quad\mathrm{with}\quad b=\frac{2^6}{3^2}.
\] 
Then we see that $G_\ba=G_{\ba'}=\A_4$ and $\R_{\ba,\ba'}(X)$ splits over $\mathbb{Q}$ as 
\begin{align*}
\R_{\ba,\ba'}(X)=&\ \Bigl(X-\frac{2^6}{3}\Bigr)\Bigl(X+\frac{2^6}{3^2}\Bigr)^3
\Bigl(X^4+\frac{2^{13}}{3^3}X^2-\frac{2^{21}}{3^6}X+\frac{2^{24}}{3^6}\Bigr)
\Bigl(X^4-\frac{2^{21}}{3^{6}}X+\frac{2^{26}}{3^7}\Bigr)\\
&\cdot\Bigl(X^6-\frac{2^7}{3}X^5+\frac{2^{14}}{3^3}X^4-\frac{2^{21}}{3^6}X^3
-\frac{2^{24}}{3^6}X^2+\frac{2^{31}}{3^8}X-\frac{2^{36}}{3^{10}}\Bigr)\\
&\cdot\Bigl(X^6+\frac{2^7}{3}X^5+\frac{2^{15}}{3^3}X^4+\frac{2^{21}\cdot7}{3^6}X^3
+\frac{2^{24}\cdot29}{3^7}X^2+\frac{2^{31}\cdot13}{3^9}X+\frac{2^{36}\cdot19}{3^{12}}\Bigr).
\end{align*}
By Theorem \ref{thS4A4}, we get $\Spl_M f_\ba^{\cS_4}(X)=\Spl_M f_{\ba'}^{\cS_4}(X)$.\\
\end{example}

\subsection{Isomorphism problem of $f_\bs^{\cS_4}(X)=X^4+sX^2+tX+t$}\label{seIsoS4}
~\\

Now we consider the problems $\mathbf{Isom}(f_\bs^{\cS_4}/M)$ and 
$\mathbf{Isom^\infty}(f_\bs^{\cS_4}/M)$. 
By Theorem \ref{thS4A4}, we have a criterion to the field isomorphism problem 
$\mathbf{Isom}(f_\bs^{\cS_4}/M)$ for fixed $\ba$, $\ba'\in M^2$. 
However we may not know when $\R_{\ba,\ba'}(X)$ has a linear factor over $M$. 
In particular, we may not answer to $\mathbf{Isom^\infty}(f_\bs^{\cS_4}/M)$, i.e., 
for a fixed $\ba\in M^2$ whether there exist infinitely many $\ba'\in M^2$ such that 
$\Spl_M f_\ba^{\cS_4}(X)=\Spl_M f_{\ba'}^{\cS_4}(X)$ or not. 

In \cite{HM07}, \cite{HM} we gave an answer to $\mathbf{Isom^\infty}(f_s^{\cS_3}/M)$, 
$\mathbf{Isom^\infty}(f_s^{\C_3}/M)$ by using formal Tschirnhausen transformation 
(cf. Section \ref{sePre}). 
We use the same technique to $\mathbf{Isom^\infty}(f_s^{\cS_4}/M)$. 
Here we explain an outline of the proof and we will give the proof in the next subsection. \\

For $\ba=(a,b)$, $\ba'=(a',b')\in M^2$ with $\ba\neq\ba'$ and $D_\ba\cdot D_{\ba'}\neq 0$, 
we take $c_{i,\pi}=\omega_{f_{\ba,\bb}}(\pi(u_i))$, $(i=0,\ldots,3)$, 
and the field of coefficients 
$M(c_{0,\pi},\ldots,c_{3,\pi})$ of Tschirnhausen transformations from $f_\ba^{\cS_4}(X)$ 
to $f_{\ba'}^{\cS_4}(X)$ as in Subsection \ref{subseTschirn}. 
Then we have
\begin{align}
f_{\ba'}^{\cS_4}(X)=\mathrm{Resultant}_Y
(f_{\ba}^{\cS_4}(Y),X-(c_{0,\pi}+c_{1,\pi}Y+c_{2,\pi}Y^2+c_{3,\pi}Y^3)).\label{eqRR}
\end{align}
By Lemma \ref{lemMM}, the splitting field of $f_{\ba}^{\cS_4}(X)$ and of 
$f_{\ba'}^{\cS_4}(X)$ over $M$ coincide if and only if $M=M(c_{0,\pi},\ldots,c_{3,\pi})$ 
for some $\pi\in\cS_4$ unless $\Gal(f_{\ba}^{\cS_4}/M)=\D_4$. 
Thus we take such $\pi\in\cS_4$, and put 
\[
(x,y,z,w):=(c_{0,\pi},\ldots,c_{3,\pi}).
\]
From the assumption $\ba\neq\ba'$, we see $(z,w)\neq (0,0)$. 
Hence we should consider the two cases (i) $w=0$ and $z\neq 0$, (ii) $w\neq 0$. 
In the case of (i) (resp. of (ii)), we put 
\[
p:=\frac{2y}{z},\quad \Bigl(\mathrm{resp.}\quad u:=\frac{4y}{w},\ v:=\frac{2z}{w}\Bigr).
\]
Then by the equality (\ref{eqRR}) we get the following result:
\begin{theorem}\label{thS4}
For $\ba=(a,b)$, $\ba'=(a',b')\in M^2$ with $\ba\neq\ba'$ and $D_\ba\cdot D_{\ba'}\neq 0$, 
two $M$-algebras $M[X]/(f_\ba^{\cS_4}(X))$ and $M[X]/(f_{\ba'}^{\cS_4}(X))$ 
are $M$-isomorphic if and only if either $\mathrm{(i)}$ there exists $p\in M$ such that 
\begin{align}
a'=\frac{P_{\ba,p} Q_{\ba,p}^2}{R_{\ba,p}^2},\quad 
b'=\frac{Q_{\ba,p}^4}{R_{\ba,p}^3}\label{eq1thS4}
\end{align}
where $P_{\bs,p},Q_{\bs,p},R_{\bs,p}\in M[\bs,p]$ with $\bs=(s,t)$ are given by 
\begin{align*}
P_{\bs,p}&=-2(s^2-4t)+6tp+sp^2,\\
Q_{\bs,p}&=-8t^2-8stp-2(s^2-4t)p^2+tp^3,\\
R_{\bs,p}&=s^4-8s^2t+16t^2+8st^2+2t(s^2-4t)p+s(s^2-4t)p^2-stp^3+tp^4,
\end{align*}
or $\mathrm{(ii)}$ there exist $u,v\in M$ such that 
\begin{align}
a'=\frac{U_{\ba,u,v} V_{\ba,u,v}^2}{W_{\ba,u,v}^2},\quad 
b'=\frac{V_{\ba,u,v}^4}{W_{\ba,u,v}^3}\label{eq2thS4}
\end{align}
where  $U_{\bs,u,v},V_{\bs,u,v},W_{\bs,u,v}\in M[\bs,u,v]$ with $\bs=(s,t)$ are given by 
\begin{align*}
U_{\bs,u,v}&=16s^3-48st-6t^2-8s^2u+16tu+su^2-28stv+6tuv-2s^2v^2+8tv^2,\\
V_{\bs,u,v}&=96s^3t-96st^2+8t^3-64s^2tu+16t^2u+14stu^2-tu^3+32s^4v-160s^2tv\\
&+128t^2v-40st^2v-16s^3uv+64stuv+12t^2uv+2s^2u^2v-8tu^2v-32s^2tv^2\\
&+64t^2v^2+8stuv^2+8t^2v^3,\\
W_{\bs,u,v}&=144s^3t^2+256t^3+144st^3-3t^4-128st^2u-120s^2t^2u-32t^3u+16s^2tu^2\\
&+32t^2u^2+33st^2u^2-8stu^3-3t^2u^3+tu^4+96s^4tv-288s^2t^2v+256t^3v+68st^3v\\
&-64s^3tuv+80st^2uv-18t^3uv+14s^2tu^2v-stu^3v+16s^5v^2-112s^3tv^2+192st^2v^2\\
&+2s^2t^2v^2+120t^3v^2-8s^4uv^2+48s^2tuv^2-64t^2uv^2+s^3u^2v^2-4stu^2v^2-4s^3tv^3\\
&+16st^2v^3+24t^3v^3+2s^2tuv^3-8t^2uv^3+s^4v^4-8s^2tv^4+16t^2v^4+8st^2v^4.
\end{align*}
\end{theorem}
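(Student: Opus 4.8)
The plan is to reduce the isomorphism problem to the explicit form stated by unwinding the machinery of Subsection \ref{subseTschirn} in the concrete case $n=4$, $f_\bs = f_{s,t}^{\cS_4}$. By Lemma \ref{lemMM}, the $M$-algebras $M[X]/(f_\ba^{\cS_4}(X))$ and $M[X]/(f_{\ba'}^{\cS_4}(X))$ are $M$-isomorphic if and only if there is some $\pi\in\cS_4\times\cS_4'$ with $M=M(c_{0,\pi},\ldots,c_{3,\pi})$, i.e.\ a Tschirnhausen transformation $f_\ba^{\cS_4}\to f_{\ba'}^{\cS_4}$ already defined over $M$. So the whole problem is to parametrize, over $M$, the quadruples $(c_0,c_1,c_2,c_3)\in M^4$ for which
\[
f_{\ba'}^{\cS_4}(X)=\mathrm{Resultant}_Y\bigl(f_{\ba}^{\cS_4}(Y),\,X-(c_0+c_1Y+c_2Y^2+c_3Y^3)\bigr)
\]
holds, and then to read off the resulting constraints on $(a',b')$. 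First I would expand this resultant: it is a monic quartic in $X$ whose coefficients are polynomials in $a,b,c_0,c_1,c_2,c_3$ (the coefficients of the cubic $X^3$ and the first power $X$ can be eliminated using $f_\ba^{\cS_4}(Y)\equiv 0$, i.e.\ $Y^4=-aY^2-bY-b$, to reduce all powers $Y^{\ge 4}$). Setting the $X^3$-coefficient to zero and the $X$-coefficient equal to the $X^0$-coefficient (the defining shape $X^4+a'X^2+b'X+b'$ of $f_{\ba'}^{\cS_4}$) gives two polynomial relations among $c_0,\ldots,c_3$; the remaining two coefficients then express $a',b'$ in terms of the $c_i$.

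The second step is to solve those two relations. The $X^3$-coefficient of the resultant is, up to sign, the trace $\sum_i(c_0+c_1\alpha_i+c_2\alpha_i^2+c_3\alpha_i^3)$, which is a linear expression in $c_0$ (with coefficients the power sums of the $\alpha_i$, hence polynomials in $a,b$); so $c_0$ is determined linearly by $c_1,c_2,c_3$. Substituting this back, the condition ``$X$-coefficient $=$ $X^0$-coefficient'' becomes a single polynomial equation in $c_1,c_2,c_3$ over $M$, homogeneous in a suitable weighted sense. As indicated in the paragraph preceding the theorem, from $\ba\ne\ba'$ one has $(c_2,c_3)\ne(0,0)$, which splits the analysis into case (i) $c_3=0,\ c_2\ne 0$ and case (ii) $c_3\ne 0$; in each case one can scale and introduce the single parameter $p:=2c_1/c_2$ (resp.\ the two parameters $u:=4c_1/c_3$, $v:=2c_2/c_3$), because an overall scaling of the Tschirnhausen vector $(c_0,c_1,c_2,c_3)$ corresponds to rescaling $X$, under which $f_{\ba'}^{\cS_4}$ of the special shape $X^4+a'X^2+b'X+b'$ goes to $X^4+aX^2+bX+b$-shape again only after the normalization $a'=A_2A_3^2/A_4^2$, $b'=A_3^4/A_4^3$ already used in the reduction $g_4\leadsto f_{a,b}^{\cS_4}$ at the start of Section \ref{seS4A4}. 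Carrying out this normalization explicitly yields formulas $a'=P_{\ba,p}Q_{\ba,p}^2/R_{\ba,p}^2$, $b'=Q_{\ba,p}^4/R_{\ba,p}^3$ in case (i), and the analogous $U,V,W$ formulas in case (ii), where $P,Q,R$ (resp.\ $U,V,W$) are exactly the polynomials obtained by collecting the relevant coefficients of the reduced resultant.

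Conversely, for the ``if'' direction one checks directly that for any $p\in M$ (resp.\ $u,v\in M$) with $R_{\ba,p}\ne 0$ (resp.\ $W_{\ba,u,v}\ne 0$) the displayed $(a',b')$ does arise from an honest Tschirnhausen transformation over $M$: one exhibits the vector $(c_0,c_1,c_2,c_3)\in M^4$ (namely $c_3=0$, $c_2$ and $c_1$ chosen so that $2c_1/c_2=p$, and $c_0$ the forced linear value, then everything rescaled by $R_{\ba,p}/Q_{\ba,p}$, etc.) and verifies the resultant identity, which is then a polynomial identity in $M[p]$ (resp.\ $M[u,v]$) that may be confirmed once and for all. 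By Lemma \ref{lemMM} this gives the desired $M$-isomorphism, so the two directions together prove the equivalence.

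The main obstacle I anticipate is purely computational bookkeeping: the resultant expansion and the elimination of $c_0$ produce large polynomials, and organizing case (ii) so that the two-parameter family $(u,v)$ really captures all solutions with $c_3\ne 0$—rather than an artificially constrained subfamily—requires checking that no genuine component of the solution variety is lost when one passes to the affine chart $c_3=1$ and renames $u=4c_1,\ v=2c_2$. Handling the degenerate loci $R_{\ba,p}=0$, $W_{\ba,u,v}=0$, $Q_{\ba,p}=0$ (where the naive formulas break down but which correspond to $f_{\ba'}^{\cS_4}$ failing to be separable, i.e.\ to $D_{\ba'}=0$, excluded by hypothesis) also needs a short argument. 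None of this is conceptually hard, but it is exactly the part where the explicit polynomials $P_{\bs,p},Q_{\bs,p},R_{\bs,p},U_{\bs,u,v},V_{\bs,u,v},W_{\bs,u,v}$ in the statement have to be produced and double-checked—which, as the authors note, is where the computer algebra system enters. The detailed verification is deferred to Subsection \ref{seProof}.
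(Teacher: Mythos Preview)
Your proposal is correct and follows essentially the same approach as the paper's proof in Subsection~\ref{seProof}: reduce via Lemma~\ref{lemMM} to finding $(c_0,c_1,c_2,c_3)\in M^4$ realizing $f_{\ba'}^{\cS_4}$ as a resultant, eliminate $c_0$ from the vanishing of the $X^3$-coefficient, split into the cases $c_3=0$ and $c_3\neq 0$, and introduce exactly the parameters $p=2c_1/c_2$ and $(u,v)=(4c_1/c_3,2c_2/c_3)$. The only point worth noting is that the paper's computation is slightly more streamlined than your scaling/normalization description: after substituting $p$ (resp.\ $u,v$), the paper observes that the single remaining constraint ``constant term $=$ linear coefficient'' is \emph{linear} in the surviving variable $z=c_2$ (resp.\ $w=c_3$), so one solves for it directly rather than appealing to the $A_2A_3^2/A_4^2$ normalization; this is what produces the explicit $P,Q,R$ (resp.\ $U,V,W$) without a separate rescaling step.
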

\begin{corollary}[An answer to $\mathbf{Isom}(f_\bs^{\cS_4}/M)$]\label{cor12}
Let $\ba$, $\ba'\in M^2$ be as in Theorem \ref{thS4}. 
We also assume that $\Gal(f_\ba^{\cS_4}/M)\neq \D_4$. 
Then two splitting fields of 
$f_\ba^{\cS_4}(X)=X^4+aX^2+bX+b$ and of $f_{\ba'}^{\cS_4}(X)=X^4+a'X^2+b'X+b'$ over $M$ 
coincide if and only if either $\mathrm{(i)}$ there exists $p\in M$ which satisfies 
$(\ref{eq1thS4})$ or $\mathrm{(ii)}$ there exists a pair of $u,v\in M$ which satisfies 
$(\ref{eq2thS4})$
\end{corollary}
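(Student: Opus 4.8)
The plan is to obtain Corollary \ref{cor12} from Theorem \ref{thS4} by inserting a single group-theoretic link. Theorem \ref{thS4} already shows that the alternative $(\ref{eq1thS4})$/$(\ref{eq2thS4})$ holds if and only if the $M$-algebras $M[X]/(f_\ba^{\cS_4}(X))$ and $M[X]/(f_{\ba'}^{\cS_4}(X))$ are $M$-isomorphic. Hence it suffices to prove, under the standing hypothesis $\Gal(f_\ba^{\cS_4}/M)\neq\D_4$, the equivalence
\[
\Spl_M f_\ba^{\cS_4}(X)=\Spl_M f_{\ba'}^{\cS_4}(X)
\quad\Longleftrightarrow\quad
M[X]/(f_\ba^{\cS_4}(X))\cong M[X]/(f_{\ba'}^{\cS_4}(X)),
\]
after which the corollary reads off Theorem \ref{thS4}. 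I would treat the main (irreducible) case, where $G_\ba$ is a transitive subgroup of $\cS_4$; the reducible specializations are covered separately in Section \ref{seRed}.

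For the implication from algebra isomorphism to splitting fields, which requires no group hypothesis, I would argue by normal closures. If $f_\ba^{\cS_4}(X)$ is irreducible then $M[X]/(f_\ba^{\cS_4}(X))=M(\alpha_1)$ is a field, and an $M$-isomorphism onto $M[X]/(f_{\ba'}^{\cS_4}(X))=M(\beta_1)$ extends to some $\sigma\in\mathrm{Aut}(\overline{M}/M)$ with $\sigma(M(\alpha_1))=M(\beta_1)$. Since the normal (Galois) closure of $M(\alpha_1)$ over $M$ is exactly $L_\ba=\Spl_M f_\ba^{\cS_4}(X)$ and likewise for $\ba'$, applying $\sigma$ gives $\sigma(L_\ba)=L_{\ba'}$; as $L_\ba/M$ is Galois, hence normal, $\sigma(L_\ba)=L_\ba$ and therefore $L_\ba=L_{\ba'}$.

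For the reverse implication I would invoke the principle recorded in Corollary \ref{cor1}: if $G_\ba$ and $G_{\ba'}$ are isomorphic to a transitive $G\leq\cS_4$ all of whose index-$4$ subgroups are conjugate in $G$, then $M$-algebra isomorphism is equivalent to coincidence of splitting fields (via Theorem \ref{throotf} and the decomposition type of the multi-resolvent). When $\Spl_M f_\ba^{\cS_4}=\Spl_M f_{\ba'}^{\cS_4}$ the Galois groups automatically agree, so it remains to check the conjugacy condition for each admissible $G$. The transitive subgroups of $\cS_4$ are $\cS_4,\A_4,\D_4,\C_4,\V_4$; excluding $\D_4$ leaves $\cS_4$ (index-$4$ subgroups are the order-$6$ point stabilizers $\cong\cS_3$, a single conjugacy class), $\A_4$ (the order-$3$ point stabilizers, a single class), and $\C_4$, $\V_4$ (where the index-$4$ subgroup is trivial, so the splitting field already equals the root field $M(\alpha_1)$ and the equivalence is immediate). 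In each case the point stabilizers of $\alpha_1$ and $\beta_1$ inside $\Gal(L/M)$ are conjugate, so $M(\alpha_1)=L^{H_\alpha}$ and $M(\beta_1)=L^{H_\beta}$ are $M$-isomorphic.

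The conceptual heart---and the reason the hypothesis $\Gal\neq\D_4$ cannot be dropped---is the group theory of $\D_4$ as a transitive permutation group of degree $4$. Its order-$2$ (index-$4$) subgroups fall into \emph{three} conjugacy classes: the central $\langle(13)(24)\rangle$, the pair of ``diagonal'' reflections $\langle(13)\rangle,\langle(24)\rangle$, and the pair of ``edge'' reflections $\langle(12)(34)\rangle,\langle(14)(23)\rangle$. The diagonal and the edge reflections both have trivial normal core, so both give quartic subfields whose Galois closure is the full $\D_4$-field, yet they are mutually non-conjugate; consequently two $\D_4$-quartics can share a splitting field while the root fields $M(\alpha_1)$ and $M(\beta_1)$ are non-isomorphic, so the equivalence above genuinely fails. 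Verifying this distinction, and confirming that no such obstruction arises for $\cS_4,\A_4,\C_4,\V_4$, is the main point to get right; the remaining content of the corollary is then a direct transcription of Theorem \ref{thS4}.
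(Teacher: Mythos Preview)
Your argument is correct and follows the paper's own route: the paper states Corollary \ref{cor12} as an immediate consequence of Theorem \ref{thS4}, having already observed (just before Theorem \ref{thS4}, via Lemma \ref{lemMM}) that ``the splitting field of $f_{\ba}^{\cS_4}(X)$ and of $f_{\ba'}^{\cS_4}(X)$ over $M$ coincide if and only if $M=M(c_{0,\pi},\ldots,c_{3,\pi})$ for some $\pi$ unless $\Gal(f_{\ba}^{\cS_4}/M)=\D_4$,'' which is precisely the link you supply through Corollary \ref{cor1}. Your explicit verification of the index-$4$ conjugacy condition for $\cS_4,\A_4,\C_4,\V_4$ and the discussion of why $\D_4$ fails (three conjugacy classes of order-$2$ subgroups, two of which have trivial core) spells out what the paper leaves tacit, but the approach is the same.
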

By Theorem \ref{thS4} we obtain an answer to $\mathbf{Isom^\infty}(f_\bs^{\cS_4}/M)$ as follows: 
We use the case (i) of Theorem \ref{thS4} (we may also use (ii) instead of (i)). 
We regard $p$ as an independent parameter over $M$ formally 
and take $f_{\ba'}^{\cS_4}(X)\in M(p)[X]$ where 
\begin{align*}
a'=\frac{P_{\ba,p} Q_{\ba,p}^2}{R_{\ba,p}^2},\quad 
b'=\frac{Q_{\ba,p}^4}{R_{\ba,p}^3}
\end{align*}
as in (\ref{eq1thS4}). 
Then we have $\Spl_{M(p)} f_\ba(X)=\Spl_{M(p)} f_{\ba'}(X)$. 
The discriminant of $f_{\ba'}^{\cS_4}(X)$ with respect to $X$ is given by 
\[
\frac{D_\ba Q_{\ba,p}^{12}S_{\ba,p}^2}{R_{\ba,p}^{12}}
\]
where $S_{\ba,p}=-64b^2+16(a^2-4b)p^2+8sp^4+p^6$. 
Thus for $p\in M$ we have $\Spl_M f_\ba(X)=\Spl_M f_{\ba'}(X)$ unless 
$Q_{\ba,p}R_{\ba,p}S_{\ba,p}=0$. 
Since only finitely many $p\in M$ satisfy $Q_{\ba,p}R_{\ba,p}S_{\ba,p}=0$, 
we have the following corollary: 
\begin{corollary}[An answer to $\mathbf{Isom}^\infty(f_\bs^{\cS_4}/M)$]\label{cor2}
Let $M\supset k$ be an infinite field. 
For $\ba\in M^2$ with $D_\ba\neq 0$, there exist infinitely many $\ba'\in M^2$ 
such that $\Spl_M f_\ba^{\cS_4}(X)=\Spl_M f_{\ba'}^{\cS_4}(X)$. 
\end{corollary}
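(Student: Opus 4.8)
The plan is to derive Corollary \ref{cor2} directly from Theorem \ref{thS4} together with the discriminant computation sketched in the paragraph preceding the corollary. First I would invoke case (i) of Theorem \ref{thS4}, regarding $p$ as a transcendental over $M$: for this formal $p$, the pair
\begin{align*}
a'=\frac{P_{\ba,p}Q_{\ba,p}^2}{R_{\ba,p}^2},\qquad b'=\frac{Q_{\ba,p}^4}{R_{\ba,p}^3}
\end{align*}
lies in $M(p)$, and the defining relation \eqref{eq1thS4} guarantees that $M(p)[X]/(f_\ba^{\cS_4}(X))$ and $M(p)[X]/(f_{\ba'}^{\cS_4}(X))$ are $M(p)$-isomorphic; since $f_\ba^{\cS_4}(X)$ is separable over $M$ hence over $M(p)$, and (as noted) $G_\ba\neq\D_4$ may be assumed away — or rather, one checks that the generic specialization keeps the splitting fields equal even when $G_\ba=\D_4$ by a direct Tschirnhausen argument — we get $\Spl_{M(p)}f_\ba^{\cS_4}(X)=\Spl_{M(p)}f_{\ba'}^{\cS_4}(X)$.

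Next I would specialize $p$ back to elements of $M$. The key point is that equality of splitting fields is preserved under any specialization $p\mapsto p_0\in M$ for which $f_{\ba'}^{\cS_4}(X)$ remains separable and the formulas do not degenerate, i.e.\ for which $R_{\ba,p_0}\neq 0$ (so $a',b'$ are defined) and the discriminant
\[
\frac{D_\ba\,Q_{\ba,p}^{12}\,S_{\ba,p}^2}{R_{\ba,p}^{12}},\qquad S_{\ba,p}=-64b^2+16(a^2-4b)p^2+8ap^4+p^6,
\]
is nonzero, i.e.\ $Q_{\ba,p_0}R_{\ba,p_0}S_{\ba,p_0}\neq 0$. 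Indeed, the isomorphism of splitting fields over $M(p)$ is witnessed by an explicit Tschirnhausen transformation whose coefficients are rational functions of $p$ with denominator dividing a power of $R_{\ba,p}$; evaluating at any $p_0$ avoiding the finitely many bad values yields a genuine Tschirnhausen transformation over $M$, and by Lemma \ref{lemMM} together with Proposition \ref{propc}(ii) this forces $\Spl_M f_\ba^{\cS_4}(X)=\Spl_M f_{\ba'}^{\cS_4}(X)$.

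Finally, since $M$ is infinite and $Q_{\ba,p}R_{\ba,p}S_{\ba,p}$ is a nonzero polynomial in $p$ (one checks it is not identically zero — e.g.\ $R_{\ba,0}=(a^2-4b)^2+8ab^2$ and $S_{\ba,0}=-64b^2$, while the leading behaviour in $p$ is controlled by the top-degree terms $tp^3$, $tp^4$, $p^6$ respectively, all nonzero as $b\neq 0$ from $D_\ba\neq 0$), there are infinitely many $p_0\in M$ with $Q_{\ba,p_0}R_{\ba,p_0}S_{\ba,p_0}\neq 0$. For each such $p_0$ we obtain a pair $\ba'=\ba'(p_0)\in M^2$ with $\Spl_M f_\ba^{\cS_4}(X)=\Spl_M f_{\ba'}^{\cS_4}(X)$. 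It remains to check that infinitely many distinct $p_0$ give infinitely many distinct pairs $\ba'$; this follows because $b'=Q_{\ba,p}^4/R_{\ba,p}^3$ is a nonconstant rational function of $p$ (its numerator and denominator have different degrees, $12$ versus $12$ — so one must instead compare, say, the rational function $a'$ or argue that the map $p\mapsto(a',b')$ has finite fibres since it is given by nonconstant rational functions), so only finitely many $p_0$ can map to any given value.

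The main obstacle I anticipate is the bookkeeping around the hypothesis $G_\ba\neq\D_4$: Theorem \ref{thS4} characterizes isomorphism of the \emph{quotient algebras}, and Corollary \ref{cor12} upgrades this to equality of \emph{splitting fields} only when $\Gal(f_\ba^{\cS_4}/M)\neq\D_4$, whereas Corollary \ref{cor2} is stated with no such restriction. To handle $G_\ba=\D_4$ (and the reducible or smaller-group cases) one cannot quote Corollary \ref{cor12} directly; instead one observes that in case (i) the construction actually produces an explicit Tschirnhausen transformation from $f_\ba^{\cS_4}$ to $f_{\ba'}^{\cS_4}$ defined over $M$ (not merely an algebra isomorphism), and by Proposition \ref{propc}(ii) any Tschirnhausen transformation over $M$ forces the two splitting fields to coincide regardless of the Galois group. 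So the resolution is to work with the Tschirnhausen transformation itself rather than with the algebra-isomorphism reformulation. The remaining routine verification — that $Q_{\ba,p}R_{\ba,p}S_{\ba,p}\not\equiv 0$ and that $p\mapsto\ba'(p)$ has finite fibres — is a short degree count that I would not grind through here.
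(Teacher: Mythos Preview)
Your approach is essentially the same as the paper's: invoke case~(i) of Theorem~\ref{thS4} with $p$ transcendental, compute the discriminant $D_\ba\,Q_{\ba,p}^{12}S_{\ba,p}^2/R_{\ba,p}^{12}$, and specialize $p$ away from the finitely many zeros of $Q_{\ba,p}R_{\ba,p}S_{\ba,p}$. Your added care about the $\D_4$ case (bypassing Corollary~\ref{cor12} by appealing directly to the explicit Tschirnhausen transformation) and about the finite fibres of $p\mapsto\ba'(p)$ addresses points the paper leaves implicit.
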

\begin{remark}
By eliminating the variable $v$ (resp. $u$) from the two equalities in (\ref{eq2thS4}) 
of Theorem \ref{thS4}, 
we get the equation $h=0$ where $h\in M(a,b,a',b')[u]$ is a polynomial in $u$ 
(resp. $v$) of degree $24$. 
This polynomial $h$ coincides with the $\cS_4\times \cS_4'$-relative $\cS_4''$-invariant 
resolvent polynomial of $f_{\ba,\ba'}^{\cS_4}(X)$ by $u$ (resp. $v$); for, from the definition 
of $u$ (resp. $v$), we may regard $u=4u_1/u_3$ (resp. $v=2u_2/u_3$) where $u_i$ is the formal 
Tschirnhausen coefficient which is defined in (\ref{defu}). 
Hence from Theorem \ref{thS4} we also get a solution to $\mathbf{Int}(f_{\bs}^{\cS_4}/M)$ by 
using Table $1$ via $\mathrm{DT}(h)$ instead of $\mathrm{DT}(\R_{\ba,\ba'})$. 
\end{remark}

\begin{example}
We give some numerical examples of Theorem \ref{thS4}. 
Note that we always assume $D_\ba\neq 0$ for $\ba=(a,b)\in M^2$. \\

(i) If we take $p=0$ then we have 
\begin{align*}
(P_\ba,Q_\ba,R_\ba)=(-2(a^2-4b),-8b^2,a^4-8a^2b+16b^2+8ab^2). 
\end{align*}
Hence two splitting fields of $f_\ba^{\cS_4}(X)=X^4+aX^2+bX+b$ and of 
\begin{align*}
f_{\ba'}^{\cS_4}(X)=X^4-\frac{2^7(a^2-4b)b^4}{(a^4-8a^2b+16b^2+8ab^2)^2}X^2
+\frac{2^{12}b^8}{(a^4-8a^2b+16b^2+8ab^2)^3}(X+1)
\end{align*}
over $M$ coincide. 
The corresponding Tschirnhausen transformation from $f_{\ba}^{\cS_4}(X)$ to 
$f_{\ba'}^{\cS_4}(X)$ as in (\ref{eqRR}) is given by
\[
f_{\ba'}^{\cS_4}(X)=\mathrm{Resultant}_Y
\Bigl(f_{\ba}^{\cS_4}(Y),X-\Bigl(-\frac{8b^2(a+2Y^2)}{(a^4-8a^2b+16b^2+8ab^2)}\Bigr)\Bigr).
\]
In particular, if we take $a=0$ then we see that the polynomials 
\[
f_{0,b}^{\cS_4}(X)=X^4+bX+b\quad \mathrm{and}\quad 
f_{2b,b^2}^{\cS_4}(X)=X^4+2bX^2+b^2(X+1)
\]
have the same splitting field over $M$. 
We remark that this example is a generalization of Example \ref{exS4A4} (i), (ii). \\

(ii) If we take $p=2$ then we have 
\begin{align*}
\bigl(P_\ba,Q_\ba,R_\ba\bigr)&=\bigl(-2(-2a+a^2-10b),-8(a^2-5b+2ab+b^2),\\
&\hspace*{9mm} 4a^3+a^4+16b-24ab-4a^2b+8ab^2\bigr). 
\end{align*}
Hence two splitting fields of $f_\ba^{\cS_4}(X)=X^4+aX^2+bX+b$ and of 
\begin{align*}
f_{\ba'}^{\cS_4}(X)=X^4&+\frac{2^7(2a-a^2+10b)(a^2-5b+2ab+b^2)^2}
{(4a^3+a^4+16b-24ab-4a^2b+8ab^2)^2}X^2\\
&+\frac{2^{12}(a^2-5b+2ab+b^2)^4}{(4a^3+a^4+16b-24ab-4a^2b+8ab^2)^3}(X+1)
\end{align*}
over $M$ coincide. 
The corresponding Tschirnhausen transformation from $f_{\ba}^{\cS_4}(X)$ to 
$f_{\ba'}^{\cS_4}(X)$ is given by
\[
f_{\ba'}^{\cS_4}(X)=\mathrm{Resultant}_Y
\Bigl(f_{\ba}^{\cS_4}(Y),X-\Bigl(-\frac{8(a^2-5b+2ab+b^2)(a+2Y+2Y^2)}
{4a^3+a^4+16b-24ab-4a^2b+8ab^2}\Bigr)\Bigr).
\]
In particular, if we take $a=0$ then we see that the polynomials 
\[
f_{0,b}^{\cS_4}(X)=X^4+bX+b\ \ \mathrm{and}\ \ 
f_{5b(b-5)^2,b(b-5)^4}^{\cS_4}(X)=X^4+5b(b-5)^2X^2+b(b-5)^4(X+1)
\]
have the same splitting field over $M$. \\

(iii) If we take $u=v=0$ then we have 
\begin{align*}
\bigl(U_\ba,V_\ba,W_\ba\bigr)=\bigl(&2(8a^3-24ab-3b^2),8b(12a^3-12ab+b^2),\\
&\ b^2(144a^3+256b+144ab-3b^2)\bigr).  
\end{align*}
Hence two splitting fields of $f_\ba^{\cS_4}(X)=X^4+aX^2+bX+b$ and of 
\begin{align*}
f_{\ba'}^{\cS_4}(X)=X^4&+\frac{2^7(8a^3-24ab-3b^2)(12a^3-12ab+b^2)^2}
{b^2(144a^3+256b+144ab-3b^2)^2}X^2\\
&+\frac{2^{12}(12a^3-12ab+b^2)^4}{b^2(144a^3+256b+144ab-3b^2)^3}(X+1)
\end{align*}
over $M$ coincide. 
The corresponding Tschirnhausen transformation from $f_{\ba}^{\cS_4}(X)$ to 
$f_{\ba'}^{\cS_4}(X)$ is given by
\[
f_{\ba'}^{\cS_4}(X)=\mathrm{Resultant}_Y
\Bigl(f_{\ba}^{\cS_4}(Y),X-\Bigl(-\frac{8(12a^3-12ab+b^2)(3b+4Y^3)}
{b(144a^3+256b+144ab-3b^2)}\Bigr)\Bigr).
\]
In particular, if we take $a=0$ then we see that the polynomials 
\[
f_{0,b}^{\cS_4}(X)=X^4+bX+b\ \ \mathrm{and}\ \
f_{-6B^2,-8B^3}^{\cS_4}(X)=X^4-6B^2X^2-8B^3(X+1)\ 
\ \mathrm{with}\ \ B=\frac{8b}{3b-256}
\]
have the same splitting field over $M$. 
We give examples in the case of $b,B\in\mathbb{Z}$ in Table $2$. 

\begin{center}
{\rm Table} $2$\vspace*{3mm}\\
\begin{tabular}{|c||c|c|c|c|c|c|}\hline
$b$ & $-256$ & $64$ & $80$ & $84$ & $85$ & $86$\\ \hline
$B$ & $2$ & $-8$ & $-40$ & $-168$ & $-680$ & $344$\\ \hline
$-6B^2$ & $-24$ & $-384$ & $-9600$ & $-169344$ & $-2774400$ & $-710016$\\ \hline
$-8B^3$ & $-64$ & $4096$ & $512000$ & $37933056$ & $2515456000$ & $-325660672$\\ \hline
\end{tabular}
\vspace*{2mm}\\
\begin{tabular}{|c||c|c|c|c|c|}\hline
$b$ & $88$ & $96$ & $128$ & $256$ & $768$ \\ \hline
$B$ & $88$ & $24$ & $8$ & $4$ & $3$\\ \hline
$-6B^2$ & $-46464$ & $-3456$ & $-384$ & $-96$ & $-54$\\ \hline
$-8B^3$ & $-5451776$ & $-110592$ & $-4096$ & $-512$ & $-216$\\ \hline
\end{tabular}\hspace*{2.8cm}
\vspace*{5mm}
\end{center}

We note that $\Gal(f_{0,b}^{\cS_4}/\mathbb{Q})=\cS_4$ for the $b$'s in Table $2$ 
except for $b=-256$, $128$, $768$ and that $\Gal(f_{0,b}^{\cS_4}/\mathbb{Q})=\D_4$ 
for the exceptional cases $b=-256$, $128$, $768$. \\
\end{example}

\subsection{Proof of Theorem \ref{thS4}}\label{seProof}
~\\

By Lemma \ref{lemMM}, two splitting fields of $f_\ba^{\cS_4}(X)$ and of 
$f_{\ba'}^{\cS_4}(X)$ over $M$ coincide if and only if there exist $x,y,z,w\in M$ 
such that 
\begin{align}
f_{\ba'}^{\cS_4}(X)=R'(x,y,z,w,a,b;X)\label{eqR}
\end{align}
where 
\begin{align*}
&R'(x,y,z,w,s,t;X)\\
&:=\mathrm{Resultant}_Y(f_{\ba}^{\cS_4}(Y),X-(x+yY+zY^2+wY^3))\\
&=t^3w^4+3st^2w^3x-t^3w^3x+s^3w^2x^2-3stw^2x^2+3t^2w^2x^2-3twx^3+x^4-2st^2w^3y\\
&+t^3w^3y-s^2tw^2xy-5t^2w^2xy-2s^2wx^2y+4twx^2y+s^2tw^2y^2+2t^2w^2y^2+2stwxy^2\\
&+sx^2y^2-2stwy^3-txy^3+ty^4-t^3w^3z-2s^2tw^2xz+4t^2w^2xz+st^2w^2xz+stwx^2z\\
&-2sx^3z-st^2w^2yz+4stwxyz-3t^2wxyz+3tx^2yz+3t^2wy^2z-4txy^2z+st^2w^2z^2\\
&+t^2wxz^2+s^2x^2z^2+2tx^2z^2-4t^2wyz^2-stxyz^2+sty^2z^2-2stxz^3+t^2xz^3-t^2yz^3\\
&+t^2z^4+\bigl(-3st^2w^3+t^3w^3-2s^3w^2x+6stw^2x-6t^2w^2x+9twx^2-4x^3+s^2tw^2y\\
&+5t^2w^2y+4s^2wxy-8twxy-2stwy^2-2sxy^2+ty^3+2s^2tw^2z-4t^2w^2z-st^2w^2z\\
&-2stwxz+6sx^2z-4stwyz+3t^2wyz-6txyz+4ty^2z-t^2wz^2-2s^2xz^2-4txz^2\\
&+styz^2+2stz^3-t^2z^3\bigr)X+\bigl(s^3w^2-3stw^2+3t^2w^2-9twx+6x^2-2s^2wy+4twy\\
&+sy^2+stwz-6sxz+3tyz+s^2z^2+2tz^2\bigr)X^2+\bigl(3tw-4x+2sz\bigr)X^3+X^4.\\
\end{align*}
We first see that $(z,w)\neq (0,0)$ as follows: 
If we assume $(z,w)=(0,0)$ then we should have
\begin{align*}
R'(x,y,0,0,s,t;X)=x^4&+sx^2y^2-txy^3+ty^4+(-4x^3-2sxy^2+ty^3)X\\
&+(6x^2+sy^2)X^2-4xX^3+X^4.
\end{align*}
By comparing the coefficients of $X^3$ in (\ref{eqR}), 
we obtain $x=0$. 
It also follows that $y=1$ because we see 
$ty^3=ty^4$ by $R'(0,y,0,0,s,t;X)=X^4+sy^2X^2+ty^3X+ty^4$. 
Thus we obtain $\ba=\ba'$ which contradicts the assumption. \\

{\bf (i) The case of $w=0$ and $z\neq 0$.} 
By comparing the coefficients of $X^3$ in (\ref{eqR}), we see $-4x+2sz=0$; 
hence we have $x=sz/2$. 
By direct computation, we then have
\begin{align*}
R'(sz/2,y,z,w,s,t;X)=c_0+c_1X+c_2X^2+X^4
\end{align*}
where
\begin{align*}
c_0&=\bigl(16ty^4-8sty^3z+4s^3y^2z^2-16sty^2z^2+4s^2tyz^3\\
&\quad\ -16t^2yz^3+s^4z^4-8s^2tz^4+16t^2z^4+8st^2z^4\bigr)\big/16,\\
c_1&=ty^3-s^2y^2z+4ty^2z-2styz^2-t^2z^3,\\
c_2&=\bigl(2sy^2+6tyz-s^2z^2+4tz^2\bigr)\big/2.
\end{align*}

Now it follows from (\ref{eqR}) that $c_0=c_1$. 
We put 
\[
p:=\frac{2y}{z}.
\]
Then, by $c_0=c_1$, we get an equation which is linear in $z$. 
From this equation we have 
\begin{align*}
z&=\frac{2(-2p^2s^2+8p^2t+p^3t-8pst-8t^2)}{p^2s^3+s^4+p^4t-4p^2st-p^3st-8s^2t
+2ps^2t+16t^2-8pt^2+8st^2}=:z'.
\end{align*}
Thus we get $\widetilde\bx:=(x,y,z)=(sz'/2,pz'/2,z')$ and 
\begin{align*}
R'(\widetilde\bx,s,t;X)=\frac{Q_{s,t}^4}{R_{s,t}^3}+\frac{Q_{s,t}^4}{R_{s,t}^3}X
+\frac{P_{s,t} Q_{s,t}^2}{R_{s,t}^2}X^2+X^4.
\end{align*}

{\bf (ii) The case of $w\neq 0$.} 
By comparing the coefficients of $X^3$ in (\ref{eqR}), we see $3tw-4x+2sz=0$. 
Hence follows $x=(3tw+2sz)/4$. 
By direct computation, we have
\begin{align*}
R'((3tw+2sz)/4,y,z,w,s,t;X)=C_0+C_1X+C_2X^2+X^4
\end{align*}
where
\begin{align*}
C_0&=\bigl(144s^3t^2w^4+256t^3w^4+144st^3w^4-3t^4w^4-512st^2w^3y-480s^2t^2w^3y\\
&\quad\ -128t^3w^3y+256s^2tw^2y^2+512t^2w^2y^2+528st^2w^2y^2-512stwy^3-192t^2wy^3\\
&\quad\ +256ty^4+192s^4tw^3z-576s^2t^2w^3z+512t^3w^3z+136st^3w^3z-512s^3tw^2yz\\
&\quad\ +640st^2w^2yz-144t^3w^2yz+448s^2twy^2z-128sty^3z+64s^5w^2z^2-448s^3tw^2z^2\\
&\quad\ +768st^2w^2z^2+8s^2t^2w^2z^2+480t^3w^2z^2-128s^4wyz^2+768s^2twyz^2\\
&\quad\ -1024t^2wyz^2+64s^3y^2z^2-256sty^2z^2-32s^3twz^3+128st^2wz^3+192t^3wz^3\\
&\quad\ +64s^2tyz^3-256t^2yz^3+16s^4z^4-128s^2tz^4+256t^2z^4+128st^2z^4\bigr)\big/256,\\
C_1&=\bigl(-12s^3tw^3+12st^2w^3-t^3w^3+32s^2tw^2y-8t^2w^2y-28stwy^2+8ty^3\\
&\quad\ -8s^4w^2z+40s^2tw^2z-32t^2w^2z+10st^2w^2z+16s^3wyz-64stwyz\\
&\quad\ -12t^2wyz -8s^2y^2z+32ty^2z+16s^2twz^2-32t^2wz^2-16styz^2-8t^2z^3\bigr)\big/8,\\
C_2&=\bigl(8s^3w^2-24stw^2-3t^2w^2-16s^2wy+32twy+8sy^2-28stwz+24tyz\\
&\quad\ -4s^2z^2+16tz^2\bigr)\big/8.
\end{align*}

Now it follows from (\ref{eqR}) that $C_0=C_1$. 
We put 
\[
u:=\frac{4y}{w},\quad v:=\frac{2z}{w}.
\]
Then, by $C_0=C_1$, we get an equation which is linear in $w$. 
From this equation we have 
\begin{align*}
w&=-4\bigl(96s^3t-96st^2+8t^3-64s^2tu+16t^2u+14stu^2-tu^3+32s^4v-160s^2tv\\
&+128t^2v-40st^2v-16s^3uv+64stuv+12t^2uv+2s^2u^2v-8tu^2v-32s^2tv^2\\
&+64t^2v^2+8stuv^2+8t^2v^3\bigr)\big/\bigl(144s^3t^2+256t^3+144st^3-3t^4-128st^2u\\
&-120s^2t^2u-32t^3u+16s^2tu^2+32t^2u^2+33st^2u^2-8stu^3-3t^2u^3+tu^4\\
&+96s^4tv-288s^2t^2v+256t^3v+68st^3v-64s^3tuv+80st^2uv-18t^3uv\\
&+14s^2tu^2v-stu^3v+16s^5v^2-112s^3tv^2+192st^2v^2+2s^2t^2v^2+120t^3v^2\\
&-8s^4uv^2+48s^2tuv^2-64t^2uv^2+s^3u^2v^2-4stu^2v^2-4s^3tv^3+16st^2v^3\\
&+24t^3v^3+2s^2tuv^3-8t^2uv^3+s^4v^4-8s^2tv^4+16t^2v^4+8st^2v^4\bigr)=:w'.
\end{align*}
We finally have $\widetilde\bx:=(x,y,z,w)=((3t+2sv)w'/4,uw'/4,vw'/2,w')$ and 
\begin{align*}
R'(\widetilde\bx,s,t;X)=\frac{V_{s,t}^4}{W_{s,t}^3}+\frac{V_{s,t}^4}{W_{s,t}^3}X
+\frac{U_{s,t} V_{s,t}^2}{W_{s,t}^2}X^2+X^4.\quad \qed
\end{align*}

\section{The case of $\D_4$}\label{seD4}

Let $M$ be an infinite overfield of $k$ with char $k\neq 2$. 
We take a $k$-generic polynomial 
\[
f_{s,t}^{\D_4}(X)=X^4+sX^2+t\in k(s,t)[X].
\]
The discriminant of $f_{s,t}^{\D_4}(X)$ with respect to $X$ is given by 
\[
D_{s,t}:=16t(s^2-4t)^2.
\]
We always assume that for $\ba=(a,b)\in M^2$, $f_\ba^{\D_4}(X)$ is separable over $M$, 
i.e. $D_\ba\neq 0$. \\

\subsection{Transformation to $X^4+sX^2+t$}\label{seTran}
~\\

From the definition of generic polynomial, for a separable quartic polynomial 
\[
g_4(X)=X^4+a_1X^3+a_2X^2+a_3X+a_4\in M[X],\quad (a_1,a_2,a_3,a_4\in M),
\] 
with $\Gal(g_4/M)\leq \D_4$, there exist $a,b\in M$ such that 
$\Spl_M f_{a,b}^{\D_4}(X)=\Spl_M g_4(X)$. \\

Indeed, in 1928, Garver \cite{Gar28-1} proved that $g_4(X)$ $(a_1=0)$ can be transformed into 
the form $f_{a,b}^{\D_4}(X)$ by certain Tschirnhausen transformation for $k=\mathbb{Q}$. 
The aim of this subsection is to give an explicit formula of such a transformation for general 
$g_4(X)$ (including the case $a_1\neq 0$) via resolvent polynomial. 

Let $k(\bx):=k(x_1,\ldots,x_4)$ be the rational function field  over $k$ with variables 
$x_1,\ldots,x_4$ as in Section \ref{sePre}. 
We put 
\[
z_1:=x_1-x_3,\qquad z_2:=x_2-x_4;
\]
then the group $\D_4=\langle\sigma,\tau_1\rangle$ 
where $\sigma=(1234)$ and $\tau_1=(13)$ acts on $k(z_1,z_2)\subset k(x_1,\ldots,x_4)$ as 
\[
\sigma\,:\, z_1\mapsto z_2,\quad z_2\mapsto -z_1,\quad 
\tau_1\,:\, z_1\mapsto -z_1,\quad z_2\mapsto z_2.
\]
Take an $\cS_4$-primitive $\D_4$-invariant $\theta:=x_1x_3+x_2x_4$. 
Then we have $k(\bx)^{\D_4}=k(s_1,\ldots,s_4)(\theta)$ where $s_i$ is the $i$-th 
elementary symmetric functions in $\bx$. 
We consider the minimal polynomial of $z_1$ over $k(s_1,\ldots,s_4)(\theta)$: 
\begin{align*}
&(X-z_1)(X+z_1)(X-z_2)(X+z_2)\\
&=X^4+\bigl(-(x_1^2+x_2^2+x_3^2+x_4^2)+2(x_1x_3+x_2x_4)\bigr)X^2+(x_1-x_3)^2(x_2-x_4)^2\\
&=X^4+(-s_1^2+2s_2+2\theta)X^2+(s_2^2-4s_1s_3+16s_4+2s_2\theta-3\theta^2). 
\end{align*}
Then we have 
\begin{lemma}\label{lemh4}
The polynomials $f_\bs(X)=X^4-s_1X^3+s_2X^2-s_3X+s_4$ and 
\[
X^4+(-s_1^2+2s_2+2\theta)X^2+(s_2^2-4s_1s_3+16s_4+2s_2\theta-3\theta^2)
\]
are Tschirnhausen equivalent over $k(s_1,\ldots,s_4)(\theta)$. 
\end{lemma}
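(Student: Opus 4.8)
The plan is to read off each of the two polynomials as the minimal polynomial of an explicit element of $k(\bx)$ over $F:=k(s_1,\ldots,s_4)(\theta)$, and then to invoke the criterion recalled in Subsection \ref{subseTschirn}: two irreducible separable polynomials over a field are Tschirnhausen equivalent over that field if and only if their quotient fields are isomorphic over it. By Artin's theorem, $k(\bx)/F$ is Galois with group $\D_4=\langle\sigma,\tau_1\rangle$, $\sigma=(1234)$, $\tau_1=(13)$. Write $g(X):=X^4+(-s_1^2+2s_2+2\theta)X^2+(s_2^2-4s_1s_3+16s_4+2s_2\theta-3\theta^2)$ for the second polynomial; the computation displayed just before the lemma already shows $g(X)=(X-z_1)(X+z_1)(X-z_2)(X+z_2)$ with $z_1=x_1-x_3$ and $z_2=x_2-x_4$, so in particular $g(X)\in F[X]$.

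First I would record separability and irreducibility over $F$. The polynomial $f_\bs(X)=\prod_{i=1}^4(X-x_i)$ is separable, and since $\D_4$ acts transitively on $\{x_1,x_2,x_3,x_4\}$ (already $\langle\sigma\rangle$ does) it is irreducible over $F$, with $F[X]/(f_\bs(X))\cong F(x_1)$. For $g(X)$, its roots $\pm z_1,\pm z_2$ are pairwise distinct in $k(\bx)$ because $\mathrm{char}\,k\neq2$ and each of $z_1$, $z_2$, $z_1-z_2$, $z_1+z_2$ is nonzero; moreover $\D_4$ permutes $\{z_1,-z_1,z_2,-z_2\}$ transitively (for example $\sigma(z_1)=z_2$ and $\sigma^2(z_1)=-z_1$), so $g(X)$ is the minimal polynomial of $z_1$ over $F$, hence irreducible, with $F[X]/(g(X))\cong F(z_1)$.

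The main point is the stabilizer computation showing $F(x_1)=F(z_1)$ inside $k(\bx)$. An element $\pi\in\D_4$ fixes $x_1$ exactly when $\pi(1)=1$, which forces $\pi\in\{e,(24)\}$; and $\pi$ fixes $z_1=x_1-x_3$ exactly when it fixes both indices $1$ and $3$, which again forces $\pi\in\{e,(24)\}$. Thus $\mathrm{Stab}_{\D_4}(x_1)=\mathrm{Stab}_{\D_4}(z_1)=\langle(24)\rangle$, so by the Galois correspondence $F(x_1)=k(\bx)^{\langle(24)\rangle}=F(z_1)$. Consequently the quotient fields $F[X]/(f_\bs(X))\cong F(x_1)=F(z_1)\cong F[X]/(g(X))$ are isomorphic over $F$, and the criterion from Subsection \ref{subseTschirn} yields that $f_\bs(X)$ and $g(X)$ are Tschirnhausen equivalent over $F$.

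If one wants the transformation itself---which the remainder of the subsection needs in order to reduce an arbitrary quartic to the shape $X^4+sX^2+t$---one expands $z_1$ in the $F$-basis $1,x_1,x_1^2,x_1^3$ of $F(x_1)$, say $z_1=c_0+c_1x_1+c_2x_1^2+c_3x_1^3$ with $c_j\in F$; applying the automorphisms in $\D_4$ then permutes the $x_i$ through their orbit while simultaneously carrying $z_1$ through $\{z_1,-z_1,z_2,-z_2\}$ in a compatible way, so that $\prod_{i=1}^4\bigl(X-(c_0+c_1x_i+c_2x_i^2+c_3x_i^3)\bigr)=g(X)$ and $(c_0,\ldots,c_3)$ are the sought Tschirnhausen coefficients. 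Computing the $c_j$ explicitly is a linear-algebra problem modulo the minimal polynomial of $x_1$ (equivalently a resolvent computation), and this bookkeeping is the only laborious part; the structural content of the lemma follows at once from the coincidence of stabilizers.
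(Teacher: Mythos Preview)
Your proof is correct and takes a genuinely different route from the paper's. The paper gives a purely computational argument: it writes down the explicit identity
\[
x_1=\frac{s_1^2s_2-4s_2^2+4s_1s_3-s_1^2\theta+4\theta^2
+(s_1^3-4s_1s_2+8s_3)z_1+(s_1^2-4s_2+4\theta)z_1^2}{2(s_1^3-4s_1s_2+8s_3)}
\]
and then applies powers of $\sigma$ to both sides to obtain each $x_i$ as a polynomial in the corresponding root of $g(X)$, exhibiting the Tschirnhausen transformation directly. Your argument instead works structurally via the Galois correspondence: you verify that $\D_4$ acts transitively on each set of roots, compute that $\mathrm{Stab}_{\D_4}(x_1)=\mathrm{Stab}_{\D_4}(z_1)=\langle(24)\rangle$, conclude $F(x_1)=F(z_1)$, and invoke the root-field criterion from Subsection~\ref{subseTschirn}.

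Your approach is cleaner and explains \emph{why} the lemma holds without any calculation. The paper's approach, however, buys something concrete: the explicit formula has denominator $2(s_1^3-4s_1s_2+8s_3)$, and this is precisely the quantity that must be assumed nonzero in Lemma~\ref{lemTT} when one specializes $(s_1,\ldots,s_4)\mapsto(-a_1,a_2,-a_3,a_4)$. Your final paragraph acknowledges that the explicit coefficients can be recovered by linear algebra, but you do not actually produce them, so the condition in Lemma~\ref{lemTT} would have to be rederived separately if one followed your route alone.
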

\begin{proof}
It can be checked directly that 
\begin{align*}
x_1=\frac{s_1^2s_2-4s_2^2+4s_1s_3-s_1^2\theta+4\theta^2
+(s_1^3-4s_1s_2+8s_3)z_1+(s_1^2-4s_2+4\theta)z_1^2}{2(s_1^3-4s_1s_2+8s_3)}.
\end{align*}
By the successive actions of $\sigma$ on both sides of this equality, we obtain the assertion. 
\end{proof}

We take an absolute (i.e. $\cS_4$-primitive) $\D_4$-invariant resolvent polynomial of 
$g_4(X)$ by $\theta$: 
\begin{align}
\RP_{\theta,\cS_4,g_4}(X)
&=(X-(x_1x_3+x_2x_4))(X-(x_1x_2+x_3x_4))(X-(x_1x_4+x_2x_3))\label{resS4}\\
&=X^3-a_2X^2+(a_1a_3-4a_4)X-a_3^2-a_1^2a_4^2+4a_2a_4.\nonumber
\end{align}
We note that if $g_4(X)$ is separable over $M$ then $\RP_{\theta,\cS_4,g_4}(X)$ 
is also separable over $M$, because their discriminants exactly coincide. 

From the assumption of $\Gal(g_4/M)\leq \D_4$, 
the resolvent polynomial $\RP_{\theta,\D_4,g_4}(X)$ has a root $c\in M$. 
By specializing parameters $(s_1,s_2,s_3,s_4)\mapsto (-a_1,a_2,-a_3,a_4)\in M^4$ in 
Lemma \ref{lemh4}, we get
\begin{lemma}\label{lemTT}
For $(a_1,a_2,a_3,a_4)\in M^4$, we assume that $a_1^3-4a_1a_2+8a_3\neq 0$. 
Then the two polynomials $g_4(X)=X^4+a_1X^3+a_2X^2+a_3X+a_4$ 
with $\Gal(g_4/M)\leq \D_4$ and 
$f_{a,b}^{\D_4}(X)=X^4+aX^2+b$ are Tschirnhausen equivalent over $M$ 
$($in particular, $\Spl_M g_4(X)=\Spl_M f_{a,b}^{\D_4}(X)$$)$ where 
\begin{align*}
a=-a_1^2+2a_2+2c,\quad b=a_2^2-4a_1a_3+16a_4+2a_2c-3c^2
\end{align*}
and $c\in M$ is a root of $\RP_{\theta,\D_4,g_4}(X)$ as in $(\ref{resS4})$. 
\end{lemma}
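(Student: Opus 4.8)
The strategy is to deduce Lemma~\ref{lemTT} by specializing the generic Tschirnhausen equivalence of Lemma~\ref{lemh4} through the evaluation homomorphism attached to $g_4(X)$, and the only point requiring care is that no denominator occurring in the explicit formulas behind Lemma~\ref{lemh4} is killed by the specialization. Recall that the proof of Lemma~\ref{lemh4} writes $x_1$ (and, by applying $\sigma$, each $x_i$) as a polynomial of degree $\le 2$ in $z_1$ (resp.\ in the appropriate $\pm z_j$) with coefficients in $k(s_1,\ldots,s_4)(\theta)$, the sole denominator being the single factor $2(s_1^3-4s_1s_2+8s_3)$. Under the substitution $(s_1,s_2,s_3,s_4)\mapsto(-a_1,a_2,-a_3,a_4)$ this factor becomes $-2(a_1^3-4a_1a_2+8a_3)$, so the hypothesis $a_1^3-4a_1a_2+8a_3\neq 0$ is exactly what makes the specialization licit; this is the one place where that hypothesis is used.

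Concretely, I would first relabel the roots $\alpha_1,\ldots,\alpha_4\in\overline{M}$ of $g_4(X)$ so that the realization of $\Gal(g_4/M)$ lies inside $\D_4=\langle(1234),(13)\rangle$; since $\Gal(g_4/M)\le\D_4$ only up to conjugacy in $\cS_4$ this is possible, and then $c:=\alpha_1\alpha_3+\alpha_2\alpha_4$ is fixed by $\Gal(g_4/M)$, hence lies in $M$, and is the root of $\RP_{\theta,\D_4,g_4}(X)$ featured in $(\ref{resS4})$. Applying the evaluation homomorphism $\omega_{g_4}$, which sends $s_1,s_2,s_3,s_4$ to $-a_1,a_2,-a_3,a_4$ (so that $f_\bs(X)\mapsto g_4(X)$), $\theta$ to $c$, and $z_1,z_2$ to $\alpha_1-\alpha_3,\alpha_2-\alpha_4$, carries the second polynomial of Lemma~\ref{lemh4} onto
\[
X^4+(-a_1^2+2a_2+2c)X^2+(a_2^2-4a_1a_3+16a_4+2a_2c-3c^2)=f_{a,b}^{\D_4}(X)
\]
with $a,b$ precisely as in the statement. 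Because $2(s_1^3-4s_1s_2+8s_3)$ does not vanish after specialization, the transformation formula of the proof of Lemma~\ref{lemh4} specializes as well; as $\sigma$ fixes every $s_i$ and $\theta$ (hence every coefficient of that formula), one obtains a single polynomial $g\in M[Y]$ of degree $\le 2$ with $g(\alpha_1-\alpha_3)=\alpha_1$, $g(\alpha_3-\alpha_1)=\alpha_3$, $g(\alpha_2-\alpha_4)=\alpha_2$, $g(\alpha_4-\alpha_2)=\alpha_4$, i.e.\ $g_4(X)=\prod_{\zeta}(X-g(\zeta))$ as $\zeta$ runs over the roots of $f_{a,b}^{\D_4}(X)$. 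Thus $g_4(X)$ is a Tschirnhausen transformation of $f_{a,b}^{\D_4}(X)$ over $M$, so $\Spl_M g_4(X)\subseteq\Spl_M f_{a,b}^{\D_4}(X)$; the opposite inclusion is immediate because the roots $\pm(\alpha_1-\alpha_3),\pm(\alpha_2-\alpha_4)$ of $f_{a,b}^{\D_4}(X)$ lie in $\Spl_M g_4(X)$, whence $\Spl_M g_4(X)=\Spl_M f_{a,b}^{\D_4}(X)$.

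To conclude that $g_4(X)$ and $f_{a,b}^{\D_4}(X)$ are Tschirnhausen \emph{equivalent} over $M$, I would finally invoke Lemma~\ref{lemM}, which upgrades the one-sided transformation above to a two-sided one as soon as $D_{a,b}\neq 0$; this last verification is the only genuinely non-formal step, and I expect it to be the main obstacle. Here $b=\bigl((\alpha_1-\alpha_3)(\alpha_2-\alpha_4)\bigr)^2\neq 0$, since $(\alpha_1-\alpha_3)(\alpha_2-\alpha_4)=(\alpha_1\alpha_2+\alpha_3\alpha_4)-(\alpha_1\alpha_4+\alpha_2\alpha_3)$ is the difference of two distinct roots of the separable resolvent cubic $\RP_{\theta,\cS_4,g_4}(X)$; and $a^2-4b=\bigl((\alpha_1-\alpha_3)^2-(\alpha_2-\alpha_4)^2\bigr)^2=\bigl((\alpha_1+\alpha_4-\alpha_2-\alpha_3)(\alpha_1+\alpha_2-\alpha_3-\alpha_4)\bigr)^2\neq 0$, because its inner factors are two of the three translation-invariant linear forms whose product equals $s_1^3-4s_1s_2+8s_3$ evaluated at the roots, i.e.\ $\pm(a_1^3-4a_1a_2+8a_3)\neq 0$. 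Since $D_{a,b}=16b(a^2-4b)^2$, this gives $D_{a,b}\neq 0$, Lemma~\ref{lemM} applies, and the proof is complete; everything beyond this separability point is a routine transport of the identities already established generically in Lemma~\ref{lemh4}.
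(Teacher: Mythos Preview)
Your proof is correct and follows exactly the same route as the paper: the paper's entire argument is the one line ``By specializing parameters $(s_1,s_2,s_3,s_4)\mapsto (-a_1,a_2,-a_3,a_4)\in M^4$ in Lemma~\ref{lemh4}, we get [Lemma~\ref{lemTT}]'', and you carry out precisely that specialization. You have simply been more careful than the paper in spelling out the two points the one-liner suppresses, namely that the hypothesis $a_1^3-4a_1a_2+8a_3\neq 0$ is exactly what keeps the denominator $2(s_1^3-4s_1s_2+8s_3)$ from vanishing, and that $D_{a,b}=16b(a^2-4b)^2\neq 0$ so that the one-sided Tschirnhausen transformation upgrades to an equivalence via Lemma~\ref{lemM}; both of your verifications of these points are correct.
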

\begin{remark}
We may assume $a_1^3-4a_1a_2+8a_3\neq 0$ for our purpose 
because we should treat only the case of $a_1=0$ and $a_3\neq 0$. 
\end{remark}
\begin{example}
Take $M=\mathbb{Q}$ and $(a_1,a_2,a_3,a_4)=(1,1,1,1)$. 
Then we obtain the $5$-th cyclotomic polynomial $g_4(X)=X^4+X^3+X^2+X+1$ 
and the corresponding resolvent polynomial $\RP_{\theta,\D_4,g_4}(X)=X^3-X^2-3X+2$ 
which splits as $(X-2)(X^2+X-1)$ over $\mathbb{Q}$. 
Thus we take $c=2$ to have $(a,b)=(5,5)$. 
Hence it follows that $g_4(X)=X^4+X^3+X^2+X+1$ and $X^4+5X+5$ have 
the same splitting field over $\mathbb{Q}$. \\
\end{example}

\subsection{Intersection problem of $f_\bs^{\D_4}(X)=X^4+sX^2+t$}\label{seIntD4}
~\\

We take the rational function field $k(\bx):=k(x_1,\ldots,x_4)$ over $k$ 
with variables $x_1,\ldots,x_4$ as in Section \ref{sePre}. 
In the case of $\D_4$, by a result of the previous subsection, we may specialize 
$x_3:=-x_1$, $x_4:=-x_2$ and consider the field $k(x_1,x_2)=k(\bx)$. 
Put 
\[
\sigma:=(1234),\quad \tau_1:=(13),\quad \tau_2:=(24),\quad 
\tau_3:=(12)(34),\quad \tau_4:=(14)(23). 
\]
Then the group $\D_4=\langle\sigma,\tau_i\rangle$, $(i=1,\ldots,4)$ acts on $k(x_1,x_2)$ 
as in the previous subsection by 
\begin{align*}
\sigma\,&:\, x_1\mapsto x_2,\, x_2\mapsto -x_1,\\
\tau_1\,&:\,x_1\mapsto -x_1,\, x_2\mapsto x_2,\quad 
\tau_2\,:\, x_1\mapsto x_1,\, x_2\mapsto -x_2,\\
\tau_3\,&:\, x_1\mapsto x_2\mapsto x_1,\hspace*{14mm} 
\tau_4\,:\, x_1\mapsto -x_2,\, x_2\mapsto -x_1.
\end{align*}
We first see that $k(x_1,x_2)^{\D_4}=k(s,t)=:k(\bs)$ where 
\[
s:=-x_1^2-x_2^2,\quad t:=x_1^2x_2^2.
\]
The element $x_1$ (resp. $x_2$) is a $\D_4$-primitive $\langle\tau_2\rangle$-invariant 
(resp. $\langle\tau_1\rangle$-invariant). 
Thus two fields $k(x_1,x_2)^{\langle\tau_2\rangle}=k(\bs)(x_1)$ and 
$k(x_1,x_2)^{\langle\tau_1\rangle}=k(\bs)(x_2)$ are non-Galois quartic fields over 
$k(\bs)=k(s,t)$. 

By Kemper-Mattig's theorem \cite{KM00}, we see that the $\D_4$-primitive 
$\langle\tau_1\rangle$-invariant resolvent polynomial 
\begin{align*}
f_{s,t}^{\D_4}(X)&:=\RP_{x_2,\D_4}(X)=(X^2-x_1^2)(X^2-x_2^2)\\
&\ =X^4+sX^2+t\in k(s,t)[X]
\end{align*}
by $x_2$ is a $k$-generic polynomial for $\D_4$. 
In this section, we treat only the case where $f_\ba^{\D_4}(X)$ is irreducible over $M$. 
(See Section \ref{seRed} for reducible cases.) 

The group $\D_4$ has five elements of order two and they form three $\cS_4$-conjugacy classes 
$\{\tau_1,\tau_2\}$, $\{\tau_3,\tau_4\}$, $\{\sigma^2=\tau_1\tau_2=\tau_3\tau_4\}$, 
and the group $\langle\sigma^2\rangle$ is the center of $\D_4$. 

The element $x_1+x_2$ (resp. $x_1-x_2$) is a $\D_4$-primitive $\langle\tau_3\rangle$-invariant 
(resp. $\langle\tau_4\rangle$-invariant). 
Hence the fields $k(\bx)^{\langle\tau_3\rangle}=k(\bs)(x_1+x_2)$ and 
$k(\bx)^{\langle\tau_4\rangle}=k(\bs)(x_1-x_2)$ are also non-Galois quartic fields over $k(\bs)$. 
Thus the $\D_4$-primitive $\langle\tau_3\rangle$-invariant resolvent polynomial 
\begin{align*}
g_{s,t}^{\D_4}(X)&:=\RP_{x_1+x_2,\D_4}(X)
=\bigl(X^2-(x_1+x_2)^2\bigr)\bigl(X^2-(x_1-x_2)^2\bigr)\\
&\ =X^4+2sX^2+(s^2-4t)\in k(s,t)[X]
\end{align*}
by $x_1+x_2$ is also a $k$-generic polynomial for $\D_4$. 
We see $g_{s,t}^{\D_4}(X)=f_{2s,s^2-4t}^{\D_4}(X)$ and that the discriminant of 
$g_{s,t}^{\D_4}(X)$ with respect to $X$ equals $2^{12}t^2(s^2-4t)$. 

We note that $k(\bs)[X]/(f_{\bs}^{\D_4}(X))$ and $k(\bs)[X]/(g_{\bs}^{\D_4}(X))$ are not 
isomorphic over $k(\bs)$ although $\Spl_{k(\bs)} f_{\bs}^{\D_4}(X)
=\Spl_{k(\bs)} g_{\bs}^{\D_4}(X)=k(x_1,x_2)$. 
From above we see 
\begin{lemma}\label{lemRF}
Assume that $\Gal(f_{\ba}^{\D_4}/M)=\D_4$ for $\ba=(a,b)\in M^2$. 
For $\ba'=(a',b')\in M^2$, the following two conditions are equivalent\,$:$\\
{\rm (i)}\ \ $\Spl_M f_{\ba'}^{\D_4}(X)
=\Spl_M f_{\ba}^{\D_4}(X)\,;$\\
{\rm (ii)} $M[X]/(f_{\ba'}^{\D_4}(X))$ is $M$-isomorphic to either 
$M[X]/(f_{\ba}^{\D_4}(X))$ or $M[X]/(f_{2a,a^2-4b}^{\D_4}(X))$.
\end{lemma}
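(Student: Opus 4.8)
The plan is to reduce the statement to a group-theoretic fact about the two non-conjugate index-$4$ subgroups $\langle\tau_1\rangle$ and $\langle\tau_3\rangle$ of $\D_4$, and then translate it back to the polynomials $f_{\bs}^{\D_4}(X)=\RP_{x_2,\D_4}(X)$ and $g_{\bs}^{\D_4}(X)=\RP_{x_1+x_2,\D_4}(X)=f_{2s,s^2-4t}^{\D_4}(X)$. First I would record the structure of $\D_4=\langle\sigma,\tau_1\rangle$: its subgroups of index $4$ (equivalently, order $2$) fall into three $\D_4$-conjugacy classes, namely $\{\langle\tau_1\rangle,\langle\tau_2\rangle\}$, $\{\langle\tau_3\rangle,\langle\tau_4\rangle\}$, and the central $\{\langle\sigma^2\rangle\}$. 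The subgroup $\langle\sigma^2\rangle$ is normal, so $f_{a,b}^{\D_4}(X)$ being irreducible with group $\D_4$ forces the stabilizer of the chosen root to be $\langle\tau_1\rangle$ or $\langle\tau_3\rangle$ (the central case would make $f$ reducible, or rather would not arise as a stem field of degree $4$ with the full group); more precisely, the two genuinely non-conjugate index-$4$ subgroups that can occur as the decomposition group of a quartic irreducible $\D_4$-polynomial are exactly $\langle\tau_1\rangle$ (giving $f_{\bs}^{\D_4}$) and $\langle\tau_3\rangle$ (giving $g_{\bs}^{\D_4}$).

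Next I would argue the substantive point: if $L=\Spl_M f_\ba^{\D_4}(X)$ is a $\D_4$-extension of $M$, then any quartic subfield $N$ of $L$ with $L=\Spl_M(\text{min.\ poly.\ of a generator of }N)$ corresponds to an index-$4$ subgroup $U\le\D_4$ with $\bigcap_{g}gUg^{-1}=1$; such $U$ is conjugate either to $\langle\tau_1\rangle$ or to $\langle\tau_3\rangle$. Consequently a quartic polynomial $f_{\ba'}^{\D_4}(X)$, irreducible over $M$, has $\Spl_M f_{\ba'}^{\D_4}(X)=L$ if and only if its stem field $M[X]/(f_{\ba'}^{\D_4}(X))$ is $M$-isomorphic to the $\langle\tau_1\rangle$-fixed subfield or the $\langle\tau_3\rangle$-fixed subfield of $L$. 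By the generic-polynomial set-up of Subsection \ref{seIntD4}, the $\langle\tau_1\rangle$-fixed subfield (specialized at $\ba$) is $M[X]/(f_\ba^{\D_4}(X))$, while the $\langle\tau_3\rangle$-fixed subfield is $M[X]/(g_\ba^{\D_4}(X))=M[X]/(f_{2a,a^2-4b}^{\D_4}(X))$, because these are precisely the specializations of the two resolvent polynomials $\RP_{x_2,\D_4}$ and $\RP_{x_1+x_2,\D_4}$ computed there, and the specialization $\omega_{f_\ba}$ carries the containments $k(\bs)(x_2)\subset k(x_1,x_2)$, $k(\bs)(x_1+x_2)\subset k(x_1,x_2)$ to the corresponding containments of fields over $M$ (here using $\Gal(f_\ba^{\D_4}/M)=\D_4$, so the specialization is ``generic'' in the sense that no degeneration collapses the subgroup lattice).

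Assembling these: (i) $\Rightarrow$ (ii) because if $\Spl_M f_{\ba'}^{\D_4}(X)=L$ then the stem field of $f_{\ba'}^{\D_4}$ is a self-splitting quartic subfield of $L$, hence $M$-isomorphic to one of the two listed fields by the dichotomy above; and (ii) $\Rightarrow$ (i) because each of $M[X]/(f_\ba^{\D_4}(X))$ and $M[X]/(f_{2a,a^2-4b}^{\D_4}(X))$ has splitting field equal to $L$ (the first by definition, the second because $g_{\bs}^{\D_4}$ and $f_{\bs}^{\D_4}$ have the same splitting field $k(x_1,x_2)$ over $k(\bs)$, a relation preserved under the specialization since $\Gal(f_\ba^{\D_4}/M)=\D_4$), and $M$-isomorphic stem fields have equal splitting fields. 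I expect the main obstacle to be the careful bookkeeping in the $\langle\tau_3\rangle$ case: one must verify that $g_\ba^{\D_4}(X)=f_{2a,a^2-4b}^{\D_4}(X)$ remains irreducible over $M$ and that its splitting field is genuinely all of $L$ rather than a proper subfield — this is where the hypothesis $\Gal(f_\ba^{\D_4}/M)=\D_4$ (as opposed to a proper subgroup) is essential, since for a proper Galois group the two index-$4$ subgroups may become conjugate or may fail to be core-free, and the clean statement of the lemma would break.
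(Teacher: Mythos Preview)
Your proposal is correct and follows exactly the line the paper intends: in the paper the lemma is stated with the preface ``From above we see,'' meaning it is regarded as an immediate consequence of the preceding discussion of the $\D_4$-conjugacy classes $\{\langle\tau_1\rangle,\langle\tau_2\rangle\}$, $\{\langle\tau_3\rangle,\langle\tau_4\rangle\}$, $\{\langle\sigma^2\rangle\}$ of order-two subgroups and the identification of the two associated resolvent quartics $f_\bs^{\D_4}=\RP_{x_2,\D_4}$ and $g_\bs^{\D_4}=\RP_{x_1+x_2,\D_4}=f_{2s,s^2-4t}^{\D_4}$. Your write-up simply spells out that implicit argument---the Galois correspondence singling out the two non-normal conjugacy classes of index-$4$ subgroups, the exclusion of the central $\langle\sigma^2\rangle$ because its fixed field is already Galois, and the specialization step---so there is nothing to compare: you have reconstructed what the paper leaves to the reader.
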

In the case of $\Gal(f_{\ba}^{\D_4}/M)=\C_4$ or $\V_4$, we see that 
$\Spl_M f_{\ba'}^{\D_4}(X)=\Spl_M f_{\ba}^{\D_4}(X)$ if and only if 
$M[X]/(f_{\ba'}^{\D_4}(X))\cong M[X]/(f_{\ba}^{\D_4}(X))$ (cf. Corollary \ref{cor1}).

The Galois biquadratic field $k(\bx)^{\langle\sigma^2\rangle}$ of $k(\bs)$ is given as 
$k(\bx)^{\langle\sigma^2\rangle}=k(\bs)(x_1/x_2)$ which is obtained as the minimal splitting 
field of 
\begin{align*}
\RP_{x_1/x_2,\D_4}(X)
=\Bigl(X^2-\Bigl(\frac{x_1}{x_2}\Bigr)^2\Bigr)\Bigl(X^2-\Bigl(\frac{x_2}{x_1}\Bigr)^2\Bigr)
=X^4-\frac{s^2-2t}{t}X^2+1
\end{align*}
over $k(\bs)$. 
The group $\D_4$ has three subgroups $\langle\tau_1,\tau_2\rangle$, 
$\C_4={\langle\sigma\rangle}$ and $\langle\tau_3,\tau_4\rangle$ of index two. 
The cyclic group $\C_4=\langle\sigma\rangle$ acts on 
$k(\bx)^{\langle\sigma^2\rangle}=k(\bs)(x_1/x_2)$ by $\sigma\,:\, x_1/x_2\mapsto -x_2/x_1$. 
Hence we take 
\begin{align*}
u:=\frac{x_1}{x_2}-\frac{x_2}{x_1}=\frac{x_1^2-x_2^2}{x_1x_2}=\sqrt{(s^2-4t)/t},\quad 
v:=x_1x_2=\sqrt{t}.
\end{align*}
Then three quadratic fields $k(\bx)^{\langle\sigma\rangle}$, 
$k(\bx)^{\langle\tau_1,\tau_2\rangle}$ and $k(\bx)^{\langle\tau_3,\tau_4\rangle}$ of 
$k(\bs)$ are given as 
\begin{align*}
k(\bx)^{\langle\sigma\rangle}&=k(\bs)(u)=k(\bs)(\sqrt{(s^2-4t)/t}),\\
k(\bx)^{\langle\tau_1,\tau_2\rangle}&=k(\bs)(v)=k(\bs)(\sqrt{t}),\\
k(\bx)^{\langle\tau_3,\tau_4\rangle}&=k(\bs)\bigl((x_1+x_2)(x_1-x_2)\bigr)=k(\bs)(\sqrt{s^2-4t}).
\end{align*}
Note that $t=s^2/(u^2+4)$. 
From the above observation, we see the following three elementary lemmas 
(cf. \cite{Buc1910}, \cite{Gar28-2}, \cite{Les38}, \cite{Plo87}, \cite{KW89}, 
\cite[Chapter 2]{JLY02}):
\begin{lemma}\label{lemgenC4V4}
Let $k$ be a field of char $k\neq 2$. 
Then we have\\
{\rm (i)} $f_{s,u}^{\C_4}(X)=X^4+sX^2+s^2/(u^2+4)\in k(s,u)[X]$ 
is $k$-generic for $\C_4\,;$\\
{\rm (ii)} $f_{s,v}^{\V_4}(X)=X^4+sX^2+v^2\in k(s,v)[X]$ is $k$-generic for $\V_4$.
\end{lemma}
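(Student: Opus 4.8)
The plan is to verify the two defining properties of a $k$-generic polynomial separately for $f_{s,u}^{\C_4}(X)$ and $f_{s,v}^{\V_4}(X)$: first that the Galois group over the function field $k(s,u)$ (resp.\ $k(s,v)$) is $\C_4$ (resp.\ $\V_4$), and second the specialization property over an arbitrary infinite $M\supseteq k$. For the first I would invoke the classical criterion for the group of an irreducible biquadratic $X^4+aX^2+b$ over a field of characteristic $\neq 2$: the group is $\V_4$ iff $b\in(M^\times)^2$, it is $\C_4$ iff $b(a^2-4b)\in(M^\times)^2$, and it is $\D_4$ otherwise; moreover $X^4+aX^2+b$ is irreducible iff neither $a^2-4b$ nor $b$ is a square (and, in the latter case, $\pm2\sqrt b-a$ is not a square). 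For $f_{s,u}^{\C_4}$ one has $a=s$, $b=s^2/(u^2+4)$, $a^2-4b=s^2u^2/(u^2+4)$, so $b(a^2-4b)=\bigl(s^2u/(u^2+4)\bigr)^2$ is a square in $k(s,u)$, whereas $b$ and $a^2-4b$ are not squares there because $u^2+4$ is squarefree in $k[u]$; hence $f_{s,u}^{\C_4}$ is irreducible with group $\C_4$. Likewise $f_{s,v}^{\V_4}=X^4+sX^2+v^2$ has $b=v^2$ a square and is irreducible since neither $s^2-4v^2$ nor $\pm2v-s$ is a square in $k(s,v)$, so its group is $\V_4$.

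For the specialization property I would use that $f_{s,t}^{\D_4}(X)=X^4+sX^2+t$ is $k$-generic for $\D_4$ together with $\C_4,\V_4\le\D_4$: by Kemper's theorem \cite{Kem01}, every $\C_4$- (resp.\ $\V_4$-) extension $L/M$ over an infinite $M\supseteq k$ equals $\Spl_M f_{a,b}^{\D_4}(X)$ for some $(a,b)\in M^2$ with $b(a^2-4b)\neq0$, and then $\Gal(f_{a,b}^{\D_4}/M)=\Gal(L/M)$. From the computations of Subsection~\ref{seIntD4}, the specializations of $u:=(x_1^2-x_2^2)/(x_1x_2)$, of $v:=x_1x_2$, and of $x_1^2-x_2^2$ are square roots in $L$ of $(a^2-4b)/b$, of $b$, and of $a^2-4b$, lying respectively in the fixed fields of $\langle\sigma\rangle$, $\langle\tau_1,\tau_2\rangle$, $\langle\tau_3,\tau_4\rangle$. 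Since $\langle\sigma\rangle\cong\C_4$ is the unique subgroup of $\D_4$ of that type, a $\C_4$-specialization forces $\sqrt{(a^2-4b)/b}\in M$; setting $u:=\sqrt{(a^2-4b)/b}\in M^\times$ gives $u^2+4=a^2/b$, hence $f_{a,u}^{\C_4}(X)=X^4+aX^2+b=f_{a,b}^{\D_4}(X)$ and $\Spl_M f_{a,u}^{\C_4}(X)=L$, as required.

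The $\V_4$ case needs a short dichotomy, which is the only genuine bookkeeping obstacle: $\D_4$ has exactly two subgroups isomorphic to $\V_4$, namely $\langle\tau_1,\tau_2\rangle$ and $\langle\tau_3,\tau_4\rangle$ (both normal). If $\Gal(f_{a,b}^{\D_4}/M)$ is the first, then $\sqrt b\in M$ and $f_{a,\sqrt b}^{\V_4}(X)=f_{a,b}^{\D_4}(X)$, so $\Spl_M f_{a,\sqrt b}^{\V_4}(X)=L$. If it is the second, then $\sqrt{a^2-4b}\in M$, and I would use the identity $\Spl_M f_{a,b}^{\D_4}(X)=\Spl_M f_{2a,a^2-4b}^{\D_4}(X)$ — obtained by replacing the roots $\pm\alpha,\pm\beta$ of $f_{a,b}^{\D_4}$ by $\pm(\alpha+\beta),\pm(\alpha-\beta)$, which is the specialization of $\Spl_{k(\bs)}f_{s,t}^{\D_4}(X)=\Spl_{k(\bs)}g_{s,t}^{\D_4}(X)$ with $g_{s,t}^{\D_4}=f_{2s,s^2-4t}^{\D_4}$ — so that $f_{2a,a^2-4b}^{\D_4}(X)=f_{2a,\sqrt{a^2-4b}}^{\V_4}(X)$ has splitting field $L$. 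In all cases $L$ is the splitting field over $M$ of a member of the relevant two-parameter family, completing the argument. The remaining ingredients (the reducibility and Galois-group tests for biquadratics, and the elementary identities among $b$ and $a^2-4b$) are routine and documented in \cite[Chapter~2]{JLY02}.
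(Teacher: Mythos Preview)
Your argument is essentially correct and fills in what the paper omits (the paper offers no proof beyond ``from the above observation'' and citations). The route implicit in the paper is more direct than yours: parallel to its treatment of $f_{s,t}^{\D_4}$, one applies Kemper--Mattig \cite{KM00} straight to the faithful linear $\C_4$- and $\V_4$-actions on $k(x_1,x_2)$; since $k(x_1,x_2)^{\langle\sigma\rangle}=k(s,u)$ and $k(x_1,x_2)^{\V_4}=k(s,v)$ (for the transitive Klein four) are rational over $k$, genericity of the minimal polynomial of $x_1$ over each follows at once, with no case analysis. Your detour through the $\D_4$-generic polynomial and Kemper's subgroup theorem \cite{Kem01} is what forces the $\V_4$ dichotomy between the two Klein-four subgroups of $\D_4$; that case split is sound but avoidable.

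There is, however, one small gap to patch in your $\C_4$ argument: the identity $u^2+4=a^2/b$ requires $a\neq 0$, yet Kemper's theorem does not forbid a specialization with $a=0$, and such specializations genuinely arise when $\sqrt{-1}\in M$ (e.g.\ $X^4+b$ over a field containing $i$, with $b$ a non-square, has cyclic Galois group). Since $M$ is infinite you can first move to an equivalent specialization with $a\neq 0$: for instance, Lemma~\ref{lemD4rf}(iii) with $a=0$ and generic $(y',w)\in M^2$ gives $a'=-4bwy'\neq 0$ and the same splitting field.
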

\begin{lemma}
For $\ba=(a,b)\in M^2$ with $D_\ba\neq 0$, the polynomial 
$f_\ba^{\D_4}(X)=X^4+aX^2+b$ is reducible over $M$ 
if and only if either $\sqrt{a^2-4b}\in M$, $\sqrt{-a+2\sqrt{b}}\in M$ or 
$\sqrt{-a-2\sqrt{b}}\in M$. 
\end{lemma}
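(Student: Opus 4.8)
The final statement is the lemma characterizing reducibility of $f_\ba^{\D_4}(X)=X^4+aX^2+b$ over $M$. The plan is to work directly with the explicit factorization structure of a biquadratic-type quartic, exploiting that $f_\ba^{\D_4}(X)$ has only even-degree terms so its roots come in $\pm$ pairs, say $\pm x_1,\pm x_2$ where $x_1^2+x_2^2=-a$ and $x_1^2 x_2^2 = b$; equivalently $x_1^2, x_2^2$ are the roots of $Y^2+aY+b$. First I would enumerate the possible factorization types over $M$. Since $f_\ba^{\D_4}(X)$ has no cubic or linear term by symmetry, a linear factor $X-\gamma$ forces $X+\gamma$ to also be a factor; so reducibility into a linear times a cubic is impossible unless it further factors, and the genuinely relevant splittings are: (a) $f_\ba^{\D_4}(X)$ has a root in $M$, i.e. some $x_i\in M$; (b) $f_\ba^{\D_4}(X)$ factors into two monic quadratics over $M$. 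Case (a) quickly reduces to case (b) because a root $x_i\in M$ gives the factor $X^2-x_i^2\in M[X]$.

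Next I would analyze the quadratic factorization $X^4+aX^2+b=(X^2+pX+q)(X^2-pX+r)$ with $p,q,r\in M$ (the linear coefficients must be opposite since the cubic term vanishes). Expanding gives $q+r-p^2=a$, $p(r-q)=0$, $qr=b$. The condition $p(r-q)=0$ splits into two subcases: either $p=0$, giving $X^4+aX^2+b=(X^2+q)(X^2+r)$ with $q+r=a$, $qr=b$, so $q,r$ are the roots of $Y^2-aY+b$ — wait, checking signs: $q+r=a$ and $qr=b$ means $q,r$ are roots of $Y^2-aY+b$, and these lie in $M$ iff the discriminant $a^2-4b$ is a square in $M$, i.e. $\sqrt{a^2-4b}\in M$; or $q=r$, giving $(X^2+pX+q)(X^2-pX+q)=X^4+(2q-p^2)X^2+q^2$, so $q^2=b$ forces $q=\pm\sqrt{b}$ and then $p^2=2q-a$, hence $p\in M$ iff $\sqrt{-a\pm 2\sqrt{b}}\in M$ for the appropriate sign (which requires $\sqrt{b}\in M$ as a prerequisite, but that is subsumed). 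I would then check that these three alternatives $\sqrt{a^2-4b}\in M$, $\sqrt{-a+2\sqrt b}\in M$, $\sqrt{-a-2\sqrt b}\in M$ exactly capture all reducibility, and conversely each one does produce a nontrivial factorization — the last two needing the observation that $\sqrt{-a\pm 2\sqrt b}\in M$ tacitly includes $\sqrt b\in M$, so the factor $(X^2\mp\sqrt{-a\pm 2\sqrt b}\,X+\sqrt b)$ genuinely lies in $M[X]$.

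For the converse direction I would argue that if $f_\ba^{\D_4}(X)$ is reducible over $M$ then, by the enumeration above, one of the quadratic factorizations occurs, hence one of the three radical conditions holds; and for the forward direction I would simply exhibit the factorization in each of the three cases. A small amount of care is needed to confirm that the separability hypothesis $D_\ba=16b(a^2-4b)^2\neq 0$ (so $b\neq 0$ and $a^2\neq 4b$) does not interfere — it only rules out degenerate repeated-root situations and does not exclude any of the three reducibility conditions; in particular $a^2-4b\neq 0$ merely says the first condition, when it holds, gives two distinct quadratic factors.

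The main obstacle, such as it is, is bookkeeping rather than depth: one must be careful with signs in the relations $x_1^2+x_2^2=-a$ versus $q+r=a$, and one must correctly handle the logical structure of the "$p=0$ or $q=r$" dichotomy so that the three stated radical conditions come out as an exhaustive and non-redundant disjunction. There is also the minor subtlety that the two conditions $\sqrt{-a\pm 2\sqrt b}\in M$ presuppose a choice of $\sqrt b$; one should note $(-a+2\sqrt b)(-a-2\sqrt b)=a^2-4b$, so if $\sqrt b\in M$ then at least the product of the two relevant radicands is $\sqrt{a^2-4b}$-controlled, and this interlocking is exactly why exactly these three (not more) conditions appear. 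Since all of this is elementary field theory on a quartic with no odd terms, a short direct proof along these lines suffices.
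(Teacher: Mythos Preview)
Your proposal is correct and is the standard elementary argument. Note that the paper does not actually prove this lemma: it is stated as one of ``three elementary lemmas'' following the Galois-theoretic setup of the $\D_4$-action on $k(x_1,x_2)$, with a parenthetical list of references (Bucht, Garver, Lester, van der Ploeg, Kappe--Warren, Jensen--Ledet--Yui) in lieu of a proof. The only supporting computation the paper records nearby is the explicit factorization of the companion resolvent $g_\ba^{\D_4}(X)=X^4+2aX^2+(a^2-4b)$ over the algebraic closure, which displays the roots $\pm\sqrt{-a\pm 2\sqrt b}$; this is consistent with, but does not by itself constitute, a proof of the reducibility criterion.

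Your direct enumeration of quadratic factorizations $(X^2+pX+q)(X^2-pX+r)$ and the dichotomy $p=0$ versus $q=r$ is exactly what one finds in the cited sources (e.g.\ Kappe--Warren), so in spirit you are supplying precisely the argument the paper defers to the literature. The bookkeeping points you flag---that a linear factor forces a factor $X^2-\gamma^2$ because the polynomial is even, that $\sqrt{-a\pm 2\sqrt b}\in M$ tacitly forces $\sqrt b\in M$, and that the separability hypothesis $b(a^2-4b)\neq 0$ only excludes degenerate coincidences---are all handled correctly.
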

Note that 
\begin{align*}
g_\ba^{\D_4}(X)&=\RP_{x_1+x_2,\D_4,f_\ba^{\D_4}}(X)=X^4+2aX^2+(a^2-4b)\\
&=\textstyle{\bigl(X-\sqrt{-a+2\sqrt{b}}\bigr)\bigl(X+\sqrt{-a+2\sqrt{b}}\bigr)
\bigl(X-\sqrt{-a-2\sqrt{b}}\bigr)\bigl(X+\sqrt{-a-2\sqrt{b}}\bigr)}.
\end{align*}
\begin{lemma}\label{lem2}
For $\ba=(a,b)\in M^2$ with $D_\ba\neq 0$, assume that 
$f_\ba^{\D_4}(X)=X^4+aX^2+b$ is irreducible over $M$. 
Then the following assertions hold\,{\rm :}\\
$(\mathrm{i})$ $\sqrt{b}\in M$ if and only if $\Gal(f_\ba^{\D_4}/M)=\V_4$\,{\rm; }\\
$(\mathrm{ii})$ $\sqrt{(a^2-4b)/b}\in M$ if and only if $\Gal(f_\ba^{\D_4}/M)=\C_4$\,{\rm ;}\\
$(\mathrm{iii})$ $\sqrt{b}\not\in M$ and $\sqrt{(a^2-4b)/b}\not\in M$ if and only if 
$\Gal(f_\ba^{\D_4}/M)=\D_4$.\\
\end{lemma}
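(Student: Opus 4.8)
The plan is to determine $G := \Gal(f_\ba^{\D_4}/M)$ by testing two explicit elements of the splitting field $L_\ba := \Spl_M f_\ba^{\D_4}(X)$ for membership in $M$. First I would note that, since $f_\ba^{\D_4}(X) = X^4 + aX^2 + b$ is an even polynomial, its roots come in pairs $\{\rho, -\rho\}$; writing them as $\alpha_1, -\alpha_1, \alpha_2, -\alpha_2$ with $\alpha_1^2, \alpha_2^2$ the roots of $Y^2 + aY + b$, any $M$-automorphism of $L_\ba$ induces a permutation of the four roots compatible with $\rho \mapsto -\rho$ (automatically, by $M$-linearity), and the group of all such permutations is precisely the copy of $\D_4 = \langle\sigma, \tau_1\rangle$ acting on $k(x_1,x_2)$ in Subsection~\ref{seIntD4}, under the identification $x_i \mapsto \alpha_i$. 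Hence $G \leq \D_4$. Because $f_\ba^{\D_4}(X)$ is irreducible over $M$, $G$ is transitive on the four roots, so $4 \mid |G|$; as $|\D_4| = 8$, the transitive subgroups of $\D_4$ are exactly $\D_4$ itself, the cyclic $\langle\sigma\rangle \cong \C_4$, and the transitive Klein four-subgroup $\langle\tau_3,\tau_4\rangle \cong \V_4$ (the other Klein four-subgroup $\langle\tau_1,\tau_2\rangle$ is intransitive). So it remains only to decide when $G \leq \langle\sigma\rangle$ and when $G \leq \langle\tau_3,\tau_4\rangle$.

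Next I would introduce $v := \alpha_1\alpha_2$ and $u := \alpha_1/\alpha_2 - \alpha_2/\alpha_1$, the specializations of the invariants $x_1x_2$ and $x_1/x_2 - x_2/x_1$ appearing in Subsection~\ref{seIntD4}; both lie in $L_\ba$, and from $\alpha_1^2 + \alpha_2^2 = -a$ and $\alpha_1^2\alpha_2^2 = b$ one finds $v^2 = b$ and $u^2 = \bigl((\alpha_1^2+\alpha_2^2)^2 - 4\alpha_1^2\alpha_2^2\bigr)/b = (a^2-4b)/b$. The separability hypothesis $D_\ba = 16b(a^2-4b)^2 \neq 0$ forces $b \neq 0$ and $a^2 - 4b \neq 0$, so $u, v \neq 0$. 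A direct computation with the $\D_4$-action recorded in Subsection~\ref{seIntD4} gives $\mathrm{Stab}_{\D_4}(x_1/x_2 - x_2/x_1) = \langle\sigma\rangle$ and $\mathrm{Stab}_{\D_4}(x_1x_2) = \langle\tau_3,\tau_4\rangle$, both of index $2$; hence the $\D_4$-orbit of $u$ is $\{u,-u\}$ and that of $v$ is $\{v,-v\}$. Since every $g \in G$ therefore sends $u$ to $\pm u$ and $v$ to $\pm v$, and $u, v \neq 0$, the element $u$ (resp. $v$) lies in $M = L_\ba^G$ if and only if $G \leq \mathrm{Stab}_{\D_4}(u)$ (resp. $G \leq \mathrm{Stab}_{\D_4}(v)$).

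Finally I would assemble the three claims. Since $v^2 = b$, we have $\sqrt b \in M$ iff $v \in M$ iff $G \leq \langle\tau_3,\tau_4\rangle$; but $G$ transitive together with $G \leq \langle\tau_3,\tau_4\rangle$ forces $G = \langle\tau_3,\tau_4\rangle$ (as $4 \mid |G|$), i.e. $\Gal(f_\ba^{\D_4}/M) = \V_4$, which is (i). Likewise $\sqrt{(a^2-4b)/b} \in M$ iff $u \in M$ iff $G \leq \langle\sigma\rangle$ iff $G = \langle\sigma\rangle \cong \C_4$, which is (ii). Statement (iii) then follows at once: as $G$ is one of the three mutually non-isomorphic possibilities $\D_4$, $\C_4$, $\V_4$, having neither $\sqrt b$ nor $\sqrt{(a^2-4b)/b}$ in $M$ rules out $\C_4$ and $\V_4$ by (i) and (ii), leaving $G = \D_4$, while the converse direction is immediate. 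I expect the only point needing care to be the middle step — checking that the two $\D_4$-stabilizers are exactly $\langle\sigma\rangle$ and $\langle\tau_3,\tau_4\rangle$ and that passing from the generic invariants to their specializations does not enlarge these stabilizers, which is precisely where $D_\ba \neq 0$ is used; the remaining arguments are routine bookkeeping with the subgroup lattice of $\D_4$.
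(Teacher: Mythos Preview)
Your proof is correct and follows exactly the line the paper sets up. The paper does not give an explicit proof of this lemma but states it as a direct consequence of the preceding discussion of the invariants $u=x_1/x_2-x_2/x_1$ and $v=x_1x_2$ and the index-two subgroups of $\D_4$ fixing them, together with the classical references cited there; your argument makes precisely this deduction explicit, identifying $\mathrm{Stab}_{\D_4}(v)=\langle\tau_3,\tau_4\rangle$ and $\mathrm{Stab}_{\D_4}(u)=\langle\sigma\rangle$ and then using transitivity to pin down $G$.
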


In the case of $\Gal(f_\ba^{\D_4}/M)=\D_4$, three quadratic extensions of $M$ are given as 
\begin{align}
M(\sqrt{b}),\qquad M(\sqrt{(a^2-4b)/b}),\qquad M(\sqrt{a^2-4b}).\label{3quad}
\end{align}

For the field $k(\bx,\by):=k(x_1,x_2,y_1,y_2)$, we take the interchanging involution
\begin{align*}
\iota\ :\ k(\bx,\by)\,\longrightarrow\,k(\bx,\by),\quad 
x_1\longmapsto y_1,\ y_1\longmapsto x_1,\ x_2\longmapsto y_2,\ y_2\longmapsto x_2
\end{align*}
as in Section \ref{seS4A4}.
For $\sigma=(1234)$, $\tau_1=(13)\in \cS_4$, we put 
$(\sigma',\tau_1'):=(\iota^{-1}\sigma\iota,\iota^{-1}\tau_1\iota)$; 
then $\sigma',\tau_1'\in\mathrm{Aut}_k(\by)$; 
and we write 
\begin{align*}
\D_4=\langle\sigma,\tau_1\rangle,\qquad
\D_4'=\langle\sigma',\tau_1'\rangle,\qquad
\D_4''=\langle\sigma\sigma',\tau_1\tau_1'\rangle.
\end{align*}
Note that $\D_4''$ $(\cong \D_4)$ is a subgroup of $\D_4\times\D_4'$. 

Take an $\cS_4\times \cS_4'$-primitive $\cS_4''$-invariant 
$P:=x_1y_1+x_2y_2+x_3y_3+x_4y_4$ as in Section \ref{seS4A4}. 
Put $f_{\bs,\bs'}^{\D_4}(X):=f_\bs^{\D_4}(X)f_{\bs'}^{\D_4}(X)$ where 
$(\bs,\bs')=(s,t,s',t')$. 
Then the $\cS_4\times \cS_4'$-relative $\cS_4''$-invariant resolvent polynomial 
$\R_{\bs,\bs'}(X)$ of $f_{\bs,\bs'}^{\D_4}$ by $P$ splits as 
\begin{align*}
\R_{\bs,\bs'}(X):=
\RP_{P,\cS_4\times \cS_4',f_{\bs,\bs'}^{\D_4}}(X)
=\R_{\bs,\bs'}^{1}(X)\cdot\bigl(\R_{\bs,\bs'}^{2}(X)\bigr)^2
\end{align*}
where
\begin{align}
\R_{\bs,\bs'}^{1}(X):\!
&=\RP_{P,\D_4\times \D_4',f_{\bs,\bs'}^{\D_4}}(X)\nonumber\\
&=X^8-8s{s'}X^6+16(s^2{s'}^2+2t{s'}^2+2s^2{t'}-16t{t'})X^4\label{eqR1}\\
&\quad\ -128s{s'}(t{s'}^2+s^2{t'}-8t{t'})X^2+256(t{s'}^2-s^2{t'})^2,\nonumber\\
\R_{\bs,\bs'}^{2}(X):\!
&=X^8-4s{s'}X^6+2(3s^2{s'}^2-4t{s'}^2-4s^2{t'}-16t{t'})X^4\nonumber\\
&\quad\ -4s{s'}(s^2-4t)({s'}^2-4{t'})X^2+(s^2-4t)^2({s'}^2-4{t'})^2.\nonumber
\end{align}

Note that $P$ is regarded as $\D_4\times \D_4'$-primitive $\D_4''$-invariant in (\ref{eqR1}). 
The discriminant of $\R_{\bs,\bs'}^{1}(X)$ with respect to $X$ is given by 
\begin{align*}
2^{80}t^4{t'}^4(s^2-4t)^4({s'}^2-4{t'})^4(s^2{t'}-{s'}^2t)^2(s^2{s'}^2-4{s'}^2t-4s^2{t'})^4.
\end{align*}

For $\ba=(a,b)$, $\ba'=(a',b')\in M^2$ with $D_\ba\cdot D_{\ba'}\neq 0$, we put 
\[
L_\ba:=\Spl_M f_\ba^{\D_4}(X),\quad 
G_\ba:=\Gal(f_\ba^{\D_4}/M),\quad 
G_{\ba,\ba'}:=\Gal(f_{\ba,\ba'}^{\D_4}/M).
\]
Using Theorem \ref{thfun}, we obtain an answer to $\mathbf{Int}(f_{\bs}^{\D_4}/M)$ 
via resolvent polynomial $\R_{\bs,\bs'}^1(X)$ as follows:

\begin{theorem}\label{thD4}
For $\ba=(a,b)$, $\ba'=(a',b')\in M^2$ with $D_\ba\cdot D_{\ba'}\neq 0$, assume that 
both of $f_\ba^{\D_4}(X)$ and $f_{\ba'}^{\D_4}(X)$ are irreducible over $M$ and 
$\#G_{\ba}\geq \#G_{\ba'}$.
If $G_{\ba}=\D_4$ $($resp. $G_{\ba}=\C_4$ or $\V_4)$ then an answer to the 
intersection problem of $f_{s,t}^{\D_4}(X)$ is given by Table $3$ $($resp. by Table $4$$)$
according to ${\rm DT}(\R_{\ba,\ba'})$. 
\end{theorem}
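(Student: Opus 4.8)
The plan is to follow the pattern of Theorem \ref{thS4A4}: identify the Galois group $G_{\ba,\ba'}=\Gal(f_{\ba,\ba'}^{\D_4}/M)$, viewed as a subgroup of $\D_4\times\D_4'$, from the decomposition type of the $\D_4$-resolvent, and then recover $L_\ba\cap L_{\ba'}$ by the Galois correspondence. After interchanging $\ba$ and $\ba'$ if necessary we may assume $\#G_\ba\geq\#G_{\ba'}$. First I would record that, since $f_\ba^{\D_4}(X)$ and $f_{\ba'}^{\D_4}(X)$ are separable and irreducible over $M$, Lemma \ref{lem2} forces $G_\ba,G_{\ba'}\in\{\D_4,\C_4,\V_4\}$, and that $G_{\ba,\ba'}$ embeds into $\D_4\times\D_4'$ and surjects onto $G_\ba$ and onto $G_{\ba'}$ under the two projections, since $f_{\ba,\ba'}^{\D_4}(X)=f_\ba^{\D_4}(X)f_{\ba'}^{\D_4}(X)$. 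Because $P=x_1y_1+x_2y_2+x_3y_3+x_4y_4$ is a $(\D_4\times\D_4')$-primitive $\D_4''$-invariant ($\D_4''$ being the diagonal copy of $\D_4$), Theorem \ref{thfun} applied with $m=8$, $G=\D_4\times\D_4'$, $H=\D_4''$ and $f=f_{\ba,\ba'}^{\D_4}$ identifies the decomposition type over $M$ of $\R_{\ba,\ba'}^{1}(X)=\RP_{P,\D_4\times\D_4',f_{\ba,\ba'}^{\D_4}}(X)$ from (\ref{eqR1})---equivalently, the degree-$8$ part of $\mathrm{DT}(\R_{\ba,\ba'})$, the rest coming from the repeated factor $(\R_{\ba,\ba'}^{2}(X))^2$---with the partition of $[\D_4\times\D_4':\D_4'']=8$ given by the orbit lengths of the left action of $G_{\ba,\ba'}$ on $(\D_4\times\D_4')/\D_4''$, i.e.\ by the double cosets $G_{\ba,\ba'}\backslash(\D_4\times\D_4')/\D_4''$. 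This partition is well defined even when $\R_{\ba,\ba'}^{1}(X)$ acquires a repeated factor---which happens only on the proper closed locus cut out by the (explicitly given) discriminant of $\R_{\bs,\bs'}^{1}(X)$---since one then passes, as in Remark \ref{remGir}, to a Tschirnhausen transformation with the same splitting field, leaving the orbit partition unchanged.

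Next I would enumerate the candidates for $G_{\ba,\ba'}$ by Goursat's lemma: a subgroup of $\D_4\times\D_4'$ surjecting onto $G_\ba$ and onto $G_{\ba'}$ is a fibre product $G_\ba\times_Q G_{\ba'}$ determined by a common quotient $Q$ of $G_\ba$ and $G_{\ba'}$ together with a pair of surjections, taken modulo the identifications available between the two factors; the constraint $\#G_\ba\geq\#G_{\ba'}$ leaves only finitely many such data, with $Q$ running over $1,\C_2,\V_4,\C_4,\D_4$. For each one has $[L_\ba L_{\ba'}:M]=|G_{\ba,\ba'}|$, hence $[L_\ba\cap L_{\ba'}:M]=|G_\ba|\,|G_{\ba'}|/|G_{\ba,\ba'}|=|Q|$, and $L_\ba\cap L_{\ba'}$ is the fixed field of $\ker(G_{\ba,\ba'}\to Q)$; in the cases $|Q|=2$ it is therefore one of the three quadratic subfields $M(\sqrt{b})$, $M(\sqrt{(a^2-4b)/b})$, $M(\sqrt{a^2-4b})$ of $L_\ba$ listed in (\ref{3quad}) (and likewise inside $L_{\ba'}$). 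For each candidate group $G$ I would then compute the orbit partition of $G\backslash(\D_4\times\D_4')/\D_4''$---a routine finite-group computation, carried out as in the $\cS_4$ case with the command \texttt{DoubleCosetRepsAndSizes} in GAP \cite{GAP}---and tabulate it against the GAP identifier of $G$ and the value $|Q|$; this fills in Table $3$ when $G_\ba=\D_4$ and Table $4$ when $G_\ba=\C_4$ or $\V_4$. One then checks that $\mathrm{DT}(\R_{\ba,\ba'})$ determines $G_{\ba,\ba'}$, and hence $L_\ba\cap L_{\ba'}$, up to the few coincident rows flagged after the table, which are separated by comparing the quadratic subextensions of $L_\ba$ and $L_{\ba'}$ through (\ref{3quad}); and conversely every row is realized, since as $(\bs,\bs')$ varies the generic group $\D_4\times\D_4'$ degenerates onto each listed $G$, by genericity of $\R_{\bs,\bs'}^{1}(X)$ for $\D_4\times\D_4'$ (Theorems \ref{th-gen} and \ref{thgen}). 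Reading the table in reverse---from the decomposition type to $G_{\ba,\ba'}$ to $L_\ba\cap L_{\ba'}$---then yields the asserted solution of $\mathbf{Int}(f_\bs^{\D_4}/M)$.

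The hard part, I expect, is not any single argument but the completeness and correctness of the Goursat bookkeeping: $\D_4$ carries several conjugacy classes of subgroups in each small order and several outer self-identifications of its quotients, so one must verify both that the list of fibre products is exhaustive and---more delicately---that each GAP identifier is matched with the correct intersection field, in particular with which of the three quadratic fields in (\ref{3quad}) actually occurs. This is exactly where the auxiliary quadratic invariants become indispensable for disambiguating the GAP IDs that share a decomposition type, just as the quadratic subfields distinguish $\{$(I-6),(I-7),(I-8)$\}$ and $\{$(I-12),(I-13)$\}$ in Theorem \ref{thS4A4}.
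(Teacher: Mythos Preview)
Your proposal is correct and follows essentially the same approach as the paper: apply Theorem~\ref{thfun} with $G=\D_4\times\D_4'$, $H=\D_4''$ and $f=f_{\ba,\ba'}^{\D_4}$ to identify $\mathrm{DT}(\R_{\ba,\ba'}^{1})$ (and hence $\mathrm{DT}(\R_{\ba,\ba'})$) with the double-coset partition of $G_{\ba,\ba'}\backslash(\D_4\times\D_4')/\D_4''$, then tabulate via GAP's \texttt{DoubleCosetRepsAndSizes}. Your explicit use of Goursat's lemma to enumerate the fibre products and your observation that the ambiguous rows are separated by comparing the quadratic subfields in (\ref{3quad}) are exactly the content of the paper's Remark~\ref{rem6}, so nothing further is needed.
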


\begin{center}
{\rm Table} $3$\vspace*{3mm}\\
{\small
\begin{tabular}{|c|c|l|l|c|c|l|l|}\hline
$G_\ba$& $G_{\ba'}$ & & GAP ID & $G_{\ba,{\ba'}}$ & $$ 
& ${\rm DT}(\R_{\ba,\ba'}^{1})$ & ${\rm DT}(\R_{\ba,\ba'})$\\ \hline 
& & (II-1) & $[64,226]$ & $\D_4\times \D_4$ & $L_\ba\cap L_{\ba'}=M$ & $8$ & $16,8$\\ \cline{3-8} 
& & (II-2) & $[32,27]$ & $(\V_4\times \V_4)\rtimes \C_2$ 
& $[L_\ba\cap L_{\ba'}:M]=2$ & $8$ & $16,8$\\ \cline{3-8} 
& & (II-3) & $[32,27]$ & $(\V_4\times \V_4)\rtimes \C_2$ 
& $[L_\ba\cap L_{\ba'}:M]=2$ & $4,4$ & $16,4,4$\\ \cline{3-8}
& & (II-4) & $[32,27]$ & $(\V_4\times \V_4)\rtimes \C_2$ 
& $[L_\ba\cap L_{\ba'}:M]=2$ & $4,4$ & $8,8,4,4$\\ \cline{3-8}
& & (II-5) & $[32,28]$ & $(\C_4\times \V_4)\rtimes \C_2$ 
& $[L_\ba\cap L_{\ba'}:M]=2$ & $8$ & $16,8$\\ \cline{3-8}
& & (II-6) & $[32,34]$ & $(\C_4\times \C_4)\rtimes \C_2$ 
& $[L_\ba\cap L_{\ba'}:M]=2$ & $4,4$ & $16,4,4$\\ \cline{3-8}
& $\D_4$ 
& (II-7) & $[16,3]$ & $(\C_4\times \C_2)\rtimes \C_2$ 
& $[L_\ba\cap L_{\ba'}:M]=4$ & $8$ & $16,8$\\ \cline{3-8} 
& & (II-8) & $[16,3]$ & $(\C_4\times \C_2)\rtimes \C_2$ 
& $[L_\ba\cap L_{\ba'}:M]=4$ & $4,4$ & $16,4,4$\\ \cline{3-8}
& & (II-9) & $[16,3]$ & $(\C_4\times \C_2)\rtimes \C_2$ 
& $[L_\ba\cap L_{\ba'}:M]=4$ & $4,4$ & $8,8,4,4$\\ \cline{3-8}
& & (II-10) & $[16,11]$ & $\D_4\times \C_2$ 
& $[L_\ba\cap L_{\ba'}:M]=4$ & $4,4$ & $16,4,4$\\ \cline{3-8}
& & (II-11) & $[16,11]$ & $\D_4\times \C_2$ 
& $[L_\ba\cap L_{\ba'}:M]=4$ & $2,2,2,2$ & $8^2,2^4$\\ \cline{3-8}
$\D_4$ 
& & (II-12) & $[8,3]$ & $\D_4$ & $L_\ba=L_{\ba'}$ & $4,2,2$ & $8,8,4,2,2$\\ \cline{3-8}
& & (II-13) & $[8,3]$ & $\D_4$ & $L_\ba=L_{\ba'}$ & $2,2,2,1,1$ & $8,4^2,2^3,1^2$\\ \cline{2-8}
& & (II-14) & $[32,25]$ & $\D_4\times \C_4$ & $L_\ba\cap L_{\ba'}=M$ & $8$ & $16,8$\\ \cline{3-8}
& \raisebox{-1.6ex}[0cm][0cm]{$\C_4$} 
  & (II-15) & $[16,3]$ & $(\C_4\times \C_2)\rtimes \C_2$ 
& $[L_\ba\cap L_{\ba'}:M]=2$ & $4,4$ & $16,4,4$\\ \cline{3-8}
& & (II-16) & $[16,3]$ & $(\C_4\times \C_2)\rtimes \C_2$ 
& $[L_\ba\cap L_{\ba'}:M]=2$ & $4,4$ & $8,8,4,4$\\ \cline{3-8}
& & (II-17) & $[16,4]$ & $\C_4\rtimes \C_4$ 
& $[L_\ba\cap L_{\ba'}:M]=2$ & $8$ & $16,8$\\ \cline{2-8}
& & (II-18) & $[32,46]$ & $\D_4\times \V_4$ 
& $L_\ba\cap L_{\ba'}=M$ & $8$ & $8,8,8$\\ \cline{3-8} 
& & (II-19) & $[16,11]$ & $\D_4\times \C_2$ 
& $[L_\ba\cap L_{\ba'}:M]=2$ & $8$ & $8,8,8$\\ \cline{3-8}
& \raisebox{-1.6ex}[0cm][0cm]{$\V_4$} 
  & (II-20) & $[16,11]$ & $\D_4\times \C_2$ 
& $[L_\ba\cap L_{\ba'}:M]=2$ & $8$ & $8,8,4,4$\\ \cline{3-8}
& & (II-21) & $[16,11]$ & $\D_4\times \C_2$ 
& $[L_\ba\cap L_{\ba'}:M]=2$ & $4,4$ & $8,8,4,4$\\ \cline{3-8}
& & (II-22) & $[8,3]$ & $\D_4$ 
& $L_\ba\supset L_{\ba'}$ & $8$ & $8,4,4,4,4$\\ \cline{3-8}
& & (II-23) & $[8,3]$ & $\D_4$ 
& $L_\ba\supset L_{\ba'}$ & $4,4$ & $8,4,4,4,4$\\ \cline{1-8}
\end{tabular}
}\vspace*{5mm}
\end{center}
\begin{center}
{\rm Table} $4$\vspace*{3mm}\\
{\small 
\begin{tabular}{|c|c|l|l|c|c|l|l|}\hline
$G_\ba$& $G_{\ba'}$ & & GAP ID & $G_{\ba,{\ba'}}$ & $[L_\ba\cap L_{\ba'}:M]$ 
& ${\rm DT}(\R_{\ba,\ba'}^{1})$ & ${\rm DT}(\R_{\ba,\ba'})$\\ \hline 
& & (III-1) & $[16,2]$ & $\C_4\times \C_4$ 
& $L_\ba\cap L_{\ba'}=M$ & $4,4$ & $16,4,4$\\ \cline{3-8} 
& $\C_4$ 
& (III-2) & $[8,2]$ & $\C_4\times \C_2$ 
& $[L_\ba\cap L_{\ba'}:M]=2$ & $2,2,2,2$ & $8^2,2^4$\\ \cline{3-8} 
\raisebox{-1.6ex}[0cm][0cm]{$\C_4$} 
& & (III-3) & $[4,1]$ & $\C_4$ 
& $L_\ba=L_{\ba'}$ & $2^2,1^4$ & $4^4,2^2,1^4$\\ \cline{2-8}
& & (III-4) & $[16,10]$ & $\C_4\times \V_4$ 
& $L_\ba\cap L_{\ba'}=M$ & $8$ & $8,8,8$\\ \cline{3-8} 
& $\V_4$ 
& (III-5) & $[8,2]$ & $\C_4\times \C_2$ 
& $[L_\ba\cap L_{\ba'}:M]=2$ & $8$ & $8,8,4,4$\\ \cline{3-8} 
& & (III-6) & $[8,2]$ & $\C_4\times \C_2$ 
& $[L_\ba\cap L_{\ba'}:M]=2$ & $4,4$ & $8,8,4,4$\\ \cline{1-8}
& & (III-7) & $[16,10]$ & $\V_4\times \C_4$ 
& $L_\ba\cap L_{\ba'}=M$ & $8$ & $8,8,8$\\ \cline{3-8} 
& $\C_4$ 
  & (III-8) & $[8,2]$ & $\C_2\times \C_4$ 
& $[L_\ba\cap L_{\ba'}:M]=2$ & $8$ & $8,8,4,4$\\ \cline{3-8} 
& & (III-9) & $[8,2]$ & $\C_2\times \C_4$ 
& $[L_\ba\cap L_{\ba'}:M]=2$ & $4,4$ & $8,8,4,4$\\ \cline{2-8}
& & (III-10) & $[16,14]$ & $\V_4\times \V_4$ 
& $L_\ba\cap L_{\ba'}=M$ & $4,4$ & $4^6$\\ \cline{3-8} 
$\V_4$ & & (III-11) & $[8,5]$ & $\V_4\times \C_2$ 
& $[L_\ba\cap L_{\ba'}:M]=2$ & $4,4$ & $4^4,2^4$\\ \cline{3-8} 
& \raisebox{-1.6ex}[0cm][0cm]{$\V_4$} 
  & (III-12) & $[8,5]$ & $\V_4\times \C_2$ 
& $[L_\ba\cap L_{\ba'}:M]=2$ & $4,2,2$ & $4^4,2^4$\\ \cline{3-8}
& & (III-13) & $[8,5]$ & $\V_4\times \C_2$ 
& $[L_\ba\cap L_{\ba'}:M]=2$ & $2,2,2,2$ & $4^4,2^4$\\ \cline{3-8} 
& & (III-14) & $[4,2]$ & $\V_4$ 
& $L_\ba=L_{\ba'}$ & $4,2,2$ & $4^2,2^6,1^4$\\ \cline{3-8} 
& & (III-15) & $[4,2]$ & $\V_4$ 
& $L_\ba=L_{\ba'}$ & $2^2,1^4$ & $4^2,2^6,1^4$\\ \cline{1-8}
\end{tabular}
}\vspace*{5mm}
\end{center}
\begin{remark}\label{rem6}
By comparing six fields $M(\sqrt{b})$, $M(\sqrt{(a^2-4b)/b})$, $M(\sqrt{a^2-4b})$, 
$M(\sqrt{b'})$, $M(\sqrt{(a'^2-4b')/b'})$ and $M(\sqrt{a'^2-4b'})$ each of which is a 
quadratic extension of $M$ or coincides with $M$ as in (\ref{3quad}) (cf. Lemma \ref{lem2}), 
we may distinguish all cases in Table $3$ and Table $4$ except for 
$\{$(II-7),\ldots,(II-13)$\}$ and $\{$(III-2),(III-3)$\}$. 
\end{remark}
\begin{remark}
In the case of $G_\ba=G_{\ba'}=\D_4$, the decomposition type $4$, $2$, $2$ of 
$\R_{\ba,\ba'}^1(X)$ over $M$ means that the splitting fields $L_\ba$ and $L_{\ba'}$ 
coincide and the quotient fields $M[X]/(f_\ba^{\D_4}(X))$ and $M[X]/(f_{\ba'}^{\D_4}(X))$ 
are not $M$-isomorphic (cf. Theorem \ref{throotf} and Lemma \ref{lemRF}). 
\end{remark}

\subsection{Isomorphism problems of $f_\bs^{\D_4}(X)=X^4+sX^2+t$}\label{seIsoD4}
~\\

We treat the problems $\mathbf{Isom}(f_\bs^{\D_4}/M)$ and 
$\mathbf{Isom}^\infty(f_\bs^{\D_4}/M)$ more explicitly because by Theorem \ref{thD4} 
we can not clearly see the existence of $\ba'\in M^2$, $(\ba'\neq\ba)$ which satisfies 
$\mathrm{Spl}_M(f_\ba^{\D_4}(X))=\mathrm{Spl}_M(f_{\ba'}^{\D_4}(X))$, 
i.e., $\R_{\ba,\ba'}(X)$ has a linear factor or DT($\R_{\ba,\ba'}^1(X))$ is $4,2,2$. 

The problem $\mathbf{Isom}(f_\bs^{\D_4}/M)$ was investigated by van der Ploeg \cite{Plo87} 
in the case $M=\mathbb{Q}$ and $\Gal(f_{\ba}^{\D_4}/\mathbb{Q})=\D_4$ (see Lemma 
\ref{lemD4rf} below) to explain Shanks' incredible identities \cite{Sha74}. 
We study $\mathbf{Isom}(f_\bs^{\D_4}/M)$ for general $M\supset k$ and 
$\Gal(f_{\ba}^{\D_4}/M)\leq \D_4$ via formal Tschirnhausen transformation 
which is given in Section \ref{sePre}. 

For $f_\bs^{\D_4}(X)=X^4+sX^2+t$, the problem $\mathbf{Isom}^\infty(f_\bs^{\D_4}/M)$ 
has a trivial solution because for an arbitrary $c\in M$, $f_{a,b}^{\D_4}(X)$ and 
$f_{a',b'}^{\D_4}(X)=f_{ac^2,bc^4}^{\D_4}(X)=f_{a,b}^{\D_4}(X/c)\cdot c^4$ have 
the same splitting field over the infinite field $M$. 
Indeed, we have $\Spl_M f_{a,b}^{\D_4}(X)=\Spl_M f_{a',b'}^{\D_4}(X)$ for $a',b'\in M$ 
with $a^2b'={a'}^2b$ and $b'/b=c^4, c\in M$. 
Thus for $f_\bs^{\D_4}(X)$ we consider the refined question: 
\begin{center}
{\bf ${\mathbf{Isom}^{\infty}}'(f_\bs^{\D_4}/M)$} : for a given $a,b\in M$, 
are there infinitely many\\
\hspace*{4.4cm} $a',b'\in M$ with $a^2b'-{a'}^2b\neq0$ or $b'/b\neq c^4$, $(c\in M)$\\
\hspace*{2.4cm} such that $\Spl_M f_{a,b}^{\D_4}(X)=\Spl_M f_{a',b'}^{\D_4}(X)$\,? 
\end{center}

We take the formal Tschirnhausen coefficients $u_i=u_i(\bx,\by)\in k(\bx,\by)$, $(i=0,1,2,3)$ 
which is defined in $(\ref{defu})$. 
Then the element $u_i$, $(i=0,1,2,3)$, 
becomes an $\cS_4\times\cS_4'$-primitive $\cS_4''$-invariant 
and we may take the corresponding resolvent polynomials 
\[
F_{\bs,\bs'}^i(X):=\RP_{u_i,\cS_4\times\cS_4',f_{\bs,\bs'}^{\D_4}}(X),\quad 
F_{\bs,\bs'}^{i,1}(X):=\RP_{u_i,\D_4\times\D_4',f_{\bs,\bs'}^{\D_4}}(X),\quad (i=0,1,2,3).
\]
In the latter case, we regard the $u_i$'s as $\D_4\times\D_4'$-primitive $\D_4''$-invariants. 
Now we put
\[
(d,d'):=(s^2-4t,{s'}^2-4t').
\]
Then by the definition we may evaluate the resolvent polynomial $F_{\bs,\bs'}^i(X)$, 
$(i=0,1,2,3)$, which splits as 
\begin{align*}
F_{\bs,\bs'}^0(X)&=X^8\Bigl(X^4+\frac{s^2{s'}}{2d}X^2+\frac{s^4d'}{16d^2}\Bigr)^4,&
F_{\bs,\bs'}^1(X)&=F_{\bs,\bs'}^{1,1}(X)\cdot (F_{\bs,\bs'}^{1,2}(X))^2,\\
F_{\bs,\bs'}^2(X)&=X^8\Bigl(X^4+\frac{2{s'}}{d}Z^2+\frac{d'}{d^2}\Bigr)^4,&
F_{\bs,\bs'}^3(X)&=F_{\bs,\bs'}^{3,1}(X)\cdot (F_{\bs,\bs'}^{3,2}(X))^2
\end{align*}
where 
\begin{align}
F_{\bs,\bs'}^{0,1}(X)&=F_{\bs,\bs'}^{2,1}(X)=X^8,\\
F_{\bs,\bs'}^{1,1}(X)&=X^8-\frac{2s{s'}(s^2-3t)}{dt}X^6\nonumber\\
&+\frac{s^6{s'}^2-6s^4{s'}^2t+9s^2{s'}^2t^2+2{s'}^2t^3+2s^6{t'}-12s^4t{t'}+18s^2t^2{t'}
-16t^3{t'}}{d^2t^2}X^4\nonumber\\
&-\frac{2s{s'}(s^2-3t)({s'}^2t^3+s^6{t'}-6s^4t{t'}+9s^2t^2{t'}-8t^3{t'})}{d^3t^3}X^2\nonumber\\
&+\frac{(-{s'}^2t^3+s^6{t'}-6s^4t{t'}+9s^2t^2{t'})^2}{d^4t^4},\nonumber\nonumber\\
F_{\bs,\bs'}^{1,2}(X)&=X^8-\frac{s{s'}(s^2-3t)}{dt}X^6\nonumber\\
&+\frac{3s^6{s'}^2-18s^4{s'}^2t+27s^2{s'}^2t^2-4{s'}^2t^3-4s^6{t'}+24s^4t{t'}-36s^2t^2{t'}-
16t^3{t'}}{8d^2t^2}X^4\nonumber\\
&-\frac{s{s'}(s^2-3t)(s^2-t)^2d'}{16d^2t^3}X^2+\frac{(s^2-t)^4d'^2}{256d^2t^4},\nonumber\\
F_{\bs,\bs'}^{3,1}(X)&=X^8-\frac{2s{s'}}{dt}X^6
+\frac{s^2{s'}^2+2{s'}^2t+2s^2{t'}-16t{t'}}{d^2t^2}X^4
-\frac{2s{s'}({s'}^2t+s^2{t'}-8t{t'})}{d^3t^3}X^2\nonumber\\
&+\frac{(s^2{t'}-{s'}^2t)^2}{d^4t^4},\nonumber\\
F_{\bs,\bs'}^{3,2}(X)&=X^8-\frac{s{s'}}{dt}X^6
+\frac{3s^2{s'}^2-4{s'}^2t-4s^2{t'}-16t{t'}}{8d^2t^2}X^4
-\frac{s{s'}d'}{16d^2t^3}X^2+\frac{{d'}^2}{256d^2t^4}.\nonumber
\end{align}
The discriminants of $F_{\bs,\bs'}^{1,1}(X)$ and of $F_{\bs,\bs'}^{3,1}(X)$ 
with respect to $X$ are given by
\begin{align*}
\mathrm{disc}(F_{\bs,\bs'}^{1,1}(X))&=\frac{2^{24}{t'}^4(s^2-t)^8({s'}^2-4{t'})^4
(s^6{t'}-6s^4t{t'}+9s^2t^2{t'}-{s'}^2t^3)^2}
{(s^2-4t)^{24}t^{16}}\\
&\quad \cdot (s^6{s'}^2-6s^4{s'}^2t+9s^2{s'}^2t^2-4{s'}^2t^3-4s^6{t'}
+24s^4t{t'}-36s^2t^2{t'})^4,\\
\mathrm{disc}(F_{\bs,\bs'}^{3,1}(X))&=\frac{2^{24}{t'}^4({s'}^2-4{t'})^4
(s^2{t'}-{s'}^2t)^2(s^2{s'}^2-4{s'}^2t-4s^2{t'})^4}{(s^2-4t)^{24}t^{24}}.
\end{align*}

For $\ba,\ba'\in M^2$ with $D_\ba\cdot D_{\ba'}\neq 0$, we 
assume that $f_{\ba}^{\D_4}(X)$ and $f_{\ba'}^{\D_4}(X)$ are irreducible over $M$ 
and we write 
\[
G_\ba:=\Gal(f_\ba^{\D_4}/M),\quad G_{\ba'}:=\Gal(f_{\ba'}^{\D_4}/M).
\]
From Lemma \ref{lemMM}, two fields $M[X]/(f_\ba^{\D_4}(X))$ and 
$M[X]/(f_{\ba'}^{\D_4}(X))$ are $M$-isomorphic if and only if there exist $x,y,z,w\in M$ 
such that 
\begin{align}
f_{\ba'}^{\D_4}(X)&=R'(x,y,z,w,a,b;X)\label{eqD4R}\\
&:=\mathrm{Resultant}_X(f_{\ba}^{\D_4}(X),X-(x+yY+zY^2+wY^3))\nonumber
\end{align}
where
\begin{align}
(x,y,z,w)=\omega_{f_{\ba,\ba'}}(\pi(u_0),\pi(u_1),\pi(u_2),\pi(u_3))\quad 
\mathrm{for\ some}\ \ \pi\in\cS_4\times\cS_4'. 
\end{align}
\begin{lemma}\label{lemTTD4}
For $\ba=(a,b)$, $\ba'=(a',b')\in M^2$ with $D_\ba\cdot D_{\ba'}\neq 0$, we assume that 
$\C_4\leq G_\ba, G_{\ba'}$. 
If $M[X]/(f_{\ba}^{\D_4}(X))\cong_M M[X]/(f_{\ba'}^{\D_4}(X))$ then $(x,z)=(0,0)$. 
Namely $f_{\ba'}^{\D_4}(X)$ is obtained from $f_{\ba}^{\D_4}(X)$ by Tschirnhausen transformation 
of the form $yX+wX^3$. 
\end{lemma}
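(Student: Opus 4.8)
The plan is to combine the even structure of the quartics $X^4+sX^2+t$ with elementary Galois theory of quartic fields. Write $L:=M[X]/(f_\ba^{\D_4}(X))=M(\alpha)$, where $\alpha$ denotes the image of $X$, a root of $f_\ba^{\D_4}(X)$; since $f_\ba^{\D_4}(X)$ is irreducible over $M$, $[L:M]=4$ and $\{1,\alpha,\alpha^2,\alpha^3\}$ is an $M$-basis of $L$. By Lemma \ref{lemMM}, the hypothesis $M[X]/(f_\ba^{\D_4}(X))\cong_M M[X]/(f_{\ba'}^{\D_4}(X))$ provides $(x,y,z,w)\in M^4$ with $f_{\ba'}^{\D_4}(X)=\mathrm{Resultant}_Y\!\big(f_\ba^{\D_4}(Y),\,X-(x+yY+zY^2+wY^3)\big)$ as in \eqref{eqD4R}; in particular $\beta:=x+y\alpha+z\alpha^2+w\alpha^3$ is a root of the irreducible polynomial $f_{\ba'}^{\D_4}(X)$, so $L=M(\alpha)=M(\beta)$. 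The entire lemma now comes down to the single assertion $\epsilon(\beta)=-\beta$, where $\epsilon$ is the $M$-automorphism of $L$ determined by $\alpha\mapsto-\alpha$: comparing $\epsilon(\beta)=x-y\alpha+z\alpha^2-w\alpha^3$ with $-\beta$ in the above basis yields $2x=2z=0$, hence $x=z=0$ because $\mathrm{char}\,k\neq2$, and then \eqref{eqD4R} exhibits $f_{\ba'}^{\D_4}(X)$ as the Tschirnhausen transform of $f_\ba^{\D_4}(X)$ by $yX+wX^3$.

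First I would check that $\epsilon$ is a genuine automorphism: $f_\ba^{\D_4}(X)$ is even, so $-\alpha$ has the same minimal polynomial as $\alpha$ over $M$, whence $\alpha\mapsto-\alpha$ extends to an $M$-isomorphism $M(\alpha)\to M(-\alpha)=L$ of order $2$ (note $\alpha\neq-\alpha$ since $\mathrm{char}\,k\neq2$ and $f_\ba^{\D_4}$ is separable); moreover $\epsilon(\beta)$ is again a root of $f_{\ba'}^{\D_4}(X)\in M[X]$, and $\beta^2\notin M$ and $\alpha^2\notin M$ (otherwise $[L:M]\le2$). To prove $\epsilon(\beta)=-\beta$ I would use that $L$ has a \emph{unique} quadratic subfield: $\beta^2$ is a root of the quadratic $T^2+a'T+b'$ and $\alpha^2$ a root of $T^2+aT+b$, so $M(\beta^2)$ and $M(\alpha^2)$ are both quadratic subfields of $L$, hence equal; as $\epsilon$ fixes $\alpha^2$ it fixes $M(\alpha^2)=M(\beta^2)$ pointwise, so $\epsilon(\beta)^2=\epsilon(\beta^2)=\beta^2$ and therefore $\epsilon(\beta)=\pm\beta$. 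Finally $\epsilon(\beta)=\beta$ is impossible, since it would force $L=M(\beta)\subseteq L^{\langle\epsilon\rangle}$, a subfield of degree $2$; hence $\epsilon(\beta)=-\beta$.

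The main obstacle — and the only place the hypothesis $\C_4\leq G_\ba$ is used — is the uniqueness of the quadratic subfield of $L$. For $G_\ba=\C_4$ this is clear, as the cyclic group $\C_4$ has a unique subgroup of index $2$. For $G_\ba=\D_4$, the field $L$ corresponds, inside the Galois closure, to a non-normal subgroup of order $2$ of $\D_4$, and a short inspection of the subgroup lattice of $\D_4$ shows that every such subgroup lies in exactly one subgroup of order $4$; this is precisely what fails when $G_\ba=\V_4$ (which is excluded by $\C_4\leq G_\ba$, i.e.\ $\sqrt b\notin M$ by Lemma \ref{lem2}), where all three of $M(\sqrt b)$, $M(\sqrt{(a^2-4b)/b})$, $M(\sqrt{a^2-4b})$ become distinct quadratic subfields. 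With this in hand the remainder of the argument is the one-line basis comparison above, and the stated normal form $yX+wX^3$ follows at once.
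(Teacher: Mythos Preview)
Your argument is correct and is a genuinely different route from the paper's. The paper proceeds by brute computation: comparing the $X^3$-coefficient of the resultant $R'(x,y,z,w,a,b;X)$ forces $x=az/2$, and after substituting this, the $X$-coefficient becomes $-(a^2-4b)(a^2w^2-bw^2-2awy+y^2)z$; since $a^2-4b\neq0$, one must have $z=0$ or $b=((aw-y)/w)^2$, the latter contradicting $\C_4\le G_\ba$ via Lemma~\ref{lem2}. (The paper also sketches a first argument via the resolvent factors $F^{0,1}_{\bs,\bs'}=F^{2,1}_{\bs,\bs'}=X^8$.) Your proof instead exploits the structural fact that $M(\alpha)$ has a \emph{unique} quadratic subfield whenever $G_\ba\in\{\C_4,\D_4\}$, forcing the involution $\alpha\mapsto-\alpha$ to send $\beta$ to $-\beta$; this is cleaner and explains conceptually why an odd Tschirnhausen transformation suffices. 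What the paper's computation buys is the explicit shape of $R'(0,y,0,w,a,b;X)$, which is reused immediately in Lemma~\ref{lemD4rf}, and it pinpoints exactly how the $\V_4$ case slips through (the factor $a^2w^2-bw^2-2awy+y^2$ can vanish when $\sqrt b\in M$). One small inaccuracy in your final parenthetical: when $G_\ba=\V_4$ one has $\sqrt b\in M$, so the three quadratic subfields of $L=M(\alpha)$ are $M(\sqrt{a^2-4b})$, $M(\sqrt{-a+2\sqrt b})$, $M(\sqrt{-a-2\sqrt b})$, not the triple you list (those are the quadratic subfields of the $\D_4$ splitting field); this does not affect your proof.
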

\begin{proof}
By Theorem \ref{throotf}, two fields 
$M[X]/(f_\ba^{\D_4}(X))$ and $M[X]/(f_{\ba'}^{\D_4}(X))$ are 
$M$-isomorphic if and only if $\mathrm{DT}(F_{\bs,\bs'}^i(X))$ includes $1$. 
It follows from the assumption $\C_4\leq G_\ba, G_{\ba'}$ 
and Tables $3$ and $4$ of Theorem \ref{thD4} that $\mathrm{DT}(F_{\bs,\bs'}^i(X))$ includes $1$ 
if and only if $\mathrm{DT}(F_{\bs,\bs'}^{i,1}(X))$ includes $1$. 
Thus we see that $\pi\in\D_4\times\D_4'$ and $(x,z)=(0,0)$ by 
$F_{\bs,\bs'}^{0,1}(X)=F_{\bs,\bs'}^{2,1}(X)=X^8$. 

We see that $(x,z)=(0,0)$ directly as follows: 
The coefficient of $X^3$ of $R'(x,y,z,w,a,b;X)$ equals $-2(2x-az)$. 
Hence by comparing the coefficient of $X^3$ in (\ref{eqD4R}), we see $x=az/2$. 
We also get
\begin{align*}
&R'(az/2,y,z,w,a,b;X)=\\
&X^4+\bigl((2a^3w^2-6abw^2-4a^2wy+8bwy+2ay^2-a^2z^2+4bz^2)/2\bigr)X^2\\
&\hspace*{5.5mm}-(a^2-4b)(a^2w^2-bw^2-2awy+y^2)zX+(16b^3w^4-32ab^2w^3y+16a^2bw^2y^2\\
&\hspace*{5.5mm}+32b^2w^2y^2-32abwy^3+16by^4+4a^5w^2z^2-28a^3bw^2z^2+48ab^2w^2z^2-8a^4wyz^2\\
&\hspace*{5.5mm}+48a^2bwyz^2-64b^2wyz^2+4a^3y^2z^2-16aby^2z^2+a^4z^4-8a^2bz^4+16b^2z^4)/16.
\end{align*}
Hence, by comparing the coefficient of $X$ in (\ref{eqD4R}), we have 
\[
(a^2-4b)(a^2w^2-bw^2-2awy+y^2)z=0.
\]
It follows from the assumption $D_\ba=16b(a^2-4b)^2\neq 0$ that $a^2-4b\neq 0$. 
If $(a^2w^2-bw^2-2awy+y^2)=0$ and $w\neq 0$, then $b=\bigl((aw-y)/w\bigr)^2$ is square in $M$. 
This contradicts $\C_4\leq G_\ba$. 
If $(a^2w^2-bw^2-2awy+y^2)=0$ and $w=0$ then we have $y=0$. 
This contradicts $\C_4\leq G_{\ba'}$ because 
$f_{\ba'}^{\D_4}(X)=R'(az/2,0,z,0,a,b;X)=(X^2+bz^2-(a^2z^2/4))^2$. 

Hence we conclude that $(x,z)=(0,0)$ from the assumption $\C_4\leq G_\ba, G_{\ba'}$. 
\end{proof}
\begin{remark}
From the proof, we see that Lemma \ref{lemTTD4} is not true in general for $G_\ba=\V_4$, 
because the case where $a^2w^2-bw^2-2awy+y^2=0$ and $w\neq 0$ occurs. 
This case corresponds to the case of (III-14) on Table $4$ of Theorem \ref{thD4}.
\end{remark}

Van der Ploeg \cite{Plo87} showed the following result when $M=\mathbb{Q}$ and $G_\ba=\D_4$. 

\begin{lemma}\label{lemD4rf}
For $\ba=(a,b)$, $\ba'=(a',b')\in M^2$ with $D_\ba\cdot D_{\ba'}\neq 0$, we assume that 
$\C_4\leq G_\ba, G_{\ba'}$. 
Then the following conditions are equivalent\,{\rm :}\\
{\rm (i)} $M[X]/(X^4+aX^2+b)\cong_M M[X]/(X^4+a'X^2+b')\,;$\\
{\rm (ii)} there exist $y,w\in M$ such that 
\[
a'= a^3w^2-3abw^2-2a^2wy+4bwy+ay^2,\quad b'=b(bw^2-awy+y^2)^2\,;
\]
{\rm (iii)} there exist $y',w\in M$ such that 
\[
a'=abw^2-4bw{y'}+a{y'}^2,\quad b'=b(bw^2-aw{y'}+{y'}^2)^2.
\]
Moreover, if $a^2b'-{a'}^2b\neq0$ or $b'/b$ is not a fourth power in $M$, 
then the conditions above are equivalent to\\
{\rm (iv)} there exist $u,w\in M$ such that 
\[
a'= (a^3-3ab-2a^2u+4bu+au^2)w^2,\quad b'=b(b-au+u^2)^2w^4.
\] 
\end{lemma}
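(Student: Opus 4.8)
The plan is to reduce the statement to the explicit shape of the admissible Tschirnhausen transformations, which Lemma~\ref{lemTTD4} has already cut down to the form $yX+wX^3$. For \textbf{(i)~$\Leftrightarrow$~(ii)} I would start from Lemma~\ref{lemMM}: the $M$-algebras $M[X]/(f_\ba^{\D_4}(X))$ and $M[X]/(f_{\ba'}^{\D_4}(X))$ are $M$-isomorphic exactly when $f_{\ba'}^{\D_4}(X)=R'(x,y,z,w,a,b;X)$ for some $x,y,z,w\in M$, and by Lemma~\ref{lemTTD4} the hypothesis $\C_4\leq G_\ba,G_{\ba'}$ forces $x=z=0$ there. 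So the key computation is $R'(0,y,0,w,a,b;X)=\mathrm{Resultant}_Y\bigl(f_\ba^{\D_4}(Y),\,X-(yY+wY^3)\bigr)$. Since $f_\ba^{\D_4}$ is even and $yY+wY^3$ is odd, the multiset of values $\{\,y\alpha_i+w\alpha_i^3\,\}$ is stable under $\beta\mapsto-\beta$, so this resultant is automatically an even quartic; writing $f_\ba^{\D_4}(X)=(X^2-x_1^2)(X^2-x_2^2)$ with $x_1^2+x_2^2=-a$, $x_1^2x_2^2=b$ and using the power sums $x_1^4+x_2^4=a^2-2b$, $x_1^6+x_2^6=-a^3+3ab$, I would read off $R'(0,y,0,w,a,b;X)=X^4+a'X^2+b'$ with exactly the $a',b'$ of (ii) (alternatively one just specialises to $x=z=0$ in the formula for $R'(az/2,y,z,w,a,b;X)$ displayed in the proof of Lemma~\ref{lemTTD4}). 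Comparing with $X^4+a'X^2+b'$ gives (i)$\Rightarrow$(ii), and conversely (ii) presents $f_{\ba'}^{\D_4}(X)$ as such a resultant, hence as a Tschirnhausen transformation of $f_\ba^{\D_4}(X)$ over $M$, so Lemma~\ref{lemMM} returns the $M$-isomorphism.

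For \textbf{(ii)~$\Leftrightarrow$~(iii)} I would note that, with $w$ fixed, $y\mapsto aw-y$ is an involution of $M$, and that the substitution $y=aw-y'$ carries the two formulas of (ii) into those of (iii) by a one-line expansion; hence the two existence statements coincide. (Conceptually, $(aw-y)X+wX^3$ is the Tschirnhausen transformation sending $x_i$ to $-x_i\bigl(y+wx_j^2\bigr)$ with $\{i,j\}=\{1,2\}$, which again yields a polynomial of the shape $X^4+a'X^2+b'$.)

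For the \textbf{``moreover'' part (ii)~$\Leftrightarrow$~(iv)}, observe that $w=0$ in (ii) gives $a'=ay^2$ and $b'=by^4$, whence $a^2b'-{a'}^2b=0$ and $b'/b=y^4$ is a fourth power of an element of $M$; therefore, under the extra hypothesis, every pair $(y,w)$ realising (ii) has $w\neq0$, and then $u:=y/w$ turns (ii) into (iv), while conversely $y:=uw$ turns (iv) into (ii) (here $w\neq0$ in (iv) is automatic, since $w=0$ would force $a'=b'=0$, contradicting $D_{\ba'}\neq0$).

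The only genuinely delicate point is the bookkeeping in the last paragraph: I must make sure that the degenerate case $w=0$ of (ii) is \emph{precisely} the locus removed by the conditions ``$a^2b'-{a'}^2b\neq0$ or $b'/b$ not a fourth power'', and that (iv) cannot itself degenerate. Everything else is the resultant computation of the first paragraph together with the two substitutions, all of which are routine polynomial identities.
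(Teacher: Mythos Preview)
Your proof is correct and follows the paper's own argument essentially step for step: the paper also invokes Lemma~\ref{lemTTD4} to force $x=z=0$, records the resultant $R'(0,y,0,w,a,b;X)=X^4+(a^3w^2-3abw^2-2a^2wy+4bwy+ay^2)X^2+b(bw^2-awy+y^2)^2$ to get (i)$\Leftrightarrow$(ii), uses the substitution $y':=aw-y$ for (ii)$\Leftrightarrow$(iii), and puts $u:=y/w$ for (iv) after observing that the extra hypothesis forces $w\neq0$. Your version is simply more explicit about the bookkeeping (in particular your check that $w=0$ in (ii) yields exactly $a^2b'={a'}^2b$ and $b'/b=y^4$, and that $w=0$ in (iv) would contradict $D_{\ba'}\neq0$), which the paper leaves implicit.
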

\begin{proof}
The equivalence of (i) and (ii) follows from Lemma \ref{lemTTD4} and
\begin{align*}
&R'(0,y,0,w,a,b;X)\\
&=X^4+(a^3w^2-3abw^2-2a^2wy+4bwy+ay^2)X^2+b(bw^2-awy+y^2)^2.
\end{align*}
By putting $y':=aw-y$, the equivalence of (ii) and (iii) follows. 
If $a^2b'-{a'}^2b\neq0$ or $b'/b\neq c^4$, $(c\in M)$ then $w\neq 0$. 
Hence the condition (iv) is obtained by putting $u:=y/w$ in (ii). 
\end{proof}
By Lemma \ref{lemRF} and Lemma \ref{lemD4rf} we obtain an answer to 
$\mathbf{Isom}(f_\bs^{\D_4}/M)$:
\begin{proposition}[An answer to $\mathbf{Isom}(f_\bs^{\D_4}/M)$]\label{lemD4spl}
For $\ba=(a,b)$, $\ba'=(a',b')\in M^2$ with $D_\ba\cdot D_{\ba'}\neq 0$, 
we assume that $G_\ba=\D_4$. 
Then $\Spl_M (X^4+aX^2+b)=\Spl_M (X^4+a'X^2+b')
$ if and only if 
there exist $p,q\in M$ such that either 
\begin{align*}
a'&=ap^2-4bpq+abq^2,\quad b'=b(p^2-apq+bq^2)^2\quad \mathrm{or}\\
a'&=2(ap^2-4bpq+abq^2),\quad b'=(a^2-4b)(p^2-bq^2)^2.
\end{align*}
\end{proposition}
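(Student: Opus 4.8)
The plan is to combine Lemma~\ref{lemRF} with Lemma~\ref{lemD4rf}. Since $G_\ba=\D_4$, Lemma~\ref{lemRF} tells us that $\Spl_M (X^4+aX^2+b)=\Spl_M (X^4+a'X^2+b')$ holds if and only if the $M$-algebra $M[X]/(f_{\ba'}^{\D_4}(X))$ is $M$-isomorphic to $M[X]/(f_\ba^{\D_4}(X))$ or to $M[X]/(f_{\tilde\ba}^{\D_4}(X))$, where $\tilde\ba:=(2a,a^2-4b)$, so that $f_{\tilde\ba}^{\D_4}(X)=g_\ba^{\D_4}(X)=X^4+2aX^2+(a^2-4b)$ in the notation of Subsection~\ref{seIntD4}. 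First one checks that $\Gal(f_{\tilde\ba}^{\D_4}/M)=\D_4$ as well: $f_{\tilde\ba}^{\D_4}(X)$ is irreducible with splitting field $L_\ba$, so its Galois group is a transitive subgroup of $\cS_4$ of order $8$, hence isomorphic to $\D_4$; in particular $\C_4\leq G_{\tilde\ba}$, so Lemma~\ref{lemD4rf} is applicable to the pair $(\tilde\ba,\ba')$ as well as to the pair $(\ba,\ba')$.

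For the first alternative I would apply the equivalence of (i) and (iii) in Lemma~\ref{lemD4rf} to $(\ba,\ba')$ verbatim, renaming the parameters $(y',w)$ as $(p,q)$; this is exactly $a'=ap^2-4bpq+abq^2$, $b'=b(p^2-apq+bq^2)^2$. For the second alternative I would apply the equivalence of (i) and (ii) in Lemma~\ref{lemD4rf} to $(\tilde\ba,\ba')$; substituting $(2a,a^2-4b)$ into (ii) produces $y,w\in M$ with
\[
a'=2a^3w^2+24abw^2-4a^2wy-16bwy+2ay^2,\qquad
b'=(a^2-4b)\bigl((a^2-4b)w^2-2awy+y^2\bigr)^2.
\]
The key point is the identity $(a^2-4b)w^2-2awy+y^2=(y-aw)^2-b(2w)^2$, so the substitution $p:=y-aw$, $q:=2w$ — a bijection of $M^2$, using char\,$k\neq2$ to recover $w=q/2$, $y=p+aq/2$ — turns the second factor into $p^2-bq^2$ and, after a short expansion, turns the first expression into $2(ap^2-4bpq+abq^2)$. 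This gives the second pair of formulas, and conversely each $(p,q)$ comes from an admissible $(y,w)$; hence the two displayed families describe precisely the two $M$-isomorphism classes appearing in Lemma~\ref{lemRF}.

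The one delicate point is the hypothesis $\C_4\leq G_{\ba'}$ required to invoke Lemma~\ref{lemD4rf}. In the ``only if'' direction it is automatic, since then $\Spl_M f_{\ba'}^{\D_4}(X)=L_\ba$ forces $\Gal(f_{\ba'}^{\D_4}/M)\cong\D_4$ as above. In the ``if'' direction I would avoid re-deriving it and instead argue directly: each family of formulas makes $f_{\ba'}^{\D_4}(X)$ equal to $\mathrm{Resultant}_Y(f_\ba^{\D_4}(Y),X-h(Y))$, respectively $\mathrm{Resultant}_Y(f_{\tilde\ba}^{\D_4}(Y),X-h(Y))$, for an \emph{odd} polynomial $h(Y)=yY+wY^3$; since the roots $\pm\alpha_1,\pm\alpha_2$ of $f_\ba^{\D_4}(X)$ (resp.\ of $f_{\tilde\ba}^{\D_4}(X)$) occur in sign-pairs and $h$ is odd, the values $h(\alpha_i)$ again occur in sign-pairs, and when $D_{\ba'}\neq0$ the four of them are distinct; a stabilizer count inside $\D_4=\Gal(L_\ba/M)$ then shows $[M(h(\alpha_1)):M]=4$ and $M(h(\alpha_1),h(\alpha_2))=L_\ba$, i.e.\ $\Spl_M f_{\ba'}^{\D_4}(X)=L_\ba=\Spl_M f_\ba^{\D_4}(X)$. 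I expect the main obstacle to be purely computational — carrying out the substitution of the previous paragraph and confirming the two resultant identities — together with this last orbit argument; with those in place the Proposition follows from Lemmas~\ref{lemRF} and~\ref{lemD4rf}.
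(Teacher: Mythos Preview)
Your approach matches the paper's exactly: reduce via Lemma~\ref{lemRF} to the two $M$-isomorphism alternatives, then apply Lemma~\ref{lemD4rf}(iii) to $(\ba,\ba')$ with $(p,q)=(y',w)$ and Lemma~\ref{lemD4rf}(ii) to $(\tilde\ba,\ba')=((2a,a^2-4b),\ba')$ with the substitution $(p,q)=(y-aw,2w)$. Your extra care about the hypothesis $\C_4\leq G_{\ba'}$ addresses a point the paper's proof leaves implicit; the orbit/stabilizer argument for the ``if'' direction is correct, though you could shorten it by invoking Lemma~\ref{lemM} (applied to $(\ba,\ba')$ or to $(\tilde\ba,\ba')$), which already yields $\Spl_M f_{\ba'}=\Spl_M f_\ba$ directly from the resultant identity once $D_\ba\cdot D_{\ba'}\neq 0$.
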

\begin{proof}
It follows from Lemma \ref{lemRF} that 
$\Spl_M f_{\ba}^{\D_4}(X)=\Spl_M f_{\ba'}^{\D_4}(X)$ if and only if 
either $M[X]/(f_{\ba'}^{\D_4}(X))\cong_M M[X]/(f_{\ba}^{\D_4}(X))$ or 
$M[X]/(f_{\ba'}^{\D_4}(X))\cong_M M[X]/(f_{2a,a^2-4b}^{\D_4}(X))$. 
The former case is obtained by putting $(p,q):=(y',w)$ in Lemma \ref{lemD4rf} (iii) 
and the latter case is obtained by putting $(a,b):=(2a,a^2-4b)$ and $(p,q):=(y-aw,2w)$ 
in Lemma \ref{lemD4rf} (ii). 
\end{proof}

Finally by using Lemma \ref{lemD4rf}, we get an answer to 
${\mathbf{Isom}^{\infty}}'(f_\bs^{\D_4}/M)$ over Hilbertian field $M$ as follows:
\begin{theorem}[An answer to ${\mathbf{Isom}^{\infty}}'(f_\bs^{\D_4}/M)$]\label{thD4Hil}
Let $M\supset k$ be a Hilbertian field. 
For $\ba=(a,b)\in M^2$ with $D_\ba\neq 0$, there exist infinitely many $\ba'=(a',b')\in M^2$ 
which satisfy the condition that $b'/b$ is not a fourth power in $M$ and 
$\Spl_M f_\ba^{\D_4}(X)=\Spl_M f_{\ba'}^{\D_4}(X)$. 
\end{theorem}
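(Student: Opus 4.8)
The plan is to produce the required $\ba'$ as a one--parameter family of specialisations coming from the parametrisation in Lemma \ref{lemD4rf}, and then to apply Hilbert's irreducibility theorem twice. Let $T$ be transcendental over $M$ and put
\begin{align*}
a'(T):=aT^2+(4b-2a^2)T+a^3-3ab,\qquad b'(T):=b(T^2-aT+b)^2\ \in M[T],
\end{align*}
which is the family of Lemma \ref{lemD4rf}(ii) with $w=1$, $y=T$. The resultant identity displayed in the proof of Lemma \ref{lemD4rf} is a polynomial identity over $M[T]$, so
\begin{align*}
R'(0,T,0,1,a,b;X)=f_{a'(T),b'(T)}^{\D_4}(X);
\end{align*}
hence the roots of $f_{a'(T),b'(T)}^{\D_4}(X)$ are exactly $\pm\gamma$ and $\pm\gamma'$, where $\gamma:=T\alpha+\alpha^3$, $\gamma':=T\beta+\beta^3$, and $\pm\alpha,\pm\beta$ are the roots of $f_\ba^{\D_4}(X)$ (so $\alpha^2+\beta^2=-a$, $\alpha^2\beta^2=b$). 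In particular all four roots lie in $L_\ba\cdot M(T)$, with no hypothesis on $G_\ba$.

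The first task is to check that $\Spl_{M(T)}f_{a'(T),b'(T)}^{\D_4}(X)$ equals all of $L_\ba\cdot M(T)$, and hence that $\Gal(f_{a'(T),b'(T)}^{\D_4}/M(T))\cong G_\ba$. Since $D_\ba=16b(a^2-4b)^2\neq 0$ we have $b\neq 0$ and $a^2-4b\neq 0$, so $\alpha,\beta$ are nonzero with $\alpha\neq\pm\beta$, and $\gamma=\alpha(T+\alpha^2)\neq 0$ because $T$ is transcendental over $M(\alpha)$. Let $G:=\Gal(L_\ba M(T)/M(T))\cong G_\ba\leq\D_4$ act on $\{\pm\alpha,\pm\beta\}$: an element sending $\alpha$ to $-\alpha$ sends $\gamma$ to $-\gamma\neq\gamma$; an element sending $\alpha$ to $\pm\beta$ sends $\gamma$ to $\pm(T\beta+\beta^3)$, which differs from $\gamma$ on comparing coefficients of $T$ (as $\alpha\neq\pm\beta$); and an element fixing $\alpha$ fixes $\gamma$. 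Thus $\mathrm{Stab}_G(\gamma)=\mathrm{Stab}_G(\alpha)$ for every subgroup $G$ of $\D_4$, whence $M(T)(\gamma)=M(T)(\alpha)$; by the same reasoning $M(T)(\gamma')=M(T)(\beta)$, so $\Spl_{M(T)}f_{a'(T),b'(T)}^{\D_4}(X)=M(T)(\gamma,\gamma')=M(T)(\alpha,\beta)=L_\ba\cdot M(T)$. (Separability of $f_{a'(T),b'(T)}^{\D_4}(X)$ over $M(T)$ holds since $b'(T)$ and $a'(T)^2-4b'(T)$ are nonzero in $M[T]$.)

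Next I would apply Hilbert's irreducibility theorem, using that $M$ is Hilbertian. By the Galois--specialisation form of the theorem, the set $\Omega_1$ of $t\in M$ with $D_{(a'(t),b'(t))}\neq 0$ and $[\Spl_M f_{a'(t),b'(t)}^{\D_4}(X):M]=|G_\ba|$ contains the complement of a thin subset of $M$, hence is infinite; for $t\in\Omega_1$ we get $\Spl_M f_{a'(t),b'(t)}^{\D_4}(X)\subseteq L_\ba$ with $[\Spl_M f_{a'(t),b'(t)}^{\D_4}(X):M]=|G_\ba|=[L_\ba:M]$, forcing $\Spl_M f_{a'(t),b'(t)}^{\D_4}(X)=L_\ba=\Spl_M f_\ba^{\D_4}(X)$. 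Since $a^2-4b\neq0$, the polynomial $T^2-aT+b$ is squarefree, hence not a square in $M(T)$, so $Y^2-(T^2-aT+b)$ is irreducible over $M(T)$; a second application of Hilbert's theorem yields an infinite set $\Omega_2\subseteq M$ with $t^2-at+b$ a non-square in $M$ for all $t\in\Omega_2$, and then $b'(t)/b=(t^2-at+b)^2$ is a square but not a fourth power in $M$. The intersection $\Omega_1\cap\Omega_2$ is again a thin complement, hence infinite, and the map $t\mapsto(a'(t),b'(t))$ has finite fibres (its second coordinate $b'(T)=b(T^2-aT+b)^2$ is non-constant), so $\{(a'(t),b'(t)):t\in\Omega_1\cap\Omega_2\}$ is an infinite family of pairs $\ba'=(a',b')\in M^2$ with $b'/b$ not a fourth power in $M$ and $\Spl_M f_{\ba'}^{\D_4}(X)=\Spl_M f_\ba^{\D_4}(X)$, as required.

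The step I expect to be the main obstacle is the verification in the second paragraph that the Tschirnhausen substitution $\alpha\mapsto T\alpha+\alpha^3$ does not collapse the splitting field over $M(T)$. The argument above does this uniformly, so there is in particular no need to treat $G_\ba=\V_4$ separately (where Lemma \ref{lemTTD4}, and hence the full equivalence in Lemma \ref{lemD4rf}, can fail); the only inputs are the resultant identity $R'(0,T,0,1,a,b;X)=f_{a'(T),b'(T)}^{\D_4}(X)$ and the elementary stabiliser computation, both valid for arbitrary $G_\ba\leq\D_4$.
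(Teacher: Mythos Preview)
Your argument is correct. It uses the same one--parameter Tschirnhausen family as the paper (Lemma~\ref{lemD4rf}(ii)/(iv) with $w=1$), and the same application of Hilbert's theorem to $Y^2-(T^2-aT+b)$ to force $b'/b$ not to be a fourth power. The difference lies in how you obtain the splitting--field equality $\Spl_M f_{\ba'}^{\D_4}(X)=L_\ba$ for the specialisations.

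The paper does this in one line: since $f_{\ba'}^{\D_4}(X)$ is by construction a Tschirnhausen transformation of $f_{\ba}^{\D_4}(X)$ over $M$, Lemma~\ref{lemM} (invertibility of Tschirnhausen transformations when both discriminants are nonzero) gives $\Spl_M f_{\ba}^{\D_4}(X)=\Spl_M f_{\ba'}^{\D_4}(X)$ for \emph{every} $u\in M$ with $D_{\ba'}\neq 0$, i.e.\ for all but finitely many $u$. Hilbert is then invoked only once. You instead pass to $M(T)$, verify $\Spl_{M(T)}f_{a'(T),b'(T)}^{\D_4}(X)=L_\ba\cdot M(T)$ by your stabiliser computation, and then use a second instance of Hilbert to preserve the Galois group under specialisation. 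This works, but the second Hilbert step is not needed: your own stabiliser argument already shows $M(t)(\gamma,\gamma')=M(t)(\alpha,\beta)=L_\ba$ for every $t\in M$ with $\gamma\neq 0$ and $\gamma\neq\pm\gamma'$, which excludes only finitely many $t$. (Equivalently, Lemma~\ref{lemM} applies directly.)

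Your remark about the $\V_4$ case is well taken: the paper cites Lemma~\ref{lemD4rf}(iv), whose stated hypothesis is $\C_4\le G_\ba,G_{\ba'}$, so the citation is imprecise when $G_\ba=\V_4$ or $f_\ba^{\D_4}$ is reducible. However, the direction actually used---given $(y,w)\in M^2$, the polynomial $R'(0,y,0,w,a,b;X)=f_{\ba'}^{\D_4}(X)$ has the same splitting field as $f_\ba^{\D_4}(X)$---needs only the resultant identity together with Lemma~\ref{lemM}, neither of which requires $\C_4\le G_\ba$. So the paper's argument is sound in content, and your more explicit treatment confirms this uniformly.
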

\begin{proof}
By Lemma \ref{lemD4rf} (iv), for an arbitrary $u\in M$, $f_\ba^{\D_4}(X)$ and 
$f_{\ba'}^{\D_4}(X)$ with 
\[
a'=(a^3-3ab-2a^2u+4bu+au^2),\quad b'=b(b-au+u^2)^2
\]
have the same splitting field over $M$. 
By Hilbert's irreducibility theorem, there exist infinitely many $u\in M$ such that 
\[
X^2-(b-au+u^2)=X^2-(u^2-a/2)^2+(a^2-4b)/4
\]
is irreducible over $M$ because $a^2-4b\neq 0$. 
For such infinitely many $u\in M$, $b'/b=(b-au+u^2)^2$ is not a fourth power in $M$. 
\end{proof}

\section{The cases of $\C_4$ and of $\V_4$}\label{seC4V4}

We take $k$-generic polynomials 
\begin{align*}
f_{s,u}^{\C_4}(X)&:=X^4+sX^2+\frac{s^2}{u^2+4}\in k(s,u)[X],\\
f_{s,v}^{\V_4}(X)&:=X^4+sX^2+v^2\in k(s,v)[X]
\end{align*}
for $\C_4$ and for $\V_4$ respectively (cf. Lemma \ref{lemgenC4V4}). 
The discriminant of $f_{s,u}^{\C_4}(X)$ (resp. $f_{s,v}^{\V_4}(X)$) with respect to $X$ 
is given by $16\, s^6u^4/(u^2+4)^3$ (resp. $16\,v^2(s+2v)^2(s-2v)^2$). 
We always assume that for $(a,c)\in M^2$ (resp. for $(a,d)\in M^2$), 
$f_{a,c}^{\C_4}(X)$ (resp. $f_{a,d}^{\V_4}(X)$) is well-defined and separable over $M$, 
i.e. $ac(a^2+4)\neq 0$ (resp. $d(a+2d)(a-2d)\neq 0$). 

As in (\ref{eqR1}) of Section \ref{seD4}, we take $\D_4\times \D_4'$-primitive 
$\D_4''$-invariant $P:=x_1y_1+x_2y_2+x_3y_3+x_4y_4$ and $\D_4\times \D_4'$-relative 
$\D_4''$-invariant resolvent polynomial by $P$: 
\begin{align*}
\R_{\bs,\bs'}^{1}(2X)/2^8\!
&=\RP_{P,\D_4\times \D_4',f_{\bs,\bs'}^{\D_4}}(2X)/2^8\nonumber\\
&=X^8-2s{s'}X^6+(s^2{s'}^2+2t{s'}^2+2s^2{t'}-16t{t'})X^4\nonumber\\
&\quad\ -2s{s'}(t{s'}^2+s^2{t'}-8t{t'})X^2+(t{s'}^2-s^2{t'})^2. 
\end{align*}

By specializing parameters $(\bs,\bs')=(s,t,s',t')\mapsto (s,s^2/(u^2+4),s',{s'}^2/({u'}^2+4))$ 
of $\R_{\bs,\bs'}^{1}(2X)/2^8$, we have the following decomposition: 
\begin{align*}
\omega_{f}(\R_{\bs,\bs'}^{1}(2X)/2^8)
=\Bigl(X^4-ss'X^2+\frac{s^2s'^2(u+u')^2}{(u^2+4)(u'^2+4)}\Bigr)
\Bigl(X^4-ss'X^2+\frac{s^2s'^2(u-u')^2}{(u^2+4)(u'^2+4)}\Bigr)
\end{align*}
where $f=f_{s,u}^{\C_4} f_{s',u'}^{\C_4}$. 
By Theorem \ref{thD4}, we obtain an answer to $\mathbf{Isom}(f_{s,u}^{\C_4}/M)$. 

\begin{theorem}[An answer to $\mathbf{Isom}(f_{s,u}^{\C_4}/M)$]\label{thC4}
For $\ba=(a,c)$, $\ba'=(a',c')\in M^2$ with $aa'cc'(c^2+4)({c'}^2+4)\neq 0$, 
we assume that $c\neq\pm c'$ and $c\neq \pm 4/c'$. 
Then the splitting fields of $f_{a,c}^{\C_4}(X)$ and of $f_{a',c'}^{\C_4}(X)$ 
over $M$ coincide if and only if either $f_{A,C^+}^{\C_4}(X)$ or $f_{A,C^-}^{\C_4}(X)$ has 
a linear factor over $M$ where 
\begin{align*}
f_{A,C^\pm}^{\C_4}(X)=X^4-aa'X^2+\frac{a^2a'^2(c\pm c')^2}{(c^2+4)(c'^2+4)}
\quad\mathrm{with}\quad A=-aa',\ C^\pm=\frac{cc'\mp 4}{c\pm c'}. 
\end{align*}
\end{theorem}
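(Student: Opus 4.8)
The plan is to deduce the theorem from the $\D_4$ analysis of Section~\ref{seD4}, viewing $f_{s,u}^{\C_4}(X)$ as the specialization $f_{s,\,s^2/(u^2+4)}^{\D_4}(X)$ and exploiting the factorization of $\omega_f(\R_{\bs,\bs'}^1(2X)/2^8)$ displayed just above. First I would record the elementary identity $(C^\pm)^2+4=(c^2+4)({c'}^2+4)/(c\pm c')^2$; together with $A=-aa'$ this identifies the two quartic factors in that display, after the further specialization $(s,u,s',u')\mapsto(a,c,a',c')$, with $f_{A,C^+}^{\C_4}(X)$ and $f_{A,C^-}^{\C_4}(X)$, so that $\R_{\ba,\ba'}^1(2X)=2^8\,f_{A,C^+}^{\C_4}(X)\,f_{A,C^-}^{\C_4}(X)$ (the specialization being legitimate since $c^2+4\neq0$ and ${c'}^2+4\neq0$). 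Since $\mathrm{char}\,M\neq2$, the substitution $r\mapsto r/2$ is a bijection of $M$, whence $\R_{\ba,\ba'}^1(X)$ has a linear factor over $M$ if and only if at least one of $f_{A,C^+}^{\C_4}(X)$, $f_{A,C^-}^{\C_4}(X)$ does.

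Next I would check separability. The discriminant of $f_{A,C}^{\C_4}(X)$ is $16A^6C^4/(C^2+4)^3$, which is nonzero here because $A=-aa'\neq0$, while $C^+\neq0$ amounts to $cc'\neq4$ and $C^-\neq0$ to $cc'\neq-4$, i.e. $c\neq\pm4/c'$; also $c\neq\pm c'$ makes $C^\pm$ well-defined. Hence each $f_{A,C^\pm}^{\C_4}(X)$ is separable, and they are coprime, since a common root would force $(C^+)^2=(C^-)^2$, which a short manipulation turns into $c({c'}^2+4)=0$ or $c'(c^2+4)=0$, both excluded. Therefore $\R_{\ba,\ba'}^1(X)$ has no repeated factors, so for it ``has a linear factor'' and ``has a non-repeated linear factor'' coincide.

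Finally I would run the resolvent machinery with $G=\D_4\times\D_4'$, $H=\D_4''$ and $\Theta=P$ exactly as in Section~\ref{seD4} (recall $\R_{\bs,\bs'}^1=\RP_{P,\D_4\times\D_4',f_{\bs,\bs'}^{\D_4}}$ by (\ref{eqR1}), with $P$ a $\D_4\times\D_4'$-primitive $\D_4''$-invariant and $G_{\ba,\ba'}\leq\D_4\times\D_4'$): by the Corollary to Theorem~\ref{thfun}, $\R_{\ba,\ba'}^1(X)$ has a non-repeated linear factor over $M$ iff $G_{\ba,\ba'}\leq\pi\D_4''\pi^{-1}$ for some $\pi$, and by the $\D_4$-analogue of Lemma~\ref{lemMM} this is equivalent to $M[X]/(f_\ba^{\C_4}(X))\cong_M M[X]/(f_{\ba'}^{\C_4}(X))$ (compare Theorem~\ref{thD4}, rows (III-1)--(III-3)). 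For the family $f_{a,c}^{\C_4}(X)$, $c\in M$, this $M$-algebra isomorphism is in turn equivalent to $\Spl_M f_\ba^{\C_4}(X)=\Spl_M f_{\ba'}^{\C_4}(X)$: if $f_{a,c}^{\C_4}(X)$ is irreducible over $M$ then $\sqrt{(a^2-4b)/b}=|c|\in M$ with $b=a^2/(c^2+4)$, so $\Gal(f_\ba^{\C_4}/M)=\C_4$ by Lemma~\ref{lem2}(ii) and Corollary~\ref{cor1} applies since the unique index-$4$ subgroup of $\C_4$ is trivial; the remaining reducible specializations, where the constraint that $(a^2-4b)/b$ be a square in $M$ rigidifies the algebra, are handled by the explicit splitting-field computations of Section~\ref{seRed}. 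Concatenating these equivalences with the first paragraph gives the theorem. I expect this last step --- confirming that $M$-isomorphism of the quotient algebras really does coincide with equality of splitting fields across both the cyclic and the reducible sub-cases --- to be the only point requiring genuine care; the rest is the bookkeeping of Section~\ref{seD4} together with the short identities above.
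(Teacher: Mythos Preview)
Your approach is essentially the paper's: specialize $\R^1_{\bs,\bs'}$ along $t=s^2/(u^2+4)$, $t'={s'}^2/({u'}^2+4)$, identify the two degree-$4$ factors with $f_{A,C^\pm}^{\C_4}$, and read off the conclusion from Theorem~\ref{thD4} (Table~$4$, rows (III-1)--(III-3)). The paper compresses all of this into a single sentence (``By Theorem~\ref{thD4}, we obtain\ldots''), so your explicit verification of the identity $(C^\pm)^2+4=(c^2+4)({c'}^2+4)/(c\pm c')^2$ and the separability/coprimality check are useful fillings-in of what the paper leaves implicit.

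One correction: your appeal to Section~\ref{seRed} for the reducible sub-case is misplaced. That section treats reducible specializations of $f^{\cS_4}_{s,t}$ with $G_\ba=\cS_3$ or $\C_3$; it says nothing about reducible specializations of $f^{\C_4}_{s,u}$ (or of $f^{\D_4}_{s,t}$). In fact the paper's own derivation via Theorem~\ref{thD4} already presupposes that both quartics are irreducible over $M$ --- this is stated at the start of \S\ref{seIntD4} --- so the reducible sub-case is not covered there either. The theorem should be read under this tacit irreducibility hypothesis (equivalently $G_\ba=G_{\ba'}=\C_4$, which Lemma~\ref{lem2}(ii) guarantees once $f^{\C_4}_\ba$, $f^{\C_4}_{\ba'}$ are irreducible), and then your argument via Corollary~\ref{cor1} goes through cleanly without the final hedging.
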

\begin{remark}
The discriminant of $f_{A,C^\pm}^{\C_4}(X)$ with respect to $X$ equals
\[
\frac{16\,a^6a'^6(c\pm c')^2(cc'\mp 4)}{(c^2+4)^3(c'^2+4)^3}.
\]
We may assume that $c\neq \pm c'$ and $c\neq\pm 4/c'$ without loss of generality 
as we have explained it in Remark \ref{remGir}. 
\end{remark}
\begin{example}
For $f_{a,c}^{\C_4}(X)=X^4+aX^2+a^2/(c^2+4)$, we first note that 
\[
\Spl_M f_{a,c}^{\C_4}(X)=\Spl_M f_{a,-c}^{\C_4}(X)\quad\mathrm{and}\quad  
\Spl_M f_{a,c}^{\C_4}(X)=\Spl_M f_{ae^2,c}^{\C_4}(X)\quad \mathrm{for}\quad a,c,e\in M.
\]
By Theorem \ref{thC4}, we have 
\begin{align}
\Spl_M f_{a,c}^{\C_4}(X)=\Spl_M f_{(c^2+4)/a,c}^{\C_4}(X)\quad\mathrm{for}\quad 
f_{(c^2+4)/a,c}^{\C_4}(X)=X^4+\frac{c^2+4}{a}X^2+\frac{c^2+4}{a^2}\label{eqC41}
\end{align}
because we have $f_{A,C^+}^{\C_4}(X)=(X-2)(X+2)(X-c)(X+c)$ for $(a',c')=((c^2+4)/a,c)$. 
Although Theorem \ref{thC4} is not applicable to the case of $c'=4/c$, it follows from 
$\Spl_M(X^4+aX^2+b)=\Spl_M(X^4+2aX^2+a^2-4b)$ that 
\begin{align}
\Spl_M f_{a,c}^{\C_4}(X)=\Spl_M  f_{2a,4/c}^{\C_4}(X)\quad\mathrm{for}\quad 
f_{2a,4/c}^{\C_4}(X)=X^4+2aX^2+\frac{a^2c^2}{c^2+4}\label{eqC42}.
\end{align}
By (\ref{eqC41}) and (\ref{eqC42}), we also see the polynomials 
\[
f_{a,c}^{\C_4}(X)\quad\mathrm{and}\quad 
f_{2(c^2+4)/a,4/c}^{\C_4}(X)=X^4+\frac{2(c^2+4)}{a}X^2+\frac{c^2(c^2+4)}{a^2}
\]
have the same splitting field over $M$. 
\end{example}
\begin{example}
We take $M=\mathbb{Q}$ and the simplest quartic polynomial
\[
h_n(X)=X^4-nX^3-6X^2+nX+1\in \mathbb{Q}[X],\quad (n\in\mathbb{Z})
\]
with discriminant $4(n^2+16)^3$ whose Galois group over $\mathbb{Q}$ is isomorphic to 
$\C_4$ except for $n=0,\pm 3$ 
(cf. for example, \cite{Gra77}, \cite{Gra87}, \cite{Laz91}, \cite{LP95}, \cite{Kim04}, 
\cite{HH05}, \cite{Duq07}, \cite{Lou07}, and the references therein). 
By Lemma \ref{lemTT}, we see that $h_n(X)$ and 
\[
H_n(X):=f_{-(n^2+16),n/2}^{\C_4}(X)=X^4-(n^2+16)X^2+4(n^2+16)
\]
have the same splitting field over $\mathbb{Q}$. 
For $n\in\mathbb{Z}$, we may assume $1\leq n$ because $\Spl_\mathbb{Q} H_n(X)
=\Spl_\mathbb{Q} H_{-n}(X)$ and $H_n(X)$ splits over $\mathbb{Q}$ only for $n=0$, $\pm 3$. 
For $1\leq m<n$, we apply Theorem \ref{thC4} to $H_m(X)$, $H_n(X)$ with 
$(a,c,a',c')=(-(m^2+16),m/2,-(n^2+16),n/2)$, then we see 
\begin{align*}
f_{A,C^+}^{\C_4}(X)&=(X-60)(X+60)(X-80)(X+80)\quad\mathrm{for}\quad (m,n)=(2,22),\\
f_{A,C^-}^{\C_4}(X)&=(X-255)(X+255)(X-340)(X+340)\quad\mathrm{for}\quad (m,n)=(1,103),\\
f_{A,C^+}^{\C_4}(X)&=(X-2080)(X+2080)(X-4992)(X+4992)\quad\mathrm{for}\quad (m,n)=(4,956).
\end{align*}
Hence we get 
\[
\Spl_M h_m(X)=\Spl_M h_n(X)\quad \mathrm{for}\quad (m,n)\in \{(1,103),(2,22),(4,956)\}. 
\]
For just two cases $(m,n)=(1,16)$, $(2,8)$, Theorem \ref{thC4} was not applicable. 
However it works for a suitable Tschirnhausen transformation of $H_n(X)$ as in 
Remark \ref{remGir}. 
Indeed we may use (\ref{eqC42}) in the previous example. 

We checked by Theorem \ref{thC4} that for integers $m,n$ in the range $1\leq m<n\leq 10^5$, 
$f_{A,C^\pm}^{\C_4}(X)$ has a linear factor over $\mathbb{Q}$, i.e. 
$\Spl_M h_m(X)=\Spl_M h_n(X)$, only for the values of $(m,n)=(1,103),(2,22),(4,956)$.
\end{example}
\begin{remark}
In the case where the field $M$ includes a primitive $4$th root $i:=e^{2\pi \sqrt{-1}/4}$ 
of unity, the polynomial $g_t^{\C_4}(X):=X^4-t\in k(t)[X]$ is $k$-generic 
for $\C_4$ by Kummer theory. 
Indeed we see that the polynomials $f_{a,c}^{\C_4}(X)=X^4+aX^2+a^2/(c^2+4)$ and 
\[
g_{a^2(c-2i)/(c+2i)}^{\C_4}(X)=X^4-\frac{a^2(c-2i)}{c+2i}
\]
are Tschirnhausen equivalent over $M$ because 
\[
\mathrm{Resultant}_X \Bigl(f_{a,c}^{\C_4}(X),
Y-\Bigl(\frac{(c+i)(c-2i)}{c}X+\frac{c^2+4}{ac}X^3\Bigr)\Bigr)=g_{a^2(c-2i)/(c+2i)}^{\C_4}(Y)
\]
(we may assume that $ac\neq 0$ since $f_{0,c}^{\C_4}(X)=X^4$ and $f_{a,0}^{\C_4}=(X^2+a/2)^2$). 
In this case, for $b,b'\in M$ with $b\cdot b'\neq 0$, 
the splitting fields of $g_b^{\C_4}(X)$ and of $g_{b'}^{\C_4}(X)$ over $M$ coincide 
if and only if the polynomial $(X^4-bb')(X^4-b^3b')$ has a linear factor over $M$. 
\end{remark}

Finally let us check the field isomorphism problem ${\bf Isom}(f_{s,v}^{\V_4}/M)$. 
By specializing parameters $(\bs,\bs')=(s,t,s',t')\mapsto (s,v^2,s',v'^2)$ of 
$\R_{\bs,\bs'}^{1}(X)$, we have
\begin{align*}
\R_{s,v,s',v'}^{\V_4}(X)&:=\omega_{f}(\R_{\bs,\bs'}^{1}(2X)/2^8)\\
&=\bigl(X^4-(ss'+4vv')X^2+(sv'+s'v)^2\bigr)\\
&\ \cdot\bigl(X^4-(ss'-4vv')X^2+(sv'-s'v)^2\bigr)
\end{align*}
where $f=f_{s,v}^{\V_4} f_{s',v'}^{\V_4}$. 
As in the case of $f_{s,u}^{\C_4}(X)$, if $\R_{a,d,a',d'}^{\V_4}(X)$ has a (simple) linear 
factor over $M$ for $a,d,a',d'\in M$ then $\Spl_M f_{a,d}^{\V_4}(X)=\Spl_M f_{a',d'}^{\V_4}(X)$. 
However the converse dose not hold by group theoretical reason (see Table $4$ of 
Theorem \ref{thD4}). 

Although we could not get an answer to ${\bf Isom}(f_{s,v}^{\V_4}/M)$ 
via $\R_{s,v,s',v'}^{\V_4}(X)$, the answer can be 
obtained easily by comparing quadratic subfields (see Remark \ref{rem6}). 

\section{Reducible cases}\label{seRed}

In this section, we treat reducible cases. 
We take the $k$-generic polynomial 
\[
f_{s,t}^{\cS_4}(X)=X^4+sX^2+tX+t\in k(s,t)[X]
\]
for $\cS_4$ with discriminant 
\[
D_{s,t}:=t(16s^4 - 128s^2t - 4s^3t + 256t^2 + 144st^2 - 27t^3)
\]
and the $\cS_4\times \cS_4'$-relative $\cS_4''$-invariant resolvent polynomial $\R_{\bs,\bs'}(X)$ 
of the product $f_{\bs,\bs'}^{\cS_4}(X):=f_{s,t}^{\cS_4}(X)$ $\cdot$ $f_{s',t'}^{\cS_4}(X)$ by 
$P:=x_1y_1+x_2y_2+x_3y_3+x_4y_4$ as in (\ref{polyR}) of Section \ref{seS4A4}. 

For $\ba=(a,b)$, $\ba'=(a',b')\in M^2$ with $D_\ba\cdot D_{\ba'}\neq 0$, we put 
\[
L_\ba:=\Spl_M f_\ba^{\cS_4}(X),\quad G_\ba:=\Gal(f_\ba^{\cS_4}/M),
\quad G_{\ba,\ba'}:=\Gal(f_{\ba,\ba'}^{\cS_4}/M).
\]

We assume that $G_\ba=\cS_3$ or $\C_3$ 
and omit the cases where $\#G_\ba\leq 2$ or $\#G_{\ba'}\leq 2$.

\begin{theorem}\label{thred}
For $\ba=(a,b)$, $\ba'=(a',b')\in M^2$ with $D_\ba\cdot D_{\ba'}\neq 0$, assume that 
$G_{\ba}=\cS_3$ or $\C_3$, and $\#G_{\ba'}\geq 3$. 
If $G_{\ba}=\cS_3$ $($resp. $G_{\ba}=\C_3$$)$ then an answer to the 
intersection problem of $f_{s,t}^{\cS_4}(X)$ is given 
by ${\rm DT}(\R_{\ba,\ba'})$ as Table $5$ $($resp. Table $6$$)$ shows. 
\end{theorem}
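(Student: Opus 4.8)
The plan is to rerun the argument behind Theorem~\ref{thS4A4}, this time tracking the reducibility of $f_\ba^{\cS_4}(X)$. First I would pin down the shape of $f_\ba^{\cS_4}(X)$: if $G_\ba=\Gal(f_\ba^{\cS_4}/M)$ is $\cS_3$ or $\C_3$ then, regarded inside $\cS_4$, it cannot act transitively on the four roots (its order is prime to $4$); a faithful action of $\cS_3$ or $\C_3$ on at most four letters fixes exactly one of them and is the natural action on the remaining three, so $f_\ba^{\cS_4}(X)=(X-\alpha)\,g_\ba(X)$ with $\alpha\in M$ and $g_\ba(X)\in M[X]$ an irreducible cubic satisfying $\Gal(g_\ba/M)\cong G_\ba$. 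Hence $L_\ba=\Spl_M f_\ba^{\cS_4}(X)=\Spl_M g_\ba(X)$ is the splitting field of a cubic (cyclic when $G_\ba=\C_3$, non-Galois when $G_\ba=\cS_3$), $M[X]/(f_\ba^{\cS_4}(X))\cong M\times M[X]/(g_\ba(X))$, and, since all index-$3$ subgroups of $\cS_3$ (resp.\ of $\C_3$) are conjugate there, an $M$-isomorphism of the algebras $M[X]/(f_\ba^{\cS_4}(X))$ and $M[X]/(f_{\ba'}^{\cS_4}(X))$ is equivalent to $L_\ba=L_{\ba'}$ in the cases at hand (cf.\ Corollary~\ref{cor1}); note also that the unique quadratic subfield of $L_\ba$ when $G_\ba=\cS_3$ is $M(\sqrt{D_\ba})$, since $D_\ba=g_\ba(\alpha)^2\,\mathrm{disc}(g_\ba)$.

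Next I would enumerate the admissible groups $G_{\ba,\ba'}=\Gal(f_{\ba,\ba'}^{\cS_4}/M)$. This group is a subdirect product of $G_\ba$ and $G_{\ba'}$ inside $\cS_4\times\cS_4'$ (it projects onto each factor), so by Goursat's lemma its conjugacy class corresponds to normal subgroups $N\trianglelefteq G_\ba$, $N'\trianglelefteq G_{\ba'}$ together with an isomorphism $G_\ba/N\cong G_{\ba'}/N'$; Galois theory identifies this common quotient with $\Gal(L_\ba\cap L_{\ba'}/M)$, so $[L_\ba\cap L_{\ba'}:M]=[G_\ba:N]\in\{1,2,6\}$ if $G_\ba=\cS_3$ and $\in\{1,3\}$ if $G_\ba=\C_3$. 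Running over the (short) lists of normal subgroups of $\cS_3$ and of $\C_3$ on one side, and of each admissible $G_{\ba'}$ with $\#G_{\ba'}\ge 3$ on the other — that is $G_{\ba'}\in\{\cS_4,\A_4,\D_4,\C_4,\V_4,\cS_3,\C_3\}$, of which only those admitting a nontrivial quotient isomorphic to a quotient of $G_\ba$ can yield $L_\ba\cap L_{\ba'}\neq M$ — produces exactly the finite list of pairs $(G_{\ba,\ba'},\,L_\ba\cap L_{\ba'})$ in the first columns of Table~$5$ ($G_\ba=\cS_3$) and Table~$6$ ($G_\ba=\C_3$).

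The decomposition types are then read off from Theorem~\ref{thfun}, applied with $m=8$, $G=\cS_4\times\cS_4'\le\cS_8$, $H=\cS_4''$, $f=f_{\ba,\ba'}^{\cS_4}$ and $\Theta=P$: $\mathrm{DT}(\R_{\ba,\ba'})$ over $M$ is the partition of $24=[\cS_4\times\cS_4':\cS_4'']$ given by the orbit lengths of $G_{\ba,\ba'}$ acting by left multiplication on $(\cS_4\times\cS_4')/\cS_4''$, which for each group from the previous step is a routine finite computation (done with GAP's \texttt{DoubleCosetRepsAndSizes}, as for Tables~$1$, $3$ and $4$) and fills the last column. By Proposition~\ref{prop12} and the discussion after the definition of $\mathrm{DT}$, the same partition also records the degrees $[M(c_{0,\pi},\dots,c_{3,\pi}):M]$ of the fields of Tschirnhausen coefficients from $f_\ba^{\cS_4}$ to $f_{\ba'}^{\cS_4}$; in particular $\mathrm{DT}(\R_{\ba,\ba'})$ contains $1$ if and only if the $M$-algebras $M[X]/(f_\ba^{\cS_4}(X))$ and $M[X]/(f_{\ba'}^{\cS_4}(X))$ are $M$-isomorphic (Theorem~\ref{throotf}), which by the first paragraph forces $G_{\ba'}\in\{\cS_3,\C_3\}$ and $L_\ba=L_{\ba'}$. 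Finally I would check that $\mathrm{DT}(\R_{\ba,\ba'})$ pins down the row of the table; whenever two rows share the same decomposition type they are separated by comparing the small subfields of $L_\ba$ and $L_{\ba'}$ — the quadratic subfield $M(\sqrt{D_\ba})$ of $L_\ba$ and the subfields of $L_{\ba'}$ listed earlier for each value of $G_{\ba'}$ — exactly as in Remark~\ref{rem6} and in the remark following Theorem~\ref{thS4A4}.

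The main obstacle is this last step: just as in the irreducible cases, $\mathrm{DT}(\R_{\ba,\ba'})$ alone need not distinguish all intersection patterns, so one must verify case by case that the auxiliary comparison of quadratic (resp.\ cubic) subfields resolves the ambiguity. The remaining work — the Goursat enumeration of the subdirect products and the orbit-length computations — is lengthy but mechanical.
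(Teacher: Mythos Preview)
Your proposal is correct and follows essentially the same approach as the paper: the paper does not give an independent proof of Theorem~\ref{thred} but derives it, exactly as for Theorems~\ref{thS4A4} and \ref{thD4}, by applying Theorem~\ref{thfun} to the resolvent $\R_{\ba,\ba'}(X)$ and computing the orbit lengths for each admissible $G_{\ba,\ba'}\le\cS_4\times\cS_4'$ with GAP's \texttt{DoubleCosetRepsAndSizes}. Your added structure---the explicit factorization $f_\ba^{\cS_4}=(X-\alpha)g_\ba(X)$, the Goursat enumeration of subdirect products, and the explicit separation of ambiguous rows via quadratic subfields---simply makes precise what the paper leaves implicit.
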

\begin{center}
{\rm Table} $5$\vspace*{3mm}\\
{\small 
\begin{tabular}{|c|c|l|l|c|l|l|}\hline
$G_\ba$& $G_{\ba'}$ & & GAP ID & $G_{\ba,{\ba'}}$ & & ${\rm DT}(\R_{\ba,\ba'})$
\\ \hline 
& & (IV-1) & $[144,183]$ & $\cS_3\times \cS_4$ & $L_\ba\cap L_{\ba'}=M$ & $24$\\ \cline{3-7} 
& $\cS_4$ & (IV-2) & $[72,43]$ & $(\C_3\times \A_4)\rtimes \C_2$ 
& $[L_\ba\cap L_{\ba'}:M]=2$ & $12,12$\\ \cline{3-7} 
& & (IV-3) & $[24,12]$ & $\cS_4$ & $L_\ba\subset L_{\ba'}$ & $12,8,4$\\ \cline{2-7}
& $\A_4$ & (IV-4) & $[72,44]$ & $\cS_3\times \A_4$ & $L_\ba\cap L_{\ba'}=M$ & $24$\\ \cline{2-7}
& & (IV-5) & $[48,38]$ & $\cS_3\times \D_4$ & $L_\ba\cap L_{\ba'}=M$ & $24$\\ \cline{3-7} 
& \raisebox{-1.6ex}[0cm][0cm]{$\D_4$} 
& (IV-6) & $[24,6]$ & $\D_{12}$ & $[L_\ba\cap L_{\ba'}:M]=2$ & $12,12$\\ \cline{3-7} 
\raisebox{-1.6ex}[0cm][0cm]{$\cS_3$}
& & (IV-7) & $[24,8]$ & $(\C_3\times \V_4)\rtimes \C_2$ & $[L_\ba\cap L_{\ba'}:M]=2$ 
& $24$\\ \cline{3-7} 
& & (IV-8) & $[24,8]$ & $(\C_3\times \V_4)\rtimes \C_2$ & $[L_\ba\cap L_{\ba'}:M]=2$ 
& $12,12$\\ \cline{2-7}
& \raisebox{-1.6ex}[0cm][0cm]{$\C_4$} & (IV-9) & $[24,5]$ 
& $\cS_3\times \C_4$ & $L_\ba\cap L_{\ba'}=M$ & $24$\\ \cline{3-7}
& & (IV-10) & $[12,1]$ & $\C_3\rtimes \C_4$ & $[L_\ba\cap L_{\ba'}:M]=2$ & $12,12$\\ \cline{2-7}
& & (IV-11) & $[24,14]$ & $\cS_3\times \V_4$ & $L_\ba\cap L_{\ba'}=M$ & $24$\\ \cline{3-7} 
& & (IV-12) & $[24,14]$ & $\cS_3\times \V_4$ & $L_\ba\cap L_{\ba'}=M$ & $12,12$\\ \cline{3-7} 
& $\V_4$ & (IV-13) & $[12,4]$ & $\D_6$ & $[L_\ba\cap L_{\ba'}:M]=2$ & $12,12$\\ \cline{3-7} 
& & (IV-14) & $[12,4]$ & $\D_6$ & $[L_\ba\cap L_{\ba'}:M]=2$ & $12,6,6$\\ \cline{3-7} 
& & (IV-15) & $[12,4]$ & $\D_6$ & $[L_\ba\cap L_{\ba'}:M]=2$ & $6,6,6,6$\\ \hline\hline
& & (IV-16) & $[36,10]$ & $\cS_3\times \cS_3$ & $L_\ba\cap L_{\ba'}=M$ & $18,6$\\ \cline{3-7}  
\raisebox{-1.6ex}[0cm][0cm]{$\cS_3$} & $\cS_3$ & (IV-17) & $[18,4]$ & $(\C_3\times \C_3)\rtimes 
\C_2$ & $[L_\ba\cap L_{\ba'}:M]=2$ 
& $9,9,3,3$\\ \cline{3-7} 
& & (IV-18) & $[6,1]$ & $\cS_3$ & $L_\ba=L_{\ba'}$ & $6^2,3^3,2,1$\\ \cline{2-7}
& $\C_3$ & (IV-19) & $[18,3]$ & $\cS_3\times \C_3$ & $L_\ba\cap L_{\ba'}=M$ & $18,6$\\ \cline{1-7} 
\end{tabular}
}\vspace*{5mm}
\end{center}
\begin{center}
{\rm Table} $6$\vspace*{3mm}\\
{\small 
\begin{tabular}{|c|c|l|l|c|l|l|}\hline
$G_\ba$& $G_{\ba'}$ & & GAP ID & $G_{\ba,{\ba'}}$ & & ${\rm DT}(\R_{\ba,\ba'})$
\\ \hline 
& $\cS_4$ & (V-1) & $[72,42]$ & $\C_3\times \cS_4$ & $L_\ba\cap L_{\ba'}=M$ 
& $24$\\ \cline{2-7}
& \raisebox{-1.6ex}[0cm][0cm]{$\A_4$} & (V-2) & $[36,11]$ & $\C_3\times \A_4$ 
& $L_\ba\cap L_{\ba'}=M$ & $12,12$\\ \cline{3-7} 
\raisebox{-1.6ex}[0cm][0cm]{$\C_3$} 
& & (V-3) & $[12,3]$ & $\A_4$ & $L_\ba\subset L_{\ba'}$ & $12,4,4,4$\\ \cline{2-7}
& $\D_4$ & (V-4) & $[24,10]$ & $\C_3\times \D_4$ & $L_\ba\cap L_{\ba'}=M$ 
& $24$\\ \cline{2-7}
& $\C_4$ & (V-5) & $[12,2]$ & $\C_{12}$ & $L_\ba\cap L_{\ba'}=M$ 
& $12,12$\\ \cline{2-7}
& $\V_4$ & (V-6) & $[12,5]$ & $\C_3\times \V_4$ & $L_\ba\cap L_{\ba'}=M$ 
& $12,12$\\ \hline\hline
& $\cS_3$ & (V-7) & $[18,3]$ & $\C_3\times \cS_3$ & $L_\ba\cap L_{\ba'}=M$ & $18,6$\\ \cline{2-7}
$\C_3$& \raisebox{-1.6ex}[0cm][0cm]{$\C_3$} & (V-8) & $[9,2]$ & $\C_3\times \C_3$ 
& $L_\ba\cap L_{\ba'}=M$ & $9,9,3,3$\\ \cline{3-7} 
& & (V-9) & $[3,1]$ & $\C_3$ & $L_\ba=L_{\ba'}$ & $3^7,1^3$\\ \cline{1-7}
\end{tabular}
}\vspace*{5mm}
\end{center}
For example, if we take $\ba=(1,-1)$, $\ba'=(-1,1)$ and $M=\mathbb{Q}$ then we have 
$f_\ba^{\cS_4}(X)=(X-1)(X^3+X^2+2X+1)$ and $f_{\ba'}^{\cS_4}(X)=(X+1)(X^3-X^2+1)$. 
We see that $\Spl_\mathbb{Q} f_\ba^{\cS_4}(X)=\Spl_\mathbb{Q} f_{\ba'}^{\cS_4}(X)$ because 
$\mathrm{DT}(\R_{\ba,\ba'})$ is given by $6^2, 3^3, 2, 1$.


{\small 
\vspace*{1mm}
\begin{tabular}{ll}
Akinari HOSHI & Katsuya MIYAKE\\ 
Department of Mathematics & Department of Mathematics\\ 
Rikkyo University & School of Fundamental Science and Engineering\\ 
3--34--1 Nishi-Ikebukuro Toshima-ku & Waseda University\\
Tokyo, 171--8501, Japan  & 3--4--1 Ohkubo Shinjuku-ku\\
E-mail: \texttt{hoshi@rikkyo.ac.jp} & Tokyo, 169--8555, Japan\\
& E-mail: \texttt{miyakek@aoni.waseda.jp}
\end{tabular}
}

\end{document}